\numberwithin{equation}{section}
 \theoremstyle{plain}
\newtheorem{thm}{Theorem}[section]
\newtheorem{theorem}[thm]{Theorem}
\newtheorem{lemma}[thm]{Lemma}
\newtheorem{corollary}[thm]{Corollary}
\newtheorem{proposition}[thm]{Proposition}
\theoremstyle{definition}
\newtheorem{remark}[thm]{Remark}
\newtheorem{definition}[thm]{Definition}
\newtheorem{claim}[thm]{Claim}
\newtheorem{example}[thm]{Example}
\newtheorem{defn-thm}[thm]{Definition-Theorem}
\renewcommand{\H}{{\mathbb H}}
\newcommand{\tr}{{ tr}}
\newcommand{\bp}{\bar{\partial}}
\newcommand{\Om}{\Omega}
\newcommand{\btheorem}{\begin{theorem}}
\newcommand{\etheorem}{\end{theorem}}
\newcommand{\bproposition}{\begin{proposition}}
\newcommand{\eproposition}{\end{proposition}}
\newcommand{\bdefinition}{\begin{definition}}
\newcommand{\edefinition}{\end{definition}}
\newcommand{\bcorollary}{\begin{corollary}}
\newcommand{\ecorollary}{\end{corollary}}
\newcommand{\bproof}{\begin{proof}}
\newcommand{\eproof}{\end{proof}}
\newcommand{\bremark}{\begin{remark}}
\newcommand{\eremark}{\end{remark}}
\newcommand{\eexample}{\end{example}}
\newcommand{\bexample}{\begin{example}}
\newcommand{\elemma}{\end{lemma}}
\newcommand{\blemma}{\begin{lemma}}
\newcommand{\suml}{\sum\limits}
\newcommand{\p}{\partial}
\renewcommand{\bar}{\overline}
\renewcommand{\phi}{\varphi}
\newcommand{\ee}{\end{eqnarray*}}
\newcommand{\be}{\begin{eqnarray*}}
\newcommand{\beq}{\begin{equation}}
\newcommand{\eeq}{\end{equation}}
\newcommand{\bd}{\begin{enumerate} [(1)]}
\newcommand{\ed}{\end{enumerate}}
\renewcommand{\tilde}{\widetilde}
\renewcommand{\bf}{\textbf}
\renewcommand{\it}{\textit}
\renewcommand{\bf}{\textbf}
\begin{document}
\title{Quantum Correction and the Moduli Spaces of Calabi-Yau Manifolds}
\author{Kefeng Liu, Changyong Yin}

\AtEndDocument{\bigskip{\footnotesize%
\textsc{Center of Mathematical Sciences, Zhejiang University, Hangzhou, Zhejiang 310027, China;
        Department of Mathematics,University of California at Los Angeles,
        Los Angeles, CA 90095-1555, USA} \par
\textit{E-mail address:} \texttt{liu@math.ucla.edu, liu@cms.zju.edu.cn} \par
~\\~
\noindent \textsc{Department of Mathematics,University of California at Los Angeles,
        Los Angeles, CA 90095-1555, USA} \par
\textit{E-mail address:} \texttt{changyong@math.ucla.edu}
}}

\date{}
 \maketitle
\begin{abstract}
\noindent We define the quantum correction of the Teichm\"uller space $\mathcal{T}$ of Calabi-Yau manifolds.
Under the assumption of no weak quantum correction, we prove that the Teichm\"uller space $\mathcal{T}$ is a locally
symmetric space with the Weil-Petersson metric. For Calabi-Yau threefolds, we show that no strong quantum correction
is equivalent to that, with the Hodge metric, the image $ \Phi(\mathcal{T})$ of the Teichm\"uller space $\mathcal{T}$ 
under the period map $\Phi$ is an open submanifold of
a globally Hermitian symmetric space $W$ of the same dimension as $\mathcal{T}$. Finally, for Hyperk\"ahler manifold of dimension 
$2n \geq 4$, we find both locally and globally defined families of $(2,0)$ and $(2n,0)$-classes over the Teichm\"uller space
of polarized Hyperk\"ahler manifolds. 
\end{abstract}

\tableofcontents
\section{Introduction}

In this paper we study the Teichm\"uller space of Calabi-Yau and Hyperk\"ahler manifolds.
Recall that a compact projective manifold $X$ of complex dimension $n$ with $n
\geq 3$ is called a Calabi-Yau manifold, if it has a trivial
canonical bundle and satisfies $H^i(X, \mathcal{O}_X)=0$ for $0 < i < n$.
A compact and simply connected projective manifold of complex dimension $2n\geq4$
is called Hyperk\"ahler, if it admits a nondegenerate holomorphic $(2,0)$-form.

\noindent In the first part, we study Calabi-Yau manifolds.
A polarized and marked Calabi-Yau manifold
is a triple $(X, L, \gamma)$ of a Calabi-Yau manifold $X$, an ample line bundle $L$ over $X$ and
a basis $\gamma$ of the integral middle homology group modulo torsion, $H_n(X, \mathbb{Z})/ \text{Tor}$.
We will denote the moduli space of a polarized and marked Calabi-Yau manifold $X$ by $\mathcal{T}$.
Actually, the Teichm\"uller
space $\mathcal{T}$ is precisely the universal cover of the smooth moduli space $\mathcal{Z}_m$ of polarized
Calabi-Yau manifolds with level $m $ structure with $m \geq 3$, which is constructed by Popp, Viehweg and Szendr\"oi,
for example in \cite[Section 2]{S}. Here, A basis of the quotient space
$(H_n(X,\mathbb{Z})/\text{Tor})/m(H_n(X,\mathbb{Z})/\text{Tor})$ is called a level $m$
structure with $m \geq 3$ on the polarized Calabi--Yau manifold $(X,L)$.
The versal family $\mathcal{X} \rightarrow \mathcal{T}$ of
polarized and marked Calabi-Yau manifolds is the pull-back of the versal family $\mathcal{X}_{\mathcal{Z}_m}
\rightarrow \mathcal{Z}_m$, see \cite{S}. Therefore, $\mathcal{T}$  is a connected and
simply connected smooth complex manifold.

\noindent  There has been very active studies about when a horizontal subvariety of a Griffiths period domain is a globally Herimitian
symmmetric space and to classify the possible variations of Hodge structure parameterized by a globally
Hermitian symmetric space. The reader can refer \cite{FL} and its reference.
In this paper, we will  characterize a class of Calabi-Yau manifolds whose
Teichm\"uller spaces are locally Hermitian symmetric spaces or
globally Herimitian symmetric spaces, by studying the quantum correction as motivated by mirror symmetry from String Theory.

\noindent Fix $p \in \mathcal{T}$, let $X$ be the corresponding Calabi-Yau manifold in the versal family and
 $\{\varphi_1$, $\cdots$, $\varphi_N \} \in \mathbb{H}^{0,1}(X, T^{1,0}X)$ be an orthonormal basis of harmonic Beltrami differentials with respect to the
Calabi-Yau metric. Then we can construct a smooth family of Betrami differentials
\begin{eqnarray*}
\Phi(t)=\sum_{|I|\geq 1} t^I \varphi_I,
\end{eqnarray*}
which describe the deformation of complex structures around $p \in \mathcal{T}$.
Our essential idea is to consider the strong quantum correction
at point $p \in \mathcal{T}$, which can be simply described as the following identity of cohomology classes,
\begin{eqnarray*}
[\Xi(t)]= [\Omega^c(t)]-[\exp(\sum_{i=1}^N t_i \phi_i)\lrcorner \Omega],
\end{eqnarray*}
where $\Omega$ is a holomorphic $(n,0)$-form over $X$ and $\Omega^c(t)$ is the
canonical family of holomorphic $(n,0)$-forms in a neighborhood of $p \in \mathcal{T}$.
And the weak quantum correction at point $p \in \mathcal{T}$ is defined as
the lowest order expansion of $[\Xi(t)]$ with respect to $T$, i.e.,
\begin{eqnarray*}
[\Xi(t)]_1= \sum_{i,j,k=1}^N t_i t_j t_k [\phi_i \lrcorner \phi_{jk} \lrcorner \Omega].
\end{eqnarray*}
\noindent  When $n=3$, i.e.,
for Calabi-Yau threefolds, no strong quantum correction implies
no quantum correction of the Yukawa coupling
in physics literatures, the reader can refer to \cite{BCOV} for details about
quantum correction of the Yukawa coupling.

\noindent We first prove the following result which characterizes the Teichm\"uller space $\mathcal{T}$
of Calabi-Yau manifolds without quantum correction,
\begin{theorem}
Let $\mathcal{T}$ be the Teichm\"uller space of polarized and marked Calabi-Yau manifolds.
If there is no weak quantum correction
at any point $p \in \mathcal{T}$, i.e., $[\Xi(t)]_1=0$,
 then $\mathcal{T}$
is a locally Hermitian symmetric space with the Weil-Petersson metric.
\end{theorem}

\noindent Here, locally Hermitian symmetric property is equivalent to $\nabla R=0$,
which is not necessarily complete. Moreover, for polarized and marked Calabi-Yau threefolds, we found the
following equivalent condition for no strong quantum correction
at any point $p \in \mathcal{T}$.

\begin{theorem}
Let $\mathcal{T}$ be the Teichm\"uller space of polarized and marked Calabi-Yau threefolds
and $\Phi: \mathcal{T} \rightarrow D$ be the period map.
Then the following are equivalent:
\begin{enumerate}
\item $\mathcal{T}$ has no strong quantum correction at any point $p \in \mathcal{T}$;
\item With respect to the Hodge metric,
the image $ \Phi(\mathcal{T})$ is an open submanifold of
a globally Hermitian symmetric space $W$ of the same dimension as $\mathcal{T}$,
which is also a totally geodesic submanifold of
the period domain $D$.
\end{enumerate}
\end{theorem}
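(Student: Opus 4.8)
The plan is to convert the condition $[\Xi(t)]=0$ into an explicit orbit description of the period map and then to match that orbit with the Harish--Chandra realization of a Hermitian symmetric space; the two implications become, respectively, the construction and the reading-off of this orbit. To set up the Lie-theoretic picture, let $\mathfrak g=\mathfrak{sp}(H^3(X,\R))$ carry its Hodge decomposition $\mathfrak g=\bigoplus_k\mathfrak g^{-k,k}$, where $\mathfrak g^{-k,k}$ sends $H^{p,q}$ into $H^{p-k,q+k}$, so the horizontal directions form $\mathfrak g^{-1,1}$. The contraction map $\rho\colon \mathbb H^{0,1}(X,T^{1,0}X)\to\mathfrak g^{-1,1}$, $\rho(\phi)\colon\psi\mapsto\phi\lrcorner\psi$, identifies $T_p\mathcal T$ with a subspace $\mathfrak a:=\rho(\mathbb H^{0,1}(X,T^{1,0}X))\subseteq\mathfrak g^{-1,1}$, and a direct check on each Hodge piece shows the operators $\rho(\phi_i)$ commute, so $\mathfrak a$ is abelian. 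In this language $[\Xi(t)]=0$ says precisely that $[\Omega^c(t)]=\exp(\rho(\phi_t))[\Omega]$, where $\phi_t=\sum_i t_i\phi_i$.

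For (1)$\Rightarrow$(2) I differentiate this identity: since the $\rho(\phi_i)$ commute, $\partial_i[\Omega^c(t)]=\exp(\rho(\phi_t))\,\rho(\phi_i)[\Omega]$, and inductively every stage of the Hodge filtration satisfies $F^\bullet(t)=\exp(\rho(\phi_t))\,F^\bullet(p)$, so the period map is the orbit map $\Phi(t)=\exp(\rho(\phi_t))\cdot\Phi(p)$ of the abelian group $\exp(\mathfrak a)$. Because no strong quantum correction implies no weak quantum correction, the first theorem gives $\nabla R=0$ for the Weil--Petersson metric; through the special-geometry curvature relation this is equivalent to the Yukawa cubic being parallel, $\nabla Y=0$, which is exactly the Jordan/Lie-triple closure $[[\mathfrak a,\overline{\mathfrak a}],\mathfrak a]\subseteq\mathfrak a$. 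Hence $\mathfrak g_W:=\mathfrak a\oplus[\mathfrak a,\overline{\mathfrak a}]\oplus\overline{\mathfrak a}$ is a Hermitian symmetric subalgebra with $\mathfrak a=\mathfrak p^+$; its real orbit $W=G_W\cdot\Phi(p)$ is a globally Hermitian symmetric space, totally geodesic in $(D,\text{Hodge})$, and the abelian orbit above is precisely its Harish--Chandra coordinate chart, so $\Phi(\mathcal T)$ is an open submanifold of $W$ of dimension $N=\dim\mathcal T$.

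For (2)$\Rightarrow$(1) I reverse this: if $\Phi(\mathcal T)$ is open in a totally geodesic Hermitian symmetric $W\subseteq D$ of dimension $N$, write $W=G_W/K_W$ with holomorphic tangent space $\mathfrak p^+$. Being totally geodesic and horizontal forces $\mathfrak p^+=\mathfrak a$, and in Harish--Chandra coordinates the period map is again the orbit map $\Phi(t)=\exp(\rho(\phi_t))\cdot\Phi(p)$; reading off the $F^3$-line returns $[\Omega^c(t)]=\exp(\rho(\phi_t))[\Omega]$, that is $[\Xi(t)]=0$.

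The hard part will be the two bridges in the forward direction. The first is the chain $\nabla R=0\Leftrightarrow\nabla Y=0\Leftrightarrow\mathfrak a$ being a Lie triple system: here one must invoke the special-geometry identity expressing the Weil--Petersson curvature through $Y$, and note carefully that a nonzero Yukawa cubic by itself does \emph{not} obstruct the symmetric property---only a nonparallel $Y$ does---so the argument has to isolate $\nabla Y$ and match it with the Jordan closure for the \emph{specific} abelian subspace $\mathfrak a=\rho(H^{2,1})$ rather than for an arbitrary abelian subspace of $\mathfrak g^{-1,1}$. The second is verifying that this abelian horizontal orbit is genuinely totally geodesic for the Hodge metric and fills an open subset of $W$; this rests on the Harish--Chandra theory for $\mathfrak p^+$ together with the fact that a symmetric orbit $W=G_W\cdot\Phi(p)$ is totally geodesic in $D$. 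Throughout, the essential subtlety is that $[\Xi(t)]=0$ is an all-orders condition, so one must control the full correction series and not merely its leading (weak) term.
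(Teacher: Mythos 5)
Your translation of $[\Xi(t)]=0$ into the orbit identity $\Phi(t)=\exp(\rho(\phi_t))\cdot\Phi(p)$ is essentially the paper's own mechanism: the paper carries it out with the explicit $N_+$-matrix $\sigma(t)$, using derivatives of $[\Omega^c(t)]$ plus Griffiths transversality to fill in the second row and the relation $\sigma(t)\in G_{\mathbb{C}}$ (preservation of $Q$) to pin down the last column, which is what your phrase ``inductively every stage of the Hodge filtration'' must secretly do; likewise your (2)$\Rightarrow$(1) matches the paper's comparison of the first adapted basis vector with $[\Omega^c(t)]$ via $\dim H^{3,0}(X_q)=1$ and the $H^{3,0}$- and $H^{2,1}$-projections. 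The genuine gap is the central bridge of your forward direction, the chain $\nabla R=0\Leftrightarrow\nabla Y=0\Leftrightarrow[[\mathfrak{a},\overline{\mathfrak{a}}],\mathfrak{a}]\subseteq\mathfrak{a}$, which you never prove and yourself flag as ``the hard part.'' Worse, the first link is doubtful as stated: by the paper's curvature formulas, $\nabla_r R_{i\bar{j}k\bar{l}}(0)=q_{ikr,\overline{jl}}$ pairs the symmetrized correction term against the classes $\varphi_j\lrcorner\varphi_l\lrcorner\Omega$, i.e.\ $\nabla R$ only sees $\nabla C$ contracted against $\overline{C}$, and since those classes need not span $H^{1,2}$ one cannot conclude $\nabla Y=0$ from $\nabla R=0$ without further argument. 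The whole special-geometry detour is also unnecessary: the paper never invokes the Weil--Petersson curvature theorem in this proof. Once the abelian subalgebra $\mathfrak{a}=\mathrm{span}_{\mathbb{C}}\{E_1,\dots,E_N\}$ is in hand (citing \cite[Lemma 5.5.1]{CMP}), it defines $W=\exp(\sum_{i=1}^N\tau_iE_i)\cap D$ directly, with the bounded realization $\beth$ giving the global coordinate; the totally geodesic and symmetric structure comes with this construction, not from $\nabla R=0$.

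The second genuine gap is globalization. Your orbit identity is purely local --- it lives on a chart $U_p$ where the flat affine coordinates and the canonical family $\Omega^c(t)$ exist --- so at best you obtain $\Phi(U_p)\subset W$, while condition (2) concerns the full image $\Phi(\mathcal{T})$. The paper closes this with an open--closed argument that your proposal omits entirely: for any $q$ with $\Phi(q)\in\Phi(\mathcal{T})\cap W$, the hypothesis of no strong quantum correction \emph{at the point} $q$ (this is where one genuinely needs the assumption at every point, not just at one base point) reruns the local argument to show $\Phi(\mathcal{T})\cap W$ is open in $\Phi(\mathcal{T})$, using the local Torelli theorem and $\dim W=\dim\mathcal{T}$; closedness of $W=\exp(\sum_{i=1}^N\tau_iE_i)\cap D$ in $D$ makes $\Phi(\mathcal{T})\cap W$ closed in $\Phi(\mathcal{T})$; connectedness of $\mathcal{T}$ then forces $\Phi(\mathcal{T})\subset W$. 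Until you either prove the Jordan-closure bridge (isolating $\nabla Y$ for the specific $\mathfrak{a}=\rho(\mathbb{H}^{0,1}(X,T^{1,0}X))$, as you anticipate) or bypass it as the paper does, and supply this patching argument, the proposal remains an outline whose two load-bearing steps are missing.
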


\noindent In the second part, we study Hyperk\"ahler manifolds.
Let $\mathcal{T}$ be the Teichm\"uller space of polarized Hyperk\"ahler manifolds.
By explicitly computing the Taylor expansions of the canonical families of  $(2,0)$
and $(2n,0)$-classes at any point $p \in \mathcal{T}$, we show that they have
no strong quantum correction at any point $p \in \mathcal{T}$. Therefore the Teichm\"uller spaces of polarized Hyperk\"ahler manifolds are locally Hermitian symmetric with the Weil-Petersson metric, which is proved without using the local Torelli theorem for Hyperk\"ahler manifolds. We then show that these local expansions are actually global expansions on the Teichm\"uller spaces. More precisely,
let $X$ be a Hyperk\"ahler manifold with $\dim_{\mathbb{C}} X = 2n$ and $\Omega^{2,0}$ be a nowhere
vanishing $(2,0)$-form over $X$, then we can actually construct a globally defined families of $(2,0)$ and $(2n,0)$-classes
over the Teichm\"uller space $\mathcal{T}$.
\begin{theorem}
Fix $p \in \mathcal{T}$, let $X$ be the corresponding Hyperk\"ahler manifold in the versal family and $\Omega^{2,0}$ be a nowhere
vanishing $(2,0)$-form over $X$,
then, in a neighborhood $U$ of $p$, there exists the local families of $(2,0)$ and $(2n,0)$-classes defined
by the canonical families $[\mathbb{H}(e^{\Phi(t)}\lrcorner \Omega^{2,0})]$
and $[e^{\Phi(t)}\lrcorner \wedge^n \Omega^{2,0}]$.
Furthermore their expansions are actually globally defined over the Teichm\"uller space $\mathcal{T}$, i.e.,
\begin{eqnarray*}
&&[\Omega^{2,0}] +
{\displaystyle\sum\limits_{i=1}^{N}}
 [\phi_{i}\lrcorner\Omega^{2,0}] t_{i}+\frac{1}{2}%
{\displaystyle\sum\limits_{i=1}^{N}}
[  \phi_{i}\lrcorner\phi_{j}\lrcorner\Omega
^{2,0}   ] t_{i}t_{j} \in H^{2,0}(X_t),\\
&&\left[  \wedge^n \Omega^{2,0}\right] +
{\displaystyle\sum\limits_{i=1}^{N}}
\left[  \phi_{i}\lrcorner  \wedge^n \Omega^{2,0}\right] t_{i}\\
&&+\frac{1}{k!}
{\displaystyle\sum\limits_{k=1}^{2n}}
\left({\displaystyle\sum\limits_{1\leq i_{1}\leq...\leq i_{k}\leq N}}
[  \phi_{i_{1}}\lrcorner...\lrcorner\phi_{i_{k}}\lrcorner \wedge^n\Omega^{2,0}
]  t_{i_1} t_{i_2} \cdots t_{i_k} \right) \in H^{2n,0}(X_t),
\end{eqnarray*}
are globally defined over $\mathcal{T}$.
\end{theorem}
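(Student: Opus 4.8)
The plan is to reduce the whole statement to one structural fact: because $\Omega^{2,0}$ has Hodge type $(2,0)$, the contraction operator $\phi\lrcorner$ is an \emph{even} derivation of the exterior algebra (it is an interior product followed by a wedge), so $e^{\Phi(t)}\lrcorner$ is an algebra automorphism, and the series $e^{\sum_i t_i\phi_i}\lrcorner\Omega^{2,0}$ \emph{terminates}: contracting a $(2,0)$-form three or more times vanishes by bidegree. Writing $\psi_i=\phi_i\lrcorner\Omega^{2,0}$ (type $(1,1)$) and $\eta_{ij}=\phi_i\lrcorner\phi_j\lrcorner\Omega^{2,0}$ (type $(0,2)$), one gets the polynomial identity of forms $e^{\sum_i t_i\phi_i}\lrcorner\Omega^{2,0}=\Omega^{2,0}+\sum_i t_i\psi_i+\tfrac12\sum_{i,j}t_it_j\eta_{ij}$. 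First I would record that $\psi_i$ and $\eta_{ij}$ are $d$-closed, indeed harmonic: $\bar\partial$-closedness follows from $\bar\partial\phi_i=0$ and $\partial\Omega^{2,0}=\bar\partial\Omega^{2,0}=0$, while harmonicity follows because $\Omega^{2,0}$ is parallel for the hyperk\"ahler metric and the $\phi_i$ are harmonic, so contraction preserves harmonicity. Thus $[\psi_i]\in H^{1,1}(X)$ and $[\eta_{ij}]\in H^{0,2}(X)$ are well defined.

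Second, for the \emph{local} families I would invoke the no--strong--quantum--correction computation for hyperk\"ahler manifolds: the class of the canonical family $\mathbb{H}(e^{\Phi(t)}\lrcorner\Omega^{2,0})$ equals that of $e^{\sum_i t_i\phi_i}\lrcorner\Omega^{2,0}$, since the higher Beltrami terms $\varphi_I$ with $|I|\ge 2$ contribute only $\bar\partial$-exact forms. Combined with the termination above, this yields near $p$ exactly the stated quadratic expansion as an element of $H^{2,0}(X_t)$. The $(2n,0)$-family is then immediate from the homomorphism property: $e^{\Phi(t)}\lrcorner\wedge^n\Omega^{2,0}=\bigl(e^{\Phi(t)}\lrcorner\Omega^{2,0}\bigr)^{\wedge n}$, whose class is the $n$-fold cup power of the $(2,0)$-class; expanding this power reproduces the displayed sum over $k=1,\dots,2n$ with factors $\tfrac1{k!}\,\phi_{i_1}\lrcorner\cdots\lrcorner\phi_{i_k}\lrcorner\wedge^n\Omega^{2,0}$, terminating at $k=2n$ by the same bidegree reason.

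Third, the global statement is won by passing to fixed cohomology. As $\mathcal{T}$ is simply connected, the flat Gauss--Manin trivialization gives $H^2(X_t,\C)\cong H^2(X,\C)$, so $[\psi_i],[\eta_{ij}]$ become fixed vectors and $v(t):=[\Omega^{2,0}]+\sum_i t_i[\psi_i]+\tfrac12\sum_{i,j}t_it_j[\eta_{ij}]$ is a globally defined polynomial curve in $H^2(X,\C)$. Let $q$ be the (topological) Beauville--Bogomolov--Fujiki form, for which the period domain is the quadric $D=\{[\sigma]:q(\sigma,\sigma)=0,\ q(\sigma,\bar\sigma)>0\}$. Evaluating the Hodge--Riemann relation $q\bigl([\Omega^{2,0}(X_t)],[\Omega^{2,0}(X_t)]\bigr)=0$ on the local expansion and collecting the $t_it_j$-coefficient forces the identity $q([\psi_i],[\psi_j])+q([\Omega^{2,0}],[\eta_{ij}])=0$, which I call $(\star)$; the orders $0$ and $1$ drop out because $q(\Omega^{2,0},\Omega^{2,0})=0$ and $q(\Omega^{2,0},\psi_i)=0$. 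By $(\star)$ the polynomial identity $q(v(t),v(t))\equiv 0$ holds for \emph{all} $t$. But $w\mapsto[\Omega^{2,0}+w-\tfrac{q(w,w)}{2q(\Omega^{2,0},\bar\Omega^{2,0})}\bar\Omega^{2,0}]$, $w\in H^{1,1}$, is precisely the standard global affine parameterization of $D$, and under $w=\sum_i t_i[\psi_i]$ it coincides termwise with $v(t)$ exactly by $(\star)$. Hence $v(t)$ is the global parameterization of an affine chart of $D$, defined for every $t$.

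Finally I would match $v(t)$ with the period map on all of $\mathcal{T}$. Since $h^{2,0}=1$, the Hodge line $F^2=H^{2,0}(X_t)$ is a global holomorphic sub-line-bundle of the flat bundle $\mathcal{H}^2$ (no local Torelli is needed here), giving a holomorphic map $\Phi:\mathcal{T}\to\P(H^2(X,\C))$ into $D$ that agrees with $t\mapsto[v(t)]$ near $p$ by the local step. As $\mathcal{T}$ is connected and both are holomorphic maps into $\P(H^2(X,\C))$, the identity theorem forces $\Phi(t)=[v(t)]$ everywhere, so the quadratic expansion computes $H^{2,0}(X_t)$ over all of $\mathcal{T}$, and its $n$-th cup power computes $H^{2n,0}(X_t)$. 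I expect the main obstacle to be exactly this globalization: verifying that the locally derived expansion is the genuine global affine chart of the quadric (the content of $(\star)$ and the resulting polynomial identity $q(v,v)\equiv0$) and that no monodromy obstructs analytic continuation, which is where simple-connectivity of $\mathcal{T}$ and the finiteness of the contraction series are indispensable.
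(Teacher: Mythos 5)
Your local step and your identification of the target quadric chart are essentially the paper's own scaffolding: the termination of $e^{\sum_i t_i\phi_i}\lrcorner\Omega^{2,0}$ by bidegree, harmonicity of $\phi_i\lrcorner\Omega^{2,0}$ via Bochner parallelism, and your relation $(\star)$ are all present (the paper encodes $(\star)$ as the normalization $[\phi_i\lrcorner\phi_j\lrcorner\Omega^{2,0}]=\delta_{ij}[\overline{\Omega^{2,0}}]$, and your ``standard affine parameterization of the quadric'' is exactly its Harish--Chandra realization $\rho(\tau)=\exp(\sum_i\tau_iE_i)$). One caution on the local part: the higher Beltrami contributions are not disposed of by saying they are ``$\bar\partial$-exact.'' The terms $\phi_I\lrcorner\Omega^{2,0}$, $|I|\geq2$, are shown to be $\partial$-exact only after constructing the dual bivector $\Omega^{*2,0}$ with $\nabla\Omega^{*2,0}=0$, and the $(0,2)$-components $\phi_J\lrcorner\phi_K\lrcorner\Omega^{2,0}$ with $|J|\geq2$ require a separate argument: their harmonic projection is a priori $c\cdot\overline{\Omega^{2,0}}$, and $c=0$ is forced by a Stokes computation pairing against $\wedge^{n-1}\Omega^{2,0}\wedge\wedge^{n-1}\overline{\Omega^{2,0}}$. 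Citing the local expansion theorem wholesale, as you do, is acceptable since the paper's proof of this statement does the same; just be aware that this is where the real analytic work sits.

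The genuine gap is in your globalization. The identity-theorem step is ill-posed: $\Phi$ is a map on $\mathcal{T}$, while $[v(\cdot)]$ is a map on $\mathbb{C}^N$, and the assertion ``$\Phi(t)=[v(t)]$ everywhere'' presupposes a globally defined function $t$ on $\mathcal{T}$ --- which is precisely what must be constructed; two holomorphic maps with different domains cannot be compared by analytic continuation. The correct route, and the one the paper takes, is to define $\tau:=\rho^{-1}\circ\Phi$ globally: this is legitimate because the Harish--Chandra embedding shows all of $D$ (not just a neighborhood of the base point) lies in the affine chart, with $\rho:\beth\to D$ biholomorphic onto the bounded domain $\beth=\{1-\sum_i|\tau_i|^2+\tfrac14|\sum_i\tau_i^2|>0\}$. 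But for $\tau$ to be a global \emph{coordinate} on $\mathcal{T}$ --- which is what ``the flat affine coordinate $t$ is globally defined'' means --- one needs injectivity of $\Phi$, i.e.\ the global Torelli theorem for hyperk\"ahler manifolds (Verbitsky; Chen--Guan--Liu), which your proposal never invokes and which cannot be replaced by simple-connectivity of $\mathcal{T}$. Finally, the identification $t=\tau$ on $U_p$ is not automatic from ``agreement near $p$'': the paper pins it down by normalizing both period vectors to have $H^{2,0}(X)$-component $[\Omega^{2,0}]$ (the $\lambda=1$ argument using $h^{2,0}=1$) and then comparing $H^{1,1}$-components in the basis $\{[\eta_i]\}$. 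With these two repairs --- global Torelli for injectivity, and the normalized comparison in place of the identity theorem --- your outline closes up into the paper's proof.
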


\noindent This paper is organized as follows. In Section \ref{local}, we briefly
review the construction of the Teichm\"uller space of polarized and marked Calabi-Yau manifolds,
the local deformation theory of Calabi-Yau manifolds and the construction of the canonical family of $(n,0)$-forms.
In Section \ref{symmetric}, the definition and criteria of Hermitian symmetric space are introduced.
Also, we define the quantum correction of the Teichm\"uller space $\mathcal{T}$, which
originally comes from physics literatures.
In Section \ref{qc and wpmetric}, we review the Weil-Petersson metric over the Teichm\"uller space $\mathcal{T}$,
and find a local formula for the covariant derivatives of the curvature tensor $\nabla_r R$ and $\nabla_{\overline{r}} R$ in the flat affine coordinate $t$.
Under the assumption of no weak quantum correction at any point $p \in \mathcal{T}$,
we prove that the Teichm\"uller space $\mathcal{T}$ is a locally Hermitian symmetric space with the
Weil-Petersson metric by computing the local formulas of $\nabla_r R$ and $\nabla_{\overline{r}} R$.
We remark that the results in Sections \ref{local deformation} to \ref{qc and wpmetric} actually
all hold for both Calabi-Yau and Hyperk\"ahler manifolds.
In Section \ref{qc and hodge}, for Calabi-Yau threefolds, we show that
no strong quantum correction is equivalent to that
the image $\Phi(\mathcal{T})$ of the Teichm\"uller space $\mathcal{T}$ under the period map is an open submanifold of
a globally Hermitian symmetric space with the same dimension as $\mathcal{T}$.
In Section \ref{hkmanifold}, we construct a globally defined families of $(2,0)$ and $(2n,0)$-classes
over the Teichm\"uller space $\mathcal{T}$ of polarized Hyperk\"ahler manifolds with $\dim_{\mathbb{C}} X=2n$.
\\
\\
\noindent \textbf{Acknowledgement}:
The authors would like to thank Professors Xiaofeng Sun and Andrey Todorov for useful discussions on some topics in this paper.

\section{Locally Geometric Structure of the Moduli Space}\label{local}
In Section \ref{construction}, we review the construction of
the Teichm\"ulller space of polarized and marked Calabi--Yau manifolds based on the works of
Popp \cite{P}, Viehweg \cite{V} and Szendr\"oi \cite{S}.
In Section \ref{local deformation}
and Section \ref{canonicalfamily}, the smooth family of Betrami
differentials $\Phi(t)$ and the canonical family of $(n,0)$-forms over the
deformation space of Calabi-Yau manifolds was introduced.
The results in Section \ref{local deformation} and Section  \ref{canonicalfamily}
also hold for polarized Hyperk\"ahler manifolds.

\subsection{The Construction of the Teichm\"uller Space}\label{construction}

In this section, we briefly review the construction of the Teichm\"uller space
of polarized and marked Calabi-Yau manifolds and its basic properties.
For the concept of Kuranishi family of compact complex manifolds,
we refer to \cite[Pages $8$-$10$]{SU}, \cite[Page 94]{P} or \cite[Page 19]{V}
for equivalent definitions and more details.
If a complex analytic family
$\pi: \mathcal{X}\rightarrow S$ of compact complex manifolds is complete at each point of $S$ and versal at the point
$0 \in S$, then the family $\pi: \mathcal{X}\rightarrow S$ is called the Kuranishi family of the complex maniflod
$X=\pi^{-1}(0)$. The base space $S$ is called the Kuranishi space. If the family is complete at each point of a
neighbourhood of $0 \in S$ and versal at $0$, then this family is called a local Kuranishi family at
$0 \in S$. In particular, by definition, if the family is versal at each point of $S$, then it is local Kuranishi
at each point of $S$.

\noindent A polarized and marked Calabi-Yau manifold
is a triple of a Calabi-Yau manifold $X$, and an ample line bundle $L$ over $X$ and
a basis of the integral middle homology group modulo torsion, $H_n(X, \mathbb{Z})/ \text{Tor}$.
A basis of the quotient space
$(H_n(X,\mathbb{Z})/\text{Tor})/m(H_n(X,\mathbb{Z})/\text{Tor})$ is called a level $m$
structure on the polarized Calabi--Yau manifold with $m\geq 3$.
For deformation of
a polarized Calabi-Yau manifold with level $m$ structure, we have the following theorem, which is a reformulation
of \cite[Theorem 2.2]{S}. One can also refer \cite{P} and
\cite{V} for more details about the construction of the moduli space of Calabi-Yau manifolds.

\begin{thm}
Let $m \geq 3$ and $ (X, L)$ be a polarized Calabi-Yau manifold with level $m$ structure, then
there exists a quasi-projective complex manifold $Z_m$ with a versal family of Calabi-Yau
maniflods,

\begin{equation}\label{m-family}
\mathcal{X}_{Z_m} \longrightarrow Z_m,
\end{equation}
containing $X$ as a fiber, and polarized by an ample line bundle $\mathcal{L}_{Z_m}$ on the
versal family $\mathcal{X}_{Z_m}$.
\end{thm}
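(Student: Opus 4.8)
The statement is a reformulation of the construction of Popp, Viehweg and Szendr\"oi, so the plan is to assemble four standard ingredients: boundedness, the Hilbert scheme, the level structure as a rigidifying device, and Viehweg's quasi-projectivity theorem. First I would fix the Hilbert polynomial $P(\nu)=\chi(X,L^{\otimes\nu})$ of the pair $(X,L)$; since $K_X$ is trivial, $P$ depends only on the deformation type and the polarization degree. By Matsusaka's big theorem there is an integer $\nu_0$, depending only on $P$, such that $L'^{\otimes\nu_0}$ is very ample with vanishing higher cohomology for \emph{every} polarized Calabi--Yau manifold $(X',L')$ with this Hilbert polynomial. Each such $X'$ is then embedded as a nondegenerate subvariety of a fixed $\mathbb{P}^N$, $N=P(\nu_0)-1$. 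Let $\mathrm{Hilb}$ be the Hilbert scheme of subschemes of $\mathbb{P}^N$ with the resulting Hilbert polynomial, and let $H\subset\mathrm{Hilb}$ be the locally closed locus of smooth Calabi--Yau subvarieties on which $\mathcal{O}(1)$ restricts to the $\nu_0$-th power of a polarization. Using the normal bundle sequence together with the Bogomolov--Tian--Todorov unobstructedness theorem (so that $H^0(N_{X'/\mathbb{P}^N})\to H^1(T_{X'})$ is surjective), I would check that $H$ is smooth and that the tautological family $\mathcal{X}_H\to H$ is complete at every point.

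The group $\PGL(N+1)$ acts on $H$, and two points lie in the same orbit exactly when the corresponding polarized manifolds are isomorphic. I would then form the finite \'etale cover $H_m\to H$ whose points add a level-$m$ structure, i.e. a basis of $(H_n(X',\mathbb{Z})/\mathrm{Tor})\otimes\mathbb{Z}/m\mathbb{Z}$, and lift the $\PGL(N+1)$-action to $H_m$. The key point is that for $m\geq3$ this action is \emph{free}. Indeed, the stabilizer of a point is the group of $\sigma\in\Aut(X',L')$ acting trivially on $H_n(X',\mathbb{Z}/m)$; since $H^0(X',T_{X'})\cong H^{n-1,0}(X')=0$ for a Calabi--Yau manifold, $\Aut(X',L')$ is a finite group, and on middle cohomology it acts faithfully, so a stabilizing $\sigma$ is a finite-order element of $\GL(H^n(X',\mathbb{Z}))$ congruent to the identity modulo $m$. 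By Serre's lemma such an element is the identity, whence $\sigma=\mathrm{id}$ and the $\PGL(N+1)$-stabilizer is trivial.

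With a reductive group acting freely on the smooth quasi-projective variety $H_m$, I would invoke Viehweg's theory of quasi-projective moduli of polarized manifolds with semi-ample canonical bundle --- here $K_{X'}$ is trivial, hence semi-ample --- to produce the geometric quotient $Z_m:=H_m/\PGL(N+1)$ as a quasi-projective scheme. Freeness of the action on a smooth space makes $Z_m$ a smooth quasi-projective complex manifold. Because $\mathcal{X}_{H_m}\to H_m$ is $\PGL(N+1)$-equivariant with trivial stabilizers, descent yields a family $\mathcal{X}_{Z_m}\to Z_m$ containing $X$ as a fiber, and the relatively ample class $\mathcal{O}(1)$ descends to an ample $\mathcal{L}_{Z_m}$. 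Finally, since the quotient is taken exactly along the directions of the embedding, completeness of $\mathcal{X}_H\to H$ at each point passes to an isomorphism of Kodaira--Spencer maps, so $\mathcal{X}_{Z_m}\to Z_m$ is versal at every point.

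I expect two genuinely hard inputs. The first is the quasi-projectivity of the quotient, which is the deep part and rests on Viehweg's positivity results for determinants of direct images together with Mumford's geometric invariant theory. The second, and the crux of the rigidification, is the faithfulness of $\Aut(X,L)$ on the middle integral cohomology $H^n(X,\mathbb{Z})$ for higher-dimensional Calabi--Yau manifolds: this is what upgrades a possible orbifold quotient to a genuine manifold and lets Serre's lemma take effect. I would isolate this faithfulness as a separate lemma, using that a stabilizing automorphism fixes both the holomorphic volume form $\Omega$ and the polarization class, and then reduce the remainder of the argument to Matsusaka boundedness, Bogomolov--Tian--Todorov unobstructedness, and the cited quasi-projectivity theorem.
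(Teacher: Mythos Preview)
The paper does not give its own proof of this theorem: it is stated as ``a reformulation of \cite[Theorem 2.2]{S}'' with further references to Popp \cite{P} and Viehweg \cite{V}, and is used as a black box. So there is no paper-proof to compare against; your proposal is effectively a sketch of what lies behind those references.

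As such a sketch, your outline is accurate and hits the right ingredients in the right order: Matsusaka boundedness to fix a projective model, the Hilbert scheme with the smooth Calabi--Yau locus, Bogomolov--Tian--Todorov for smoothness and completeness of the tautological family, the level-$m$ cover plus Serre's lemma to kill stabilizers, and Viehweg's positivity/GIT machinery for quasi-projectivity of the quotient. You are also right to flag the two genuinely nontrivial inputs. The quasi-projectivity is indeed the deep analytic/algebraic part. The faithfulness of $\Aut(X,L)$ on $H^n(X,\mathbb{Z})$ is the step that most needs care: for threefolds this is exactly what Szendr\H{o}i establishes, but in higher dimension it is not automatic from the definition of a Calabi--Yau manifold, and one must either prove it in the case at hand or put the level structure on enough of the cohomology to force faithfulness. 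Your instinct to isolate it as a separate lemma is the correct way to handle it.
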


\noindent  Define the Teichm\"uller space $\mathcal{T}_L(X)$ to be the universal
cover of the base space $Z_m$ of the versal family above,
\begin{equation*}
\pi: \mathcal{T}_L(X) \longrightarrow Z_m
\end{equation*}
and the family
\begin{equation*}
\mathcal{X} \longrightarrow \mathcal{T}_L(X)
\end{equation*}
to be the pull-back of the family (\ref{m-family}) by the projection $\pi$. For simplicity, we will denote
$\mathcal{T}_L(X)$ by $\mathcal{T}$. And the Teichm\"uller space has the following property:

\bproposition\label{simplyconnected}
The Teichm\"uller space $\mathcal{T}$ is a simply connected smooth complex manifold, and the family
\begin{equation}
\mathcal{X} \longrightarrow \mathcal{T}
\end{equation}
containing $X$ as a fiber, is local Kuranishi at each point of the Teichm\"uller space $\mathcal{T}$.
\eproposition

\noindent Note that the Teichm\"uller space $\mathcal{T}$ does not depend on the choice of level $m$.
In fact, let $m_1, m_2$ be two different positive integers, $\mathcal{X}_1 \rightarrow \mathcal{T}_1$ and $\mathcal{X}_2
\rightarrow \mathcal{T}_2$ are two versal families constructed via level $m_1$ and level $m_2$ respectively as
above, both of which contain $X$ as a fiber. By using the fact that $\mathcal{T}_1$ and $\mathcal{T}_2$ are
simply connected and the definition of versal families, we have a biholomorphic map $f: \mathcal{T}_1
\rightarrow \mathcal{T}_2$, such that the versal family $\mathcal{X}_1 \rightarrow \mathcal{T}_1$ is the pull-
back of the versal family $\mathcal{X}_2 \rightarrow \mathcal{T}_2$ by the map $f$. Thus these two families are
isomorphic to each other. One can check that the Teichm\"ller space defined above is precisely the moduli space of marked and polarized Calabi-Yau manifolds. See \cite{CGL13} for more details.

\subsection{Local Deformation of Calabi-Yau Manifolds}\label{local deformation}

Fix $p \in \mathcal{T}$, we denote the corresponding polarized Calabi-Yau manifold in the versal family
by $(X, L)$. Yau's solution of Calabi conjecture assigns a unique Calabi-Yau metric
$g$ on $X$, and its imaginary part $\omega= \mbox{Im} \,g \in L$ is the corresponding K\"ahler
form. Under the Calabi-Yau metric $g$, we have the following lemma which follows from the Calabi-Yau theorem directly,
 \blemma \label{isometry}
 Let $\Omega$ be a nowhere vanishing holomorphic $(n,0)$-form on $X$ such that
 \be
 (\frac{\sqrt{-1}}{2})^n (-1)^{\frac{n(n-1)}{2}} \Omega \wedge \overline{\Omega}=\omega^n.
 \ee
 Then the map $\iota : A^{0,1}(X, T^{1,0}X) \rightarrow A^{n-1,1}(X)$ given by
 $\iota(\varphi)=\varphi \lrcorner \Omega$ is an isometry with respect to the natural
 Hermitian inner product on both spaces induced by the Calabi-Yau metric $g$.
 Furthermore, the map $\iota$ preserves the Hodge decomposition.
 \elemma

\noindent Under the Calabi-Yau metric $g$, we have a precise description of the local deformation
of the polarized Calabi-Yau manifolds. By the Hodge theory, we have the following
identification

\be
T^{1,0}_p \mathcal{T} \cong \mathbb{H}^{0,1}(X, T^{1,0}X),
\ee

\noindent where $X$ is the corresponding fiber in the versal family $\mathcal{X} \rightarrow \mathcal{T}$.
By the Kodaira-Spencer-Kuranishi theory,
we have the following convergent power series expansion of the
Betrami differentials, which is now well-known as the Bogomolov-Tian-Todorov Theorem \cite{B78, Ti, To}.
\btheorem \label{BTT}
Let $X$ be a Calabi-Yau manifold and
 $\{\varphi_1$, $\cdots$, $\varphi_N \} \in \mathbb{H}^{0,1}(X, T^{1,0}X)$ be a basis.
 Then for any nontrivial holomorphic $(n,0)$-form $\Omega$ on $X$, we can construct a smooth
 power series of Betrami differentials as follows
\beq \label{betrami}
\Phi(t)=\sum_{|I|\geq 1} t^I \varphi_I =\sum_{\substack{\nu_{1}+\cdots +\nu_{N}\geq1,\\\text{each
$\nu_{i}\geq0, i=1,2,\cdots,N$}}}\varphi_{\nu
_{1}\cdots\nu_{N}}t^{\nu_{1}}_{1}\cdots t^{\nu_{N}}_{N}\in A^{0,1}%
(X,T^{1,0}_X),
\eeq
where $\varphi_{0\cdots\nu_{i}\cdots0}=\varphi_{i}$. This power
series has the following properties:

$1)$ $\overline{\partial}\Phi(t)=\frac{1}{2}[\Phi(t),\Phi(t)]$, the
integrability condition;

$2)$ $\overline{\partial}^{*}\varphi_{I}=0$ for each multi-index $I$
with $|I|\geq1$;

$3)$ $\varphi_{I}\lrcorner\Omega$ is $\partial$-exact for each
$I$ with $|I|\geq2$.

$4)$ it converges when $|t| < \epsilon$.
\etheorem

\noindent For the convergent radius, the reader can refer to \cite[Theorem 4.4]{LRY}.
 This theorem will be used to define the local flat affine coordinates $\{t_1, \cdots, t_N\}$
around the point $p \in \mathcal{T}$, for a given orthonormal basis $\{\varphi_1, \cdots, \varphi_N \}$
of $ \mathbb{H}^{0,1}(X, T^{1,0}X)$ with respect to the Calabi-Yau metric over $X$.

\subsection{Canonical Family of $(n,0)$-Classes}\label{canonicalfamily}
Based on the construction of the smooth family $\Phi(t)$ of Beltrami differentials in Theorem \ref{BTT}, we
can construct a canonical family of holomorphic
$(n,0)$-forms on the deformation spaces of Calabi-Yau manifolds. Here we just list the
results we need, the reader can refer \cite[Section 5.1]{LRY} for details.

\noindent Let $X$ be an $n$-dimensional Calabi-Yau
manifold and $\{\varphi_{1}, \cdots,
\varphi_{N}\}\in\H^{0,1}(X,T^{1,0}X)$ a basis  where  $N=\dim
\H^{0,1}(X,T^{1,0}X)$.  As constructed in Theorem \ref{BTT}, there
exists a smooth family of Beltrami differentials in the following
form
\[
\Phi(t)=\sum_{i=1}^{N}\varphi_{i}t_{i}+\sum_{|I|\geq2}\varphi_{I}t^{I}
=\sum_{\nu_{1}+\cdots+\nu_{N}\geq1}\varphi_{\nu_{1}\cdots\nu_{N}}t^{\nu_{1}%
}_{1}\cdots t^{\nu_{N}}_{N}\in A^{0,1}(X,T^{1,0}_X)
\]
for $t\in \mathbb C^N$ with $|t|< \epsilon$.
 It is easy to check that the map
\beq e^{\Phi(t)}\lrcorner:\, A^{0}(X,K_{X})\rightarrow
A^{0}(X_{t},K_{X_{t}})\label{iso}\eeq is a well-defined linear
isomorphism.

\begin{proposition} \label{holcriteria}For any smooth $(n,0)$-form
$\Omega\in A^{n,0}(X)$, the section $e^{\Phi(t)}\lrcorner\Omega\in
A^{n,0}(X_{t})$ is holomorphic with respect to the complex structure
$J_{\Phi(t)}$ induced by $\Phi(t)$ on $X_{t}$ if and only if
\begin{equation}
\label{m=1}\overline{\partial}\Omega+\partial(\Phi(t)\lrcorner\Omega)=0.
\end{equation}
\begin{proof}
This is a direct consequence of the following formula, which is \cite[Corollary 3.5]{LRY},
$$e^{- \Phi(t)} \lrcorner d ~( e^{\Phi(t)} \lrcorner \Omega)
=\bp\Omega+\p(\Phi(t)\lrcorner
\Omega)\label{rec1}.$$
\noindent In fact, the operator $d$ can be
decomposed as $d=\overline{\partial}_t+{\partial}_t$, where
$\overline{\partial}_t$ and ${\partial}_t$ denote the $(0,1)$-part
and $(1,0)$-part of $d$, with respect to the complex structure
$J_{\Phi(t)}$ induced by $\Phi(t)$ on $X_{t}$. Note that
$e^{\Phi(t)}\lrcorner\Omega\in A^{n,0}(X_{t})$ and so
$$ \p_t (e^{\Phi(t)}\lrcorner
\Om)=0.$$ Hence,
$$e^{- \Phi(t)} \lrcorner \bar{\partial}_t ~( e^{\Phi(t)} \lrcorner \Omega)
=\bp\Omega+\p(\Phi(t)\lrcorner
\Omega)\label{rec1},$$ which implies the assertion.
\end{proof}
\end{proposition}

\begin{theorem}\label{canonicalform}
Let $\Omega$ be a nontrivial holomorphic $(n,0)$-form
on the Calabi-Yau manifold $X$ and  $X_{t}=(X_{t},J_{\Phi(t)})$ be the
deformation of $X$ induced by the
smooth family $\Phi(t)$ of Beltrami differentials on $X$ as
constructed in Theorem \ref{BTT}. Then, for
$|t|< \epsilon $, \beq \Omega^{c}(t):=e^{\Phi(t)}\lrcorner\Omega\eeq defines a
canonical family of holomorphic $(n,0)$-forms on $X_{t}$
which depends on $t$ holomorphically.
\end{theorem}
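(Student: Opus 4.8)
The plan is to deduce the statement from the holomorphicity criterion of Proposition \ref{holcriteria}, thereby reducing everything to a single $\p$-closedness identity. Since $\Omega$ is holomorphic on $X$ we have $\bp\Omega=0$, so criterion \eqref{m=1} collapses to
\[
\p\bigl(\Phi(t)\lrcorner\Omega\bigr)=0,
\]
and the heart of the proof is to verify this for all $t$ with $|t|<\epsilon$. I would argue termwise on the expansion $\Phi(t)=\sum_{|I|\geq1}t^I\varphi_I$ from Theorem \ref{BTT}. For the linear terms, the $\varphi_i$ span $\H^{0,1}(X,T^{1,0}X)$ and are harmonic; by Lemma \ref{isometry} the contraction $\iota$ preserves the Hodge decomposition, so each $\varphi_i\lrcorner\Omega$ is a harmonic $(n-1,1)$-form on the compact K\"ahler manifold $X$ and is therefore $\p$-closed. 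For the higher-order terms, property $3)$ of Theorem \ref{BTT} states that $\varphi_I\lrcorner\Omega$ is $\p$-exact for $|I|\geq2$, so writing $\varphi_I\lrcorner\Omega=\p\beta_I$ gives $\p(\varphi_I\lrcorner\Omega)=0$ at once. Every term of the series for $\p(\Phi(t)\lrcorner\Omega)$ thus vanishes.

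The one genuinely delicate point, and where I expect the main work to lie, is justifying the interchange of the operator $\p$ with the infinite sum. Here I would invoke property $4)$ of Theorem \ref{BTT} together with the elliptic estimates underlying the Kodaira--Spencer--Kuranishi construction (see \cite[Theorem 4.4]{LRY}): these yield convergence of $\Phi(t)$ not merely in $L^2$ but in every H\"older norm $C^{k,\alpha}$ for $|t|<\epsilon$. Convergence in these norms legitimizes applying the first-order differential operator $\p$ term by term, so that $\p(\Phi(t)\lrcorner\Omega)=\sum_{|I|\geq1}t^I\,\p(\varphi_I\lrcorner\Omega)=0$. Combined with $\bp\Omega=0$, Proposition \ref{holcriteria} then gives that $\Omega^{c}(t)=e^{\Phi(t)}\lrcorner\Omega$ is holomorphic on $X_t$ with respect to $J_{\Phi(t)}$; since $e^{\Phi(t)}\lrcorner$ is the linear isomorphism \eqref{iso} and $\Omega$ is nontrivial, $\Omega^{c}(t)$ is a nontrivial holomorphic $(n,0)$-form.

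Finally, for the holomorphic dependence on $t$, I would note that $\Phi(t)$ is a convergent power series in $t_1,\dots,t_N$ with no dependence on $\bar t$, and that the exponential contraction truncates to the finite sum $e^{\Phi(t)}\lrcorner\Omega=\sum_{k=0}^{n}\tfrac1{k!}\,\Phi(t)^{k}\lrcorner\Omega$, because $\Phi(t)^{k}\lrcorner\Omega\in A^{n-k,k}(X)$ vanishes for $k>n$ on an $n$-fold. Thus $\Omega^{c}(t)$ is assembled from $t$-holomorphic data by finitely many algebraic operations, so every $\tfrac{\p}{\p\bar{t}_{i}}$ annihilates it once it is regarded, through the trivialization \eqref{iso}, as a section over the fixed manifold $X$. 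The only subtlety worth flagging is to state "holomorphic in $t$" precisely relative to this fixed model via $e^{\Phi(t)}\lrcorner$; with that convention the conclusion is immediate.
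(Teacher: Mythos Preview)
Your proof is correct and follows essentially the same route as the paper: both reduce via Proposition \ref{holcriteria} to showing $\p(\Phi(t)\lrcorner\Omega)=0$, and both verify this termwise using harmonicity of $\varphi_i\lrcorner\Omega$ for $|I|=1$ and $\p$-exactness of $\varphi_I\lrcorner\Omega$ for $|I|\geq 2$. The only technical difference is in justifying the interchange of $\p$ with the infinite sum: where you invoke $C^{k,\alpha}$ convergence of $\Phi(t)$, the paper works in the distribution sense, pairing $\Phi(t)\lrcorner\Omega$ against $\p^*\eta$ for arbitrary test forms and passing the limit through the $L^2$ inner product---a slightly softer argument that avoids appealing to the stronger norm estimates; your remarks on holomorphic $t$-dependence also make explicit a point the paper leaves implicit.
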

\begin{proof}
Since $\Omega$ is holomorphic, and $\Phi(t)$ is
smooth, by Proposition \ref{holcriteria}, we only need to show that
$$\partial (\Phi(t) \lrcorner \Omega)=0$$ in the distribution sense.
In fact, for any test form $\eta$ on $X$,
$$(\Phi(t) \lrcorner \Omega, \partial^* \eta)=
\lim_{k \rightarrow \infty}\left(\left(\sum_{|I| \leq k}\phi_I t^I
\right)\lrcorner \Omega, \partial^* \eta\right)=
\lim_{k \rightarrow \infty} \left( \sum_{i=1}^N t_i \phi_i \lrcorner \Omega
+ \sum_{2 \leq |I|\leq k} t^I \partial \psi_I , \eta\right)=0,$$
as $\phi_i \lrcorner \Omega, ~1 \leq i \leq N$ are harmonic and
$\phi_I \lrcorner \Omega= \partial \psi_I$ are $\partial$-exact for $|I|\geq 2$
by Theorem \ref{BTT}.
\end{proof}

\bcorollary \label{canonicalclass}
Let $\Omega^{c}(t):=e^{\Phi(t)}\lrcorner\Omega$ be the
canonical family of holomorphic $(n,0)$-forms as
constructed in Theorem \ref{canonicalform}.  Then for $|t|<\epsilon$, there holds
the following expansion of $[\Omega^c(t)]$ in cohomology
classes,
\begin{eqnarray} \label{calclass}
[\Omega^c(t)]=[\Omega]+\sum_{i=1}^{N}[\varphi_{i}\lrcorner\Omega]t_{i}+O(|t|^{2}),
\end{eqnarray}
where $O(|t|^{2})$ denotes the terms in
$\displaystyle\bigoplus_{j=2}^{n}H^{n-j,j}(X)$ of order at least
$2$ in $t$.
\ecorollary

\section{Hermitian Symmetric Space and Quantum Correction}\label{symmetric}
In Section \ref{symmetric space}, we review the definitions of locally Hermitian
symmetric spaces and globally Hermitian symmetric spaces. In Section \ref{quantum correction},
we define the quantum correction over the Teichm\"uller space of polarized and marked
Calabi-Yau manifolds, which originally comes from the quantum correction of Yukawa coupling
in the Kodaira-Spencer theory developed in \cite{BCOV}. The definition of quantum correction
also applies to polarized Hyperk\"ahler manifolds.

\subsection{Hermitian Symmetric Space}\label{symmetric space}
 First let us review some basic definitions of symmetric spaces, the reader can refer to
\cite[Chapter 11]{KN} or \cite[Chapter 3]{Z} for details.
Let $N$ be a Riemannian manifold, $p \in N$,
and $r_p > 0$ the injective radius at point $p$. Consider the diffeomorphism $s_p$ from the geodesic ball
$B_{r_p}(p)$ onto $N$ defined by
\begin{equation}
s_p(\exp_p (X))=\exp_p(-X), ~~\forall X\in B_{r_p}(0) \subset T_pN.
\end{equation}

\noindent The map $s_p$ is called the geodesic symmetry at $p$. It has $p$ as an isolated fixed point, and
$(s_p)_{*p}=-id$. In general, it is not an isometry.

\bdefinition
A Riemannian manifold $N$ is called a locally Riemannian symmetric space, if
for any point $p \in N$, the geodesic symmetry $s_p$ is an isometry on $B_{r_p}(p)$.
$N$ is called a globally Riemannian symmetric space if, for any point $p \in N$, there
exists an isometry in its isometry group $I(N)$ whose restriction on $B_{r_p}(p)$ is $s_p$.
\edefinition

\noindent Clearly, globally Riemannian symmetric spaces are locally Riemannian symmetric spaces.
Applying the theorem of Cartan-Ambrose-Hicks \cite{cartan, Ambrose56, Hicks59, Hicks66}
to the map $s_p$ and the isometry $I =-id$ at $T_p N$, we immediately get the following lemma

\blemma\label{mainlemma}
A Riemannian manifold $N$ is a locally Riemannian symmetric space if and only if $\bigtriangledown R=0$,
 i.e., the curvature tensor is parallel. Also, if a locally Riemannian symmetric space is complete and simply-connected,
then it is a globally Riemannian symmetric space. Two locally Riemannian symmetric
spaces are locally isometric if they have the same curvature at one point.
\elemma

\noindent Now, let us consider the complex case,

\bdefinition
A Hermitian manifold $N$ is a locally Hermitian symmetric space if, for any point $p \in N$, $s_p:\exp_p(X)\rightarrow
\exp_p(-X), \forall X \in T_p N$ is a local automorphism around $p$ of $N$, i.e., $s_p$ leaves
its Levi-Civita connection $\nabla$ and complex structure $J$ invariant. It is called a globally Hermitian
symmetric space if it is connected and for any point $p \in N$ there exists an involutive automorphism
$s_p$ of $N$ with $p$ as an isolated fixed point.
\edefinition

\noindent Similarly, in terms of the curvature tensor,~ we have the following characterization
of locally Hermitian symmetric spaces.
\begin{thm}
A Hermitian manifold is a locally Hermitian symmetric space if and only if
\begin{equation}
\nabla R=0=\nabla J.
\end{equation}
where $\nabla$ is the Levi-Civita connection associated to the underlying
Riemannian metric.
\end{thm}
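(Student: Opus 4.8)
The plan is to exploit the one structural fact that every geodesic symmetry satisfies $(s_p)_{*p}=-\mathrm{id}$ on $T_pN$, and to split the Hermitian statement into its Riemannian part (handled by Lemma \ref{mainlemma}) plus an extra condition on $J$. The guiding principle is a parity observation: regarding $\nabla R$ and $\nabla J$ as tensors with an odd total number of slots ($5$ and $3$, respectively), any such tensor that is fixed by a map whose differential at $p$ is $-\mathrm{id}$ must vanish at $p$. Indeed, pulling back a $(1,s)$-tensor $T$ under $s_p$ and evaluating at $p$ gives $(s_p^{*}T)_p=(-1)^{s+1}T_p$.

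For the forward direction I would assume $N$ is locally Hermitian symmetric, so each $s_p$ preserves both $\nabla$ and $J$. Since $s_p$ preserves $\nabla$ it is affine, hence preserves the curvature $R$ and its covariant derivative $\nabla R$; evaluating $s_p^{*}(\nabla R)=\nabla R$ at $p$ yields $(\nabla R)_p=(-1)^{5}(\nabla R)_p=-(\nabla R)_p$, so $(\nabla R)_p=0$. The identical parity computation applied to $\nabla J$, which $s_p$ preserves because it preserves both $\nabla$ and $J$, gives $(\nabla J)_p=(-1)^{3}(\nabla J)_p=0$. Since $p$ is arbitrary, $\nabla R=0=\nabla J$.

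For the converse I would assume $\nabla R=0=\nabla J$. By Lemma \ref{mainlemma}, the condition $\nabla R=0$ already makes $N$ a locally Riemannian symmetric space, so each $s_p$ is a local isometry and in particular preserves $\nabla$; it remains only to show $s_p$ preserves $J$. Here I would use that an affine map intertwines parallel transport: with $\gamma(t)=\exp_p(tX)$, $\tilde\gamma(t)=\exp_p(-tX)=s_p(\gamma(t))$, and parallel transports $P_\gamma^{t},P_{\tilde\gamma}^{t}$ emanating from $p$, one has $(s_p)_{*}P_\gamma^{t}=P_{\tilde\gamma}^{t}(s_p)_{*p}=-P_{\tilde\gamma}^{t}$. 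Because $\nabla J=0$, the tensor $J$ commutes with parallel transport along every curve. Writing an arbitrary tangent vector at $q=\gamma(t)$ as $P_\gamma^{t}v$ and chaining these two facts gives $(s_p)_{*}J(P_\gamma^{t}v)=-P_{\tilde\gamma}^{t}(J_p v)=-J\,P_{\tilde\gamma}^{t}v=J\big((s_p)_{*}P_\gamma^{t}v\big)$, so $(s_p)_{*}$ commutes with $J$. Thus $s_p$ preserves $J$ as well, is a local automorphism, and $N$ is locally Hermitian symmetric.

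The main obstacle is the converse, specifically converting $\nabla J=0$ into the geometric statement that $s_p$ preserves $J$: the parity trick works only in the forward direction, and in the converse one must genuinely combine the affineness of $s_p$ (supplied by $\nabla R=0$ through Lemma \ref{mainlemma}) with the parallelism of $J$. The points I would check carefully are that $s_p\circ\gamma$ is exactly the reversed geodesic $\tilde\gamma$, that the intertwining relation $(s_p)_{*}P_\gamma=P_{s_p\circ\gamma}(s_p)_{*p}$ holds throughout a normal neighborhood on which $\exp_p$ is a diffeomorphism, and that the vectors $P_\gamma^{t}v$ exhaust $T_qN$, so that commuting with $J$ at each such vector suffices.
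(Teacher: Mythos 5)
Your proposal is correct. Note that the paper itself gives no proof of this theorem: it is stated as a standard characterization, with the reader referred to \cite[Chapter 11]{KN} and \cite[Chapter 3]{Z}, and your argument is exactly the classical one found in those references --- the parity trick $(s_p^{*}T)_p=(-1)^{s+1}T_p$ for a $(1,s)$-tensor fixed by a map with differential $-\mathrm{id}$ at $p$ handles the forward direction, while the converse combines the Cartan--Ambrose--Hicks lemma (the paper's Lemma 3.2) with the intertwining of parallel transport by the affine map $s_p$ and the parallelism of $J$. One small virtue of your write-up worth keeping: by treating $R$ as the $(1,3)$ curvature tensor of the connection, invariance of $R$ and $\nabla R$ under $s_p$ follows from affineness alone, so no metric-preservation is needed in the forward direction, matching the paper's definition of local Hermitian symmetry as invariance of $\nabla$ and $J$ only.
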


\bcorollary\label{riemanncomplex}
Let $N$ be a K\"ahler manifold , if $N$ is
a locally Riemannian symmetric space, then $N$ is a locally Hermitian
symmetric space.
\ecorollary

\noindent For the Riemannican curvature tensor, Nomizu and Ozeki \cite{NO62} and later Nomizu, without assuming
completeness, proved the following proposition.
\bproposition \label{cur}
(Nomizu and Ozeki \cite{NO62}, Nomizu) For a Riemannian manifold $(N,g)$, if
$\nabla^k R=0$ for some $k \geq 1$, then $\nabla R =0$.
\eproposition

\subsection{Quantum Correction}\label{quantum correction}
In this section, we will define the quantum correction over the Teichm\"uller space
of polarized and marked Calabi-Yau manifolds.
Our motivation for quantum correction comes from the Kodaira-Spencer theory developed
in \cite[Chapter 5]{BCOV}.

\noindent The physical fields of the Kodaira-Spencer theory are differential forms of type $(0,1)$ on $X$
 with coefficients $(1,0)$-vectors, i.e.,
sections $\psi \in C^{\infty}(X, T^{* 0,1} X \otimes T^{1,0} X)$ with
\begin{eqnarray*}
\partial (\psi \lrcorner \Omega)=0,
\end{eqnarray*}
where $\Omega$ is a nowhere vanishing $(n,0)$-form normalized as in Lemma \ref{isometry}.
 Then the Kodaira-Spencer action is given as follows
\begin{eqnarray*}
\lambda^2 S(\psi, \phi| p)= \frac{1}{2} \int_X \psi\lrcorner \Omega
\wedge \frac{1}{\partial} \bar{\partial} (\psi \lrcorner \Omega)
+ \frac{1}{6} \int_X ((\psi+ \phi)\wedge (\psi+\phi))\lrcorner \Omega
\wedge (\psi+\phi)\lrcorner \Omega,
\end{eqnarray*}
where $\lambda$ is the coupling constant. The Euler-Lagrange equation of this action is
\begin{eqnarray*}
\bp (\phi \lrcorner \Omega) + \frac{1}{2}
\p ((\psi+ \phi)\wedge (\psi+\phi))\lrcorner \Omega=0.
\end{eqnarray*}

\noindent They also discussed that the Kodaira-Spencer action is a closed string theory action
at least up to cubic order. In a properly regularized Kodaira-Spencer theory,
the partition function should satisfy
\begin{eqnarray}
e^{W(\lambda, \phi| t, \bar{t})}= \int D\psi e^{S(\lambda, \phi| t, \bar{t})}.
\end{eqnarray}
The effective action $W(\lambda, \phi| t, \bar{t})$ is physically computed in \cite{BCOV}
in the flat affine coordinate $t=(t_1, t_2, \cdots, t_N)$. The term $W_0(\lambda, \phi| t, \bar{t})$
in front of $\lambda^{-2}$ satisfies
\begin{eqnarray}
W_0(\lambda, \phi| t, \bar{t})= \lambda^2 S_0(\phi, \psi| t, \bar{t}),
\end{eqnarray}
where $\psi(t)$ and $W_0(\lambda, \phi| t, \bar{t})$ satisfy
\begin{eqnarray}
\frac{\partial \psi(t)}{\partial t_i}|_{t=0}=
\phi_i, ~~~\bp(\psi \lrcorner \Omega) + \frac{1}{2} ((\phi+ \psi(t))\wedge (\phi +\psi(t)))\lrcorner \Omega=0
\end{eqnarray}
and
\begin{eqnarray}\label{12}
\frac{\partial^3 W_0(\phi| t, \bar{t})}{\partial t_i \partial t_j \partial t_k}
=C_{ijk}(t_1, t_2, \cdots, t_N).
\end{eqnarray}

\noindent $W_0(\phi|t, \bar{t})$ may be viewed as the effective action for the massless modes $\phi$
from which the massive modes will be integrated out. It is quite amazing that integrating the massive modes has only
the effectivity of taking derivatives of the Yukawa coupling. For example the four point function give rise to
$\nabla_l C_{ijk}$, the five point function to $\nabla_s \nabla_l C_{ijk}$ and the six point
function to $\nabla_r \nabla_s \nabla_l C_{ijk}$. Thus all the discussion suggests us to define
the quantum correction of the Yukawa coupling as
\begin{eqnarray}\label{physical}
\sum_{s_1 + \cdots+ s_N=1}^{\infty} C_{ijk, s_1, \cdots, s_N} t_1^{s_1}\cdots t_N^{s_N}.
\end{eqnarray}

\noindent  On the other hand, besides the canonical family of holomorphic $(n,0)$-forms, we can define
the classic canonical family as
\begin{eqnarray}
\Omega^{cc}(t) = \exp(\sum_{i=1}^N t_i \phi_i) \lrcorner \Omega \in A^n(X, \mathbb{C}).
\end{eqnarray}

\begin{proposition} \label{physicalqc}
Fix $p\in \mathcal{T}$, let $X$ be the corresponding fiber in the versal
family $ \mathcal{X} \rightarrow \mathcal{T}$.
Let $\Omega$ be a nontrivial holomorphic $(n,0)$-form over $X$
and $\{\phi_i\}_{i=1}^N$ be an orthonormal basis of $\mathbb{H}^{0,1}(X, T^{1,0}X)$
with respect to the Calabi-Yau metric.
If the cohomology class
\begin{eqnarray*}
[\Xi(t)]=[\Omega^c(t)]-[\Omega^{cc}(t)]=[\Omega^c(t)]-[\exp(\sum_{i=1}^N t_i \phi_i) \lrcorner \Omega]=0,
\end{eqnarray*}
then the quantum correction of the Yukawa coupling vanishes, i.e.,
\begin{eqnarray*}
\sum_{s_1 + \cdots+ s_N=1}^{\infty} C_{ijk, s_1, \cdots, s_N} t_1^{s_1}\cdots t_N^{s_N}=0.
\end{eqnarray*}
 Moreover, $\suml_{i,j,k=1}^N t_i t_j t_k[\phi_i \lrcorner \phi_{jk} \lrcorner \Omega]=0$
if and only if the first order quantum correction of the Yukawa coupling
\begin{eqnarray*}
\sum_{s_1 + \cdots+ s_N=1} C_{ijk, s_1, \cdots, s_N} t_1^{s_1}\cdots t_N^{s_N}=0.
\end{eqnarray*}
\bproof
From the definition of Yukawa coupling, with the flat affine coordinate $t=(t_1, \cdots, t_N)$, we have
\begin{eqnarray*}
C_{ijk}(t_1, t_2, \cdots , t_N) &=&
\int_X \Omega^c(t) \wedge \frac{\partial^3 \Omega^c(t)}
{\partial t_i \partial t_j \partial t_k}\\
&=& \int_X \Omega^c (t) \wedge \frac{\partial^3 }
{\partial t_i \partial t_j \partial t_k} \left(\exp(\sum_{i=1}^N t_i \phi_i)\lrcorner \Omega+ \Xi(t) \right),
\end{eqnarray*}
where $\Xi(t)=\Omega^c(t)-\exp(\suml_{i=1}^N t_i \phi_i) \lrcorner \Omega$.
Direct computation shows that $$\frac{\partial^3 }
{\partial t_i \partial t_j \partial t_k} \left(\exp(\suml_{i=1}^N t_i \phi_i)\lrcorner\Omega \right)= \phi_i \lrcorner
\phi_j \lrcorner \phi_k \lrcorner \Omega,$$
thus the term
$$\int_X \Omega^c(t) \wedge \frac{\partial^3 }
{\partial t_i \partial t_j \partial t_k} \left(\exp(\suml_{i=1}^N t_i \phi_i) \lrcorner \Omega\right)$$
has order zero with respect to $t$.
Thus the quantum correction of Yukawa coupling satisfies
\begin{eqnarray*}
\sum_{s_1 + \cdots+ s_N=1}^{\infty} C_{ijk, s_1, \cdots, s_N} t_1^{s_1}\cdots t_N^{s_N}
= \int_X \Omega^c (t) \wedge \Xi(t).
\end{eqnarray*}
Therefore, $[\Xi(t)]=0$ implies that the quantum correction of the Yukawa coupling vanishes.
\noindent Moreover, we have
\begin{eqnarray*}
&&\suml_{s_1 + \cdots+ s_N=1} C_{ijk, s_1, \cdots, s_N} t_1^{s_1}\cdots t_N^{s_N}=0;\\
& \Longleftrightarrow &
\int_X \Omega \wedge \phi_i \lrcorner \phi_j \lrcorner \phi_{I}\lrcorner \Omega
+ \int_X \phi_i \lrcorner \Omega \wedge \phi_j \lrcorner \phi_I \lrcorner \Omega=0
~~~\text{for any}~~~ 1 \leq i, j \leq N  ~~~\text{and}~~~  |I|=2;\\
& \Longleftrightarrow &
\int_X \phi_i \lrcorner \Omega \wedge \phi_j \lrcorner \phi_I \lrcorner \Omega=0,
~~~\text{for any}~~~ 1 \leq i, j \leq N  ~~~\text{and}~~~  |I|=2; \\
&&\left(\text{as}~ \phi_i \lrcorner \Omega \wedge \phi_j \lrcorner \phi_I \lrcorner \Omega
= \Omega\wedge \phi_i \lrcorner \phi_j \lrcorner \phi_I \lrcorner \Omega \right);\\
&\Longleftrightarrow &
\mathbb{H}(\phi_j \lrcorner \phi_I \lrcorner \Omega)=0,
~~~\text{for any}~~~ 1 \leq j \leq N ~~~\text{and}~~~ |I|=2; \\
&& \left(\text{as} ~~~ \{[\phi_i \lrcorner \Omega]\}_{i=1}^N
~~~\text {is a basis of}~~~ \mathbb{H}^{2,1}(X)\right);\\
&\Longleftrightarrow &
\suml_{i,j,k=1}^N t_i t_j t_k[\phi_i \lrcorner \phi_{jk} \lrcorner \Omega]=0.
\end{eqnarray*}
\eproof
\end{proposition}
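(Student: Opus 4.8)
The plan is to read the quantum correction off the definition of the Yukawa coupling after splitting $\Omega^c$. Starting from
\[
C_{ijk}(t)=\int_X \Omega^c(t)\wedge \frac{\partial^3 \Omega^c(t)}{\partial t_i\,\partial t_j\,\partial t_k},
\]
I would insert $\Omega^c(t)=\exp(\sum_l t_l\phi_l)\lrcorner\Omega+\Xi(t)$ into the differentiated factor, so that $C_{ijk}(t)$ becomes a classical integral $\int_X\Omega^c(t)\wedge\partial_i\partial_j\partial_k(\exp(\sum_l t_l\phi_l)\lrcorner\Omega)$ plus a corrected integral $\int_X\Omega^c(t)\wedge\partial_i\partial_j\partial_k\Xi(t)$. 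For threefolds $\exp(\sum_l t_l\phi_l)\lrcorner\Omega$ is a cubic polynomial in $t$ (a fourth contraction annihilates $\Omega$), so its third derivative equals the constant form $\phi_i\lrcorner\phi_j\lrcorner\phi_k\lrcorner\Omega\in A^{0,3}(X)$. Since this form has pure bidegree $(0,3)$, only the $(n,0)$-part of $\Omega^c(t)$ survives the wedge, and that part is identically $\Omega$ (every contraction in $e^{\Phi(t)}\lrcorner\Omega$ strictly lowers the holomorphic degree). Hence the classical integral equals the $t$-independent constant $\int_X\Omega\wedge\phi_i\lrcorner\phi_j\lrcorner\phi_k\lrcorner\Omega$, and the entire quantum correction (the sum over $s_1+\cdots+s_N\geq1$) equals $\int_X\Omega^c(t)\wedge\partial_i\partial_j\partial_k\Xi(t)$.

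Given this, the first assertion follows at once. The hypothesis $[\Xi(t)]=0$ means $\Xi(t)$ is $d$-exact throughout a neighborhood of $p$; since $\partial/\partial t$ commutes with $d$, the form $\partial_i\partial_j\partial_k\Xi(t)$ is again $d$-exact, and pairing it with the $d$-closed form $\Omega^c(t)$ gives zero. Thus the quantum correction of the Yukawa coupling vanishes identically.

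For the ``moreover'' part I would isolate the linear-in-$t$ coefficient of $\int_X\Omega^c(t)\wedge\partial_i\partial_j\partial_k\Xi(t)$. By Theorem~\ref{BTT}, $\varphi_I\lrcorner\Omega$ is $\partial$-exact for $|I|\geq2$, so the cohomologically nontrivial part of $\Xi(t)$ is carried by the second-order coefficients $\varphi_I$, $|I|=2$; tracking bidegrees, the first-order correction is a sum over $|I|=2$ of $\int_X\Omega\wedge\phi_i\lrcorner\phi_j\lrcorner\varphi_I\lrcorner\Omega+\int_X\phi_i\lrcorner\Omega\wedge\phi_j\lrcorner\varphi_I\lrcorner\Omega$. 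I would then chain equivalences: the normalization in Lemma~\ref{isometry} gives the pointwise identity $\phi_i\lrcorner\Omega\wedge\phi_j\lrcorner\varphi_I\lrcorner\Omega=\Omega\wedge\phi_i\lrcorner\phi_j\lrcorner\varphi_I\lrcorner\Omega$, so the two summands coincide and the condition reduces to $\int_X\phi_i\lrcorner\Omega\wedge\phi_j\lrcorner\varphi_I\lrcorner\Omega=0$ for all $i,j$ and $|I|=2$; because $\{[\phi_i\lrcorner\Omega]\}_{i=1}^N$ is an orthonormal basis of $H^{2,1}(X)$ paired nondegenerately with $H^{1,2}(X)$, this is equivalent to $\mathbb{H}(\phi_j\lrcorner\varphi_I\lrcorner\Omega)=0$, which reassembled against $t$ reads $\sum_{i,j,k}t_it_jt_k[\phi_i\lrcorner\phi_{jk}\lrcorner\Omega]=0$.

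The step I expect to be the main obstacle is the ``moreover'' direction: correctly extracting the first-order quantum correction as exactly those two integrals, and then validating the two equivalences---the pointwise adjunction identity moving a contraction across the wedge with $\Omega$, and the passage from the family of scalar vanishing conditions to the single harmonic-projection statement $\mathbb{H}(\phi_j\lrcorner\varphi_I\lrcorner\Omega)=0$. The order-zero reduction of the classical integral in the first paragraph, by contrast, is forced by the bidegree count (only the constant $(n,0)$-part $\Omega$ of $\Omega^c(t)$ can pair with a $(0,n)$-form), and the supporting facts---the $\partial$-exactness of $\varphi_I\lrcorner\Omega$ for $|I|\geq2$ (Theorem~\ref{BTT}) and the Hodge-preserving isometry $\iota(\varphi)=\varphi\lrcorner\Omega$ (Lemma~\ref{isometry})---are available as stated.
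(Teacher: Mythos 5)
Your proposal is correct and takes essentially the same route as the paper: the same splitting $\Omega^c(t)=\exp(\sum_i t_i\phi_i)\lrcorner\Omega+\Xi(t)$ inside the third derivative, the same observation that the classical term is constant in $t$, and the same chain of equivalences (the two-integral condition for $|I|=2$, the contraction identity $\phi_i\lrcorner\Omega\wedge\phi_j\lrcorner\phi_I\lrcorner\Omega=\Omega\wedge\phi_i\lrcorner\phi_j\lrcorner\phi_I\lrcorner\Omega$, nondegeneracy of the pairing against the basis $\{[\phi_i\lrcorner\Omega]\}$, and reassembly into $\sum_{i,j,k}t_it_jt_k[\phi_i\lrcorner\phi_{jk}\lrcorner\Omega]=0$). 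You in fact supply details the paper elides---the bidegree count forcing the classical integral to be $t$-independent, the $d$-exactness pairing argument for the first assertion, and the retention of $\partial_i\partial_j\partial_k$ on $\Xi(t)$ where the paper's displayed formula $\int_X\Omega^c(t)\wedge\Xi(t)$ drops it---so no further changes are needed.
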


\begin{lemma}
Under the conditions as Proposition \ref{physicalqc}, the form $\Xi(t)$ are identically zero, i.e., $\Xi(t)=0$ if and only if $[\phi_i, \phi_j]=0$
for all $1 \leq i, j \leq N$. And, for $|t|< \epsilon$, there holds the following expansion of $[\Xi(t)]$
in cohomology classes,
\begin{eqnarray}
[\Xi(t)]= \sum_{i,j,k=1}^N t_i t_j t_k [\phi_i \lrcorner \phi_{jk} \lrcorner \Omega] + O(|t|^4).
\end{eqnarray}
where $O(|t|^4)$ denotes the terms of order at least $4$ in $t$.
\bproof
From the construction of the smooth family \ref{betrami} of Beltrami differentials, see \cite[page 162] {Morrow-Kodaira71} or \cite{To}, we have
\begin{eqnarray}  \label{induction}
\phi_{K}= -\frac{1}{2} \bar{\partial}^* G  (\sum_{I+J=K} [\phi_I, \phi_J] )
~~~~\text{for}~~ |K| \geq 2.
\end{eqnarray}
 Thus we have
\begin{eqnarray*}
\Xi(t)=0  & \Longleftrightarrow & \left(\exp\left(\sum_{|I| \geq 1} \phi_I t^I\right)
-\exp\left(\sum_{i=1}^N \phi_i t_i\right) \right)\lrcorner \Omega=0\\
&\Longleftrightarrow & \sum_{|I| \geq 1} \phi_I t^I= \sum_{i=1}^N \phi_i t_i \\
& \Longleftrightarrow &  \phi_I =0   ~~~~~\text{for}~~~ |I|\geq 2 \\
& \Longleftrightarrow & [\phi_i, \phi_j]=0~~~~\text{for} ~~~ 1 \leq i, j \leq N ~~~~\text{by Formula} \, (\ref{induction}).
\end{eqnarray*}

\noindent  Moreover, by the property that $\phi_I \lrcorner \Omega = \partial \psi_I $ for $|I| \geq 2$ by Theorem \ref{BTT}, the cohomology class
of the quantum correction satisfies
\begin{eqnarray*}
[\Xi(t)] &=& \left[\left(\exp\left(\sum_{|I| \geq 1} \phi_I t^I\right)
-\exp\left(\sum_{i=1}^N \phi_i t_i\right)\right)\lrcorner \Omega \right] \\
&=& \left [\sum_{i, j=1}^N (\phi_{ij} \lrcorner \Omega) t_i t_j + \sum_{i, j, k
=1}^N (\phi_i \lrcorner \phi_{jk} \lrcorner \Omega + \phi_{ijk} \lrcorner \Omega) t_i t_j t_k  + O(|t|^4) \right]\\
&=& \sum_{i,j,k=1}^N t_i t_j t_k [\phi_i \lrcorner \phi_{jk} \lrcorner \Omega] + O(|t|^4).
\end{eqnarray*}
\eproof
\end{lemma}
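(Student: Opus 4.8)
The plan is to establish the two assertions separately: first the pointwise vanishing criterion, then the cohomological expansion.

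For the equivalence $\Xi(t)\equiv 0\Leftrightarrow[\phi_i,\phi_j]=0$, I would begin from $\Xi(t)=\bigl(e^{\Phi(t)}-e^{\sum_i t_i\phi_i}\bigr)\lrcorner\Omega$ and reduce vanishing of the forms to vanishing of the Beltrami families. The $(n-1,1)$-component of $e^{\Phi(t)}\lrcorner\Omega$ is exactly $\Phi(t)\lrcorner\Omega$, and that of $e^{\sum_i t_i\phi_i}\lrcorner\Omega$ is $(\sum_i t_i\phi_i)\lrcorner\Omega$; since $\varphi\mapsto\varphi\lrcorner\Omega$ is injective by Lemma \ref{isometry}, equating these components forces $\Phi(t)=\sum_i t_i\phi_i$ for all $t$, i.e. $\phi_I=0$ for every $|I|\geq 2$. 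It then remains to match ``$\phi_I=0$ for $|I|\geq 2$'' with ``$[\phi_i,\phi_j]=0$''. For the direction $\Leftarrow$ I would use the Kodaira--Spencer recursion $\phi_K=-\tfrac12\bar\partial^*G\bigl(\sum_{I+J=K}[\phi_I,\phi_J]\bigr)$ (see \cite{To}) and induct on $|K|$: if all brackets of the $\phi_i$ vanish then $\phi_K=0$ for $|K|=2$, after which every summand in the recursion for larger $|K|$ already contains a vanishing factor $\phi_J$. For $\Rightarrow$ I would substitute the now-linear $\Phi=\sum_i t_i\phi_i$ into the integrability equation $\bar\partial\Phi=\tfrac12[\Phi,\Phi]$ of Theorem \ref{BTT}; the left side is zero since the $\phi_i$ are harmonic, so $\sum_{i,j}t_it_j[\phi_i,\phi_j]=0$ identically in $t$, and comparing coefficients (using $[\phi_i,\phi_j]=[\phi_j,\phi_i]$) gives $[\phi_i,\phi_j]=0$.

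For the expansion of $[\Xi(t)]$, the first step is to confirm that $\Xi(t)$ is $d$-closed so that its de Rham class is defined. Here $\Omega^c(t)$ is closed because it is a holomorphic $(n,0)$-form on $X_t$, and $\Omega^{cc}(t)=e^{L}\lrcorner\Omega$ with $L=\sum_i t_i\phi_i$ is closed as well: applying the identity $e^{-L}\lrcorner d(e^{L}\lrcorner\Omega)=\bar\partial\Omega+\partial(L\lrcorner\Omega)$ from \cite{LRY}, the right side vanishes since $\bar\partial\Omega=0$ and each harmonic form $\phi_i\lrcorner\Omega$ is $\partial$-closed. Next I would Taylor-expand $e^{\Phi(t)}\lrcorner\Omega-e^{L}\lrcorner\Omega$ at the level of forms, writing $\Phi=\Phi_1+\Phi_2+\cdots$ with $\Phi_d=\sum_{|I|=d}\phi_I t^I$. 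The degree-zero and degree-one terms cancel; using the symmetry $\Phi_1\wedge\Phi_2=\Phi_2\wedge\Phi_1$ of the wedge of $T^{1,0}$-valued $(0,1)$-forms and the contraction identity $(\Phi_1\wedge\Phi_2)\lrcorner\Omega=\Phi_1\lrcorner\Phi_2\lrcorner\Omega$, the degree-two part is $\sum_{i,j}(\phi_{ij}\lrcorner\Omega)t_it_j$ and the degree-three part is $\sum_{i,j,k}(\phi_{ijk}\lrcorner\Omega+\phi_i\lrcorner\phi_{jk}\lrcorner\Omega)t_it_jt_k$.

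Finally I would pass to cohomology term by term, which is legitimate because each homogeneous-in-$t$ component of the closed family $\Xi(t)$ is itself $d$-closed. By property $3)$ of Theorem \ref{BTT} the forms $\phi_I\lrcorner\Omega$ with $|I|\geq 2$ are $\partial$-exact, hence orthogonal to the harmonic space, so they contribute nothing to the de Rham class; thus $[\phi_{ij}\lrcorner\Omega]=0$ and $[\phi_{ijk}\lrcorner\Omega]=0$, which kills the quadratic term entirely and leaves $[\Xi(t)]=\sum_{i,j,k}t_it_jt_k[\phi_i\lrcorner\phi_{jk}\lrcorner\Omega]+O(|t|^4)$. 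I expect the main obstacle to be exactly this passage to cohomology: one must first secure closedness of $\Xi(t)$ (and hence of each graded piece), and then recognize that within the cubic piece the summand $\phi_{ijk}\lrcorner\Omega$ is $\partial$-exact and so drops out, whereas $\phi_i\lrcorner\phi_{jk}\lrcorner\Omega$ need not be and is what survives in the class. The cubic wedge bookkeeping in the form-level expansion and the bracket-symmetry argument in the first part are routine by comparison.
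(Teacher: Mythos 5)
Your first part is correct and in fact supplies details the paper leaves implicit: the paper simply chains the equivalences through the recursion $\phi_{K}=-\frac{1}{2}\bar{\partial}^{*}G\bigl(\sum_{I+J=K}[\phi_I,\phi_J]\bigr)$, whereas you verify both directions (induction on $|K|$ for one, the integrability equation $\bar{\partial}\Phi=\frac{1}{2}[\Phi,\Phi]$ together with the symmetry of the bracket for the other) and justify the passage from $\Xi(t)=0$ to $\Phi(t)=\sum_i t_i\phi_i$ by comparing $(n-1,1)$-components and invoking injectivity of contraction with $\Omega$. Likewise your form-level Taylor expansion and the final step --- discarding $\phi_{ij}\lrcorner\Omega$ and $\phi_{ijk}\lrcorner\Omega$ because $\phi_I\lrcorner\Omega=\partial\psi_I$ for $|I|\geq 2$ --- reproduce the paper's computation exactly.

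The flawed step is your closedness claim for $\Omega^{cc}(t)=e^{L}\lrcorner\Omega$ with $L=\sum_i t_i\phi_i$. The identity $e^{-\Phi}\lrcorner d(e^{\Phi}\lrcorner\Omega)=\bar{\partial}\Omega+\partial(\Phi\lrcorner\Omega)$ of \cite{LRY} is proved (see the proof of Proposition \ref{holcriteria}) by decomposing $d=\partial_t+\bar{\partial}_t$ with respect to the complex structure $J_{\Phi(t)}$, so it requires $\Phi$ to be integrable; $L$ is not integrable unless all brackets vanish, and the general identity carries the extra term $\bigl(\bar{\partial}L-\frac{1}{2}[L,L]\bigr)\lrcorner\Omega=-\frac{1}{2}[L,L]\lrcorner\Omega$. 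Concretely, since $\partial(\phi_i\lrcorner\Omega)=0$, the Tian--Todorov lemma gives $\partial(\phi_i\lrcorner\phi_j\lrcorner\Omega)=\pm[\phi_i,\phi_j]\lrcorner\Omega$, so $d\,\Omega^{cc}(t)$ has a nonvanishing order-$t^2$ term exactly when some $[\phi_i,\phi_j]\neq 0$ --- that is, precisely in the situation where the quantum correction is nontrivial and the lemma has content. Hence $\Xi(t)$ is not $d$-closed and its de Rham class is not literally defined; to be fair, this is a defect the paper itself sidesteps, since it never addresses closedness and silently treats $[\,\cdot\,]$ as harmonic projection (compare Proposition \ref{physicalqc}, where the relevant conditions are phrased through $\mathbb{H}$). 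Under that reading your conclusion is restored without any closedness step: on a compact K\"ahler manifold, $\partial$-exact forms are $L^2$-orthogonal to harmonic ones, so $\mathbb{H}(\phi_I\lrcorner\Omega)=0$ for $|I|\geq 2$, and the expansion follows termwise as in the paper. You should therefore delete the appeal to the \cite{LRY} identity for $L$ (or restrict it to the integrable family $\Phi(t)$) and state explicitly in which sense the class $[\Xi(t)]$ is taken.
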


\noindent  Thus the lowest order quantum correction has the form
$\sum_{i,j,k=1}^N t_i t_j t_k [\phi_i \lrcorner \phi_{jk} \lrcorner \Omega]$,
so we have the following definition,
\noindent \bdefinition We define the cohomology class
$[\Xi(t)]$
to be the strong quantum correction at point $p \in \mathcal{T}$ and the cohomology class
$$[\Xi(t)]_1=\sum_{i,j,k=1}^N t_i t_j t_k [\phi_i \lrcorner \phi_{jk} \lrcorner \Omega],$$
to be the weak quantum correction at point $p \in \mathcal{T}$.
If $[\Xi(t)]_1=0,$
we will say that there is no weak quantum correction
at point $p \in \mathcal{T}$.
Moreover, if $[\Xi(t)]=0,$
we will say that there is no strong quantum correction at point $p \in \mathcal{T}$.
\edefinition

\begin{remark}
For Calabi-Yau threefolds, by Proposition \ref{physicalqc}, no strong quantum correction at point $p \in \mathcal{T}$
implies that there is no quantum correction of Yukawa coupling at point $p \in \mathcal{T}$.
Moreover, no weak quantum correction at point $p \in \mathcal{T}$ is equivalent to that
the first order quantum correction of Yukawa coupling at $p \in \mathcal{T}$ vanishes.
\end{remark}

\section{Quantum Correction and the Weil-Petersson Metric}\label{qc and wpmetric}
In Section \ref{wpmetric} and Section \ref{cuvature}, we review the
Weil-Petersson metric over the Teichm\"uller space $\mathcal{T}$
and derive a local formula for $\nabla_r R$ and $\nabla_{\overline{r}} R$ in
the flat affine coordinate. In Section \ref{wpsymmetric},
under the assumption of no weak quantum correction
 at any point $p \in \mathcal{T}$, we prove
that $\mathcal{T}$ is a locally Hermitian symmetric space with the Weil-Petersson metric
by using the formula of $\nabla_r R$ and $\nabla_{\overline{r}} R$.
The results in this section also hold for polarized Hyperk\"ahler manifolds.

\subsection{The Weil-Petersson Geometry}\label{wpmetric}
The local Kuranishi family of polarized Calabi-Yau manifolds $\pi: \mathcal{X}\rightarrow S$
is unobstructed by the Bogomolov-Tian-Todorov theorem \cite{Ti, To}. One can assign the
unique Ricci-flat or Calabi-Yau metric $g(s)$ on the fiber $X:=X_s$ in the polarization
K\"ahler class \cite{Y}. Then, on the fiber $X$, the Kodaira-Spencer theory
gives rise to an injective map $\rho: T_s S \longrightarrow H^1(X,T^{1,0}X)\cong
\mathbb{H}^{0,1}(X, T^{1,0}X)$, the space of harmonic representatives.
The metric $g(s)$ induces a metric on $A^{0,1}(X,T^{1,0}X)$. The reader may also refer to \cite{W} for the discussion. For $v,w \in T_s S$,
one then defines the Weil-Petersson metric on S by
\begin{equation}
g_{WP}(v,w):=\int_{X} \langle \rho(v), \rho(w)\rangle_{g(s)} dvol_g(s).
\end{equation}

\noindent Let $\dim X=n$,  by the fact that the global holomorphic $(n,0)$-form $\Omega:=\Omega(s)$ is
flat with respect to $g(s)$, it can be shown \cite{Ti} that
\begin{equation}\label{WP}
g_{WP}(v,w)=-\frac{\tilde{Q}(i(v)\Omega, \overline{i(w)\Omega})}{\tilde{Q}(\Omega, \overline{\Omega})}.
\end{equation}

\noindent Here, for convenience, we write $\tilde{Q}(\cdot,\cdot)=(\sqrt{-1})^n Q(\cdot, \cdot)$, where
$Q$ is the intersection product. Therefore, $\tilde{Q}$ has alternating signs in the successive
primitive cohomology groups $H_{pr}^{p,q}\subset H^{p,q},~~ p+q=n$, which we simply denote by $H^{p,q}$ for convenience.

\noindent The formula $(\ref{WP})$ implies that the natural map $H^1(X,T^{1,0}X) \longrightarrow \mbox{Hom} (H^{n,0}, H^{n-1,1})$
via the interior product $v \mapsto v \lrcorner \Omega$ is an isometry from the tangent space
$T_s S$ to $(H^{n,0})^* \otimes H^{n-1,1}$. So the Weil-Petersson metric is precisely the metric induced
from the first piece of the Hodge metric on the horizontal tangent bundle over the
period domain. Let $F^n$ denote the Hodge bundle induced by $H^{n,0}$. A simple calculation in formal Hodge theory shows that
\begin{equation}
\omega_{WP}=Ric_{\tilde{Q}}(F^n)=-\partial \bar{\partial} \log \tilde{Q}(\Omega, \overline{\Omega})
=- \frac{\tilde{Q}(\p_i \Omega, \overline{\p_j \Omega})}{\tilde{Q}(\Omega, \overline{\Omega})}
+ \frac{\tilde{Q}(\p_i \Omega, \overline{\Omega})~ \tilde{Q}(\Omega, \overline{\p_j \Omega})}
{\tilde{Q}(\Omega, \overline{\Omega})^2},
\end{equation}
where $\omega_{WP}$ is the $2$-form associated to $g_{WP}$. In particular, $g_{WP}$ is K\"ahler and is
independent of the choice of $\Omega$. In fact, $g_{WP}$ is also independent of the choice of the polarization.
Next, we define
\be
K_i =- \p_i \log \tilde{Q}(\Omega, \overline{\Omega})=-\frac{\p_i \Omega, \overline{\Omega}}
{\tilde{Q}(\Omega, \overline{\Omega})}
\ee
and
\be
D_i \Omega= \p_i \Omega +K_i \Omega
\ee
for $1 \leq i \leq N$. And it is easy to check that $D_i \Omega$ is the projection of $\p_i \Omega$ into
$H^{n-1,1}$ with respect to the quadratic form $\tilde{Q}(\cdot, \cdot)$.
And if we denote the Christoffel
symbol of the Weil-Petersson metric by $\Gamma^k_{ij}$, it is easy to check that
\be
D_j D_i \Omega = \p_j D_i \Omega - \Gamma^k_{ij} D_k \Omega + K_jD_i \Omega,
\ee
is the projection of $\p_j D_j \Omega$ into $H^{n-2,2}$.

\subsection{Property of the Curvature Tensor}\label{cuvature}

\noindent To simplify the notation, we abstract the discussion by considering a variations of polarized Hodge structure
$H \longrightarrow S$ of weight n with $h^{n,0}=1$ and a smooth base $S$. Also, we always assume that
it is effectively parametrized in the sense that the infinitesimal period map
\begin{equation}
\Phi_{*, s}: T_s S \longrightarrow \mbox{Hom} (H^{n,0}, H^{n-1,1})\oplus \mbox{Hom}(H^{n-1,1}, H^{n-2,2}) \oplus \cdots
\end{equation}
be injective in the first piece. Then the Weil-Petersson metric $g_{WP}$ on S is defined by formula (\ref{WP}).
In our abstract setting, instead of using $H_{pr}^{p,q}$ in the geometric case, we will write $H^{p,q}$ directly for simplicity.

\begin{thm}\label{LSS}
For a given effectively parametrized ploarized variation of
Hodge structure $H\rightarrow S$
of weight $n$ with $h^{n,0}=1$ and smooth $S$,
the Riemannian curvature of the Weil-Petersson metric $g_{WP}$ on $S$ satisfies:

\begin{enumerate}
\item
Its Riemannian curvature tensor is
\be
R_{i\overline{j}k\overline{l}}=g_{ij}g_{kl}+g_{il}g_{kj}-
\frac{\tilde{Q}(D_k D_i \Omega, \overline{D_l D_j \Omega})}{\tilde{Q}(\Omega, \overline{\Omega})}.
\ee

\item The covariant derivative of the Riemannian curvature tensor is
\be
\nabla_r R_{i \bar{j} k\bar{l}}&=& \frac{\tilde{Q}(\p_r \p_k \p_i \Omega,
 \overline{D_l D_j \Omega})}{\tilde{Q}(\Omega, \overline{\Omega})};\\
\nabla_{\bar{r}}R_{i\bar{j}k\bar{l}}&=& \frac{\tilde{Q}(D_k D_i \Omega,
\overline{\p_r \p_l \p_j \Omega})}{\tilde{Q}(\Omega, \overline{\Omega})}.
\ee
\end{enumerate}
\end{thm}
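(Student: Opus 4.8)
The plan is to reduce both formulas to repeated differentiation of the metric potential and then to let Griffiths transversality together with the Hodge--Riemann orthogonality relations collapse the resulting sums over Hodge types. Two structural facts are used throughout. First, under the Gauss--Manin connection a holomorphic derivative lowers the Hodge filtration by exactly one step, $\partial_r F^p\subseteq F^{p-1}$, while an antiholomorphic derivative preserves it; in particular the holomorphic section $\Omega$ satisfies $\partial_{\bar r}\Omega=0$, and hence $\partial_r\overline{\Omega}=0$ as well. Second, $\tilde{Q}$ is flat and pairs $H^{p,q}$ nontrivially only against $\overline{H^{p,q}}=H^{q,p}$, so in any pairing only matching Hodge components survive. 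I also use the normalization $g_{i\bar{j}}=-\partial_i\partial_{\bar{j}}\log\tilde{Q}(\Omega,\overline{\Omega})=-\tilde{Q}(D_i\Omega,\overline{D_j\Omega})/\tilde{Q}(\Omega,\overline{\Omega})$, which follows from the stated identity $\omega_{WP}=\mathrm{Ric}_{\tilde{Q}}(F^n)$ once the cross terms $\tilde{Q}(D_i\Omega,\overline{\Omega})=\tilde{Q}(\Omega,\overline{D_j\Omega})=0$ are discarded, and the fact that, by effective parametrization with $h^{n,0}=1$, the classes $\{D_p\Omega\}$ form a basis of $H^{n-1,1}$.

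For part (1) I would start from the Kähler curvature formula $R_{i\bar{j}k\bar{l}}=-\partial_k\partial_{\bar{l}}g_{i\bar{j}}+g^{p\bar{q}}(\partial_k g_{i\bar{q}})(\partial_{\bar{l}}g_{p\bar{j}})$ and substitute the expression for $g_{i\bar{j}}$ above. Differentiating once more and applying the product rule for the flat form $\tilde{Q}$, every derivative of $D_i\Omega$ splits into its $H^{n,0}$, $H^{n-1,1}$ and $H^{n-2,2}$ components. The $H^{n,0}$-component of $\partial_k D_i\Omega$ vanishes because $\partial_k\overline{\Omega}=0$; its $H^{n-1,1}$-component is exactly the connection combination $\Gamma^p_{ki}D_p\Omega-K_k D_i\Omega$; and its $H^{n-2,2}$-component is $D_kD_i\Omega$ by definition. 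Feeding these into the curvature formula, the contributions of the $H^{n,0}$ and $H^{n-1,1}$ parts reassemble, via the metric identity, into $g_{ij}g_{kl}+g_{il}g_{kj}$, while the genuinely transverse $H^{n-2,2}$ part contributes $-\tilde{Q}(D_kD_i\Omega,\overline{D_lD_j\Omega})/\tilde{Q}(\Omega,\overline{\Omega})$, giving the stated formula.

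For part (2) the decisive simplification is metric compatibility $\nabla g=0$: covariantly differentiating the purely metric terms $g_{ij}g_{kl}+g_{il}g_{kj}$ gives zero, so only the last term of part (1) survives and $\nabla_r R_{i\bar{j}k\bar{l}}=-\nabla_r\!\big[\tilde{Q}(D_kD_i\Omega,\overline{D_lD_j\Omega})/\tilde{Q}(\Omega,\overline{\Omega})\big]$, with $\nabla_r=\partial_r-\Gamma^p_{ri}(\,\cdot\,)_{p\bar{j}k\bar{l}}-\Gamma^p_{rk}(\,\cdot\,)_{i\bar{j}p\bar{l}}$ acting on the two holomorphic indices. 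I would then expand $\partial_r$ through $\tilde{Q}$ and sort the output by Hodge type. The point is that the explicit Christoffel terms produced by $\nabla_r$, the logarithmic correction $-K_r$ coming from $\partial_r\log\tilde{Q}(\Omega,\overline{\Omega})$, and the antiholomorphic contribution $\tilde{Q}(D_kD_i\Omega,\partial_r\overline{D_lD_j\Omega})$ together reconstitute precisely the connection terms that distinguish the iterated covariant derivative from the bare holomorphic derivative. After this repackaging the surviving expression is the pairing against $\partial_r\partial_k\partial_i\Omega$; since $\overline{D_lD_j\Omega}\in H^{2,n-2}$ only the $H^{n-2,2}$-component of this third derivative contributes, and one obtains $\nabla_r R_{i\bar{j}k\bar{l}}=\tilde{Q}(\partial_r\partial_k\partial_i\Omega,\overline{D_lD_j\Omega})/\tilde{Q}(\Omega,\overline{\Omega})$. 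The conjugate formula for $\nabla_{\bar{r}}R$ then follows by complex conjugation together with the Kähler symmetry $\overline{R_{i\bar{j}k\bar{l}}}=R_{j\bar{i}l\bar{k}}$.

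\textbf{Main obstacle.} The hard part will be the bookkeeping in part (2): verifying that the Christoffel corrections, the term $-K_r$ from the denominator, and the antiholomorphic derivative $\partial_r\overline{D_lD_j\Omega}$ cancel and reorganize into exactly the $H^{n-2,2}$-component of $\partial_r\partial_k\partial_i\Omega$, leaving a single clean pairing. This requires pinning down all Hodge-type components of $\partial_r(D_kD_i\Omega)$ and of $\partial_r\overline{D_lD_j\Omega}$, and systematically invoking $\partial_{\bar r}\Omega=0$ and the orthogonality of distinct Hodge pieces to discard or telescope the remaining terms; once this cancellation is organized, the transversality-plus-orthogonality mechanism delivers both identities.
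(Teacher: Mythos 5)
Your proposal takes a genuinely different route from the paper, and while part (1) is sound, part (2) leaves its decisive step unexecuted. The paper does not differentiate covariantly at all: it expands the canonical family $\Omega^{c}(t)=e^{\Phi(t)}\lrcorner\Omega$ of Theorem \ref{canonicalform} in the flat affine coordinate $t$ and observes that the orthogonality relations $\tilde{Q}(a_0,a_I)=\tilde{Q}(a_i,a_I)=0$ for $|I|\geq 2$ (which encode property $3)$ of Theorem \ref{BTT}, that $\varphi_I\lrcorner\Omega$ is $\partial$-exact) force the Weil--Petersson potential to take the shape $q(t)=1-\sum_i t_i\bar{t}_i+O(|t|^4)$, with no cubic terms. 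Hence $t$ is a geodesic normal coordinate at the base point, $\Gamma^k_{ij}(0)=0$, and both $R_{i\bar{j}k\bar{l}}(0)$ and the derivatives $\nabla_r R_{i\bar{j}k\bar{l}}(0)$, $\nabla_{\bar{r}}R_{i\bar{j}k\bar{l}}(0)$ are literally Taylor coefficients of $g_{k\bar{l}}$, read off as $q_{ik,\overline{jl}}$, $q_{ikr,\overline{jl}}$ and $q_{ik,\overline{jlr}}$ and then rewritten in tensor form; all of the bookkeeping you describe is thereby avoided. Your part (1), by contrast, is the classical covariant derivation (Strominger; cf.\ \cite{W}), and as sketched it works: your type decomposition of $\partial_k D_i\Omega$ agrees with the paper's definition of $D_kD_i\Omega$, and the identity $\partial_{\bar{k}}D_j\Omega=g_{j\bar{k}}\Omega$, together with $\tilde{Q}(D_i\Omega,\overline{\Omega})=0$, disposes of the antiholomorphic contributions.

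In part (2), however, two points need repair. First, the term $\tilde{Q}(D_kD_i\Omega,\partial_r\overline{D_lD_j\Omega})$ does not ``reconstitute connection terms'': it vanishes identically by type, because differentiating $D_lD_j\Omega=\partial_lD_j\Omega-\Gamma^m_{lj}D_m\Omega+K_lD_j\Omega$ with $\partial_{\bar{r}}$ and using $\partial_{\bar{r}}D_j\Omega=g_{j\bar{r}}\Omega$ shows $\partial_{\bar{r}}(D_lD_j\Omega)\in H^{n,0}\oplus H^{n-1,1}$, whose conjugate pairs to zero against $H^{n-2,2}$. Second, and more seriously, the cancellation you defer as ``bookkeeping'' is not a term-by-term repackaging of the connection terms supplied by $\nabla_r$: expanding $\partial_r(D_kD_i\Omega)$ produces corrections carrying $K_i$, $K_k$, $\Gamma^p_{ki}$ and their $r$-derivatives --- coefficients indexed by $i$ and $k$, not $r$ --- and these cannot be absorbed individually by the terms $-\Gamma^p_{ri}$, $-\Gamma^p_{rk}$ and $K_r$ that the covariant derivative provides. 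The identity only emerges after exploiting the symmetry of $\nabla_rR_{i\bar{j}k\bar{l}}$ in $(i,k,r)$, or, far more efficiently, after evaluating at the center of normal coordinates, where $K_r=0$ and $\Gamma=0$, so that every correction term is of type $H^{n,0}\oplus H^{n-1,1}$ and is annihilated by the pairing against $\overline{D_lD_j\Omega}$. Since the statement is tensorial, verifying it at one point in a well-chosen coordinate suffices --- and that is exactly the shortcut the paper institutionalizes by using the flat affine coordinate at every base point. With that one addition your plan closes; as written, the central cancellation of part (2) remains asserted rather than proved.
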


\noindent The main idea of the proof is that, when we use the canonical family of $(n,0)$-classes constructed in Corollary
\ref{canonicalclass} to express the Weil-Petersson metric, the flat affine coordinate
 $t= (t_1, \cdots, t_N )$ is normal at the point $t=0$.
Since the problem is local, we may assume that $S$
is a disk in $\mathbb{C}^N$, where $N=\mbox{dim}~ H^{n-1,1}$, around $t=0$. The first part of the above theorem for the curvature formula of the Weil-Petersson metric is due to Strominger. See \cite{W}.

\begin{proof}
Let $\Omega^c(t)$ be the canonical family of holomorphic $(n,0)$-forms
 constructed in Theorem \ref{canonicalform}, so we have
\be
\Omega^c(t) &=&\Omega+ \sum^N_{i=1} t_i \varphi_i \lrcorner \Omega +
\sum_{\mid I\mid \geq 2} t^I ~\varphi_I \lrcorner \Omega
+ \sum_{k\geq 2} \wedge^k\Phi(t)\lrcorner \Omega\\
&=&  \Omega +\sum_{i=1}^N t_i \varphi_i \lrcorner\Omega + \frac{1}{2!} \sum_{i,j=1}^N t_i t_j
(\varphi_i \lrcorner\varphi_j \lrcorner \Omega+ \varphi_{ij} \lrcorner \Omega)\\
&&+ \frac{1}{3!} \sum_{i, j< k}t_i t_j t_k (\varphi_i \lrcorner \varphi_j \lrcorner \varphi_k \lrcorner \Omega
+\varphi_i \lrcorner \varphi_{jk} \lrcorner \Omega + \varphi_{ijk} \lrcorner \Omega)+ O(\mid t \mid^4)\\
&=&a_0 +\sum_{i=1}^N a_i t_i + \cdots +\sum_{\mid I \mid =k}a_I t^I + \cdots.
\ee
And the coefficients satisfy $\tilde{Q}(a_0, a_0)=1,  \tilde{Q}(a_i, a_j)=-\delta_{ij}$ and
$\tilde{Q}(a_0, a_i)= \tilde{Q}(a_0, a_I)= \tilde{Q}(a_i, a_I)=0$ for $|I| \geq 2$.
For multi-indices $I$ and $J$,
we set $q_{I,\bar{J}}:=\tilde{Q}(a_I, \bar{a}_J)$. Then we have
\begin{eqnarray*}
q(t):&=&\tilde{Q}(\Omega^c(t), \bar{\Omega^c(t)})\\
&=& 1-\sum_i t_i \bar{t_i} + \sum_{i,j,k,l}
\frac{1}{2!2!}q_{ik,\bar{jl}}t_i t_k \bar{t_j} \bar{t_l} \\&&
+ \sum_{i,j,k,l,r}\frac{1}{2!3!}q_{ik,\bar{jlr}}t_i t_k \bar{t_j t_l t_r}
+\sum_{i,j,k,l,r} \frac{1}{2!3!}q_{ikr, \bar{jl}}t_i t_k t_r \bar{t_j t_k}
+ O(t^6),
\end{eqnarray*}
where
\be
q_{ik, \bar{jl}}&=&\tilde{Q}(\varphi_i \lrcorner \varphi_k \lrcorner \Omega,~~
\varphi_j \lrcorner \varphi_l \lrcorner \Omega),\\
q_{ik, \overline{jlr}}&=& \frac{1}{3}\tilde{Q}(\varphi_i \lrcorner \varphi_k \lrcorner \Omega,~~
\varphi_j \lrcorner \varphi_{lr} \lrcorner \Omega
+ \varphi_l \lrcorner \varphi_{jr}\lrcorner\Omega
+\varphi_r \lrcorner \varphi_{jl} \lrcorner \Omega),\\
q_{ikr, \overline{jl}}&=& \frac{1}{3} \tilde{Q}(\varphi_i \lrcorner \varphi_{kr}\lrcorner \Omega
+ \varphi_k \lrcorner \varphi_{ir} \lrcorner \Omega
+\varphi_r \lrcorner \varphi_{ik}\lrcorner \Omega,~~
\varphi_j \lrcorner \varphi_l \lrcorner \Omega).
\ee
Thus, the Weil-Petersson metric can be expressed as
\begin{eqnarray*}
g_{k\bar{l}}&=&-\partial_k \partial_{\bar{l}}~\log ~q
=q^{-2}(\partial_k q \partial_{\bar{l}}q- q\partial_k \partial_{\bar{l}}q)\\
&=&(1+2\sum_i t_i \bar{t_i}+\cdots)[t_l t_{\bar{k}}-(1-\sum_i t_i \bar{t_i})
(-\delta_{kl}+\sum_{i,j}q_{ik,jl}t_i \bar{t_j}\\&&
+ \sum_{i,j,r} q_{ik, \overline{jlr}}t_i \overline{t_j t_r}
+ \sum_{i,j,r}  q_{ikr, \overline{jl}}t_i t_r \overline{t_l}+\cdots)]\\
&=&\delta_{kl}+\delta_{kl}\sum_i t_i \bar{t_i} + t_l\bar{t_k}
-\sum_{i,j}q_{ik,jl}t_i \bar{t_j}
+\sum_{i,j,r}q_{ik, \overline{jlr}}t_i \overline{t_j t_r}
+ \sum_{i,j,r} q_{ikr, \overline{jl}}t_i t_r \overline{t_l}+\cdots .
\end{eqnarray*}

\noindent As a result, the Weil-Petersson
metric $g$ is already in its geodesic normal form at $t=0$, so
the Christoffel symbols at point $t=0$ is zero, i.e. $\Gamma_{ij}^k(0)=0$.
So the full curvature tensor at $t=0$ is given by
\be
R_{i\overline{j}k\overline{l}}(0)=\frac{\p^2 g_{k\overline{l}}}{\p t_i \p \overline{t_j}}(0)
=\delta_{ij} \delta_{kl} + \delta_{il}\delta_{kj} +q_{ik, \overline{jl}}.
\ee
Rewrite this in its tensor form then gives the formula in the theorem.

\noindent By using the well-known formula:
\begin{eqnarray*}
\bigtriangledown_r R_{i\bar{j}k\bar{l}}=\frac{\partial}{\partial t_r} R_{i\bar{j}k\bar{l}}
- \Gamma^q_{ri} R_{q\bar{j}k\bar{l}}-\Gamma^q_{rk} R_{i\bar{j}q \bar{l}},\\
\bigtriangledown_{\bar{r}} R_{i\bar{j}k\bar{l}}=\frac{\partial}{\partial \bar{t_r}} R_{i\bar{j}k\bar{l}}
- \overline{\Gamma^q_{rj}} R_{i \bar{q}k\bar{l}}-\overline{\Gamma^q_{rl}} R_{i\bar{j}k \bar{q}},
\end{eqnarray*}
at the point $t=0$, we have
\begin{eqnarray*}
\bigtriangledown_r R_{i\bar{j}k\bar{l}}(0)=\frac{\partial}{\partial t_r} R_{i\bar{j}k\bar{l}}(0);~~~
\bigtriangledown_{\bar{r}} R_{i\bar{j}k\bar{l}}(0)=\frac{\partial}{\partial \bar{t_r}} R_{i\bar{j}k\bar{l}}(0).
\end{eqnarray*}
as $\Gamma_{ij}^k(0)=0$ for any $1 \leq i,j,k \leq N$. And,  from the formula of Riemannian curvature for K\"ahler manifold
\begin{equation}
R_{i\bar{j}k \bar{l}}=\frac{\partial^2 g_{i\bar{j}}}{\partial t_k \partial \overline{t_l}}- g^{p\bar{q}}
\frac{\partial g_{i \bar{q}}}{\partial t_k} \frac{\partial g_{j \bar{k}}}{\partial \overline{t_l}},
\end{equation}
each term of $\nabla_r R_{i \bar{j}k\bar{l}}$ or $\nabla_{\bar{r}} R_{i \bar{j}k\bar{l}}$
 includes degree 1 term as a factor, except
$\frac{\partial^3 g_{k\bar{l}}}
{\partial t_i \partial \overline{t_j}\partial t_r}$
, $\frac{\partial^3 g_{k\bar{l}}}{\partial t_i \partial \overline{t_j}\partial \overline{t_r}},$
thus it is zero at $t=0$ from the expression of $g_{k\overline{l}}$. So we have
\be
\nabla_r R_{i \bar{j}k\bar{l}}(0)=\frac{\partial^3 g_{k\bar{l}}}
{\partial t_i \partial \overline{t_j}\partial t_r}(0)= q_{ikr, \overline{jl}},\\
\nabla_{\bar{r}} R_{i \bar{j}k\bar{l}}(0)
=\frac{\partial^3 g_{k\bar{l}}}{\partial t_i \partial \overline{t_j}\partial \overline{t_r}}(0)
=q_{ik, \overline{jlr}}.
\ee
Rewrite this in its tensor form, we get the formula.
\end{proof}

\subsection{Quantum Correction and the Weil-Petersson Metric}\label{wpsymmetric}

\noindent In this section, we consider the locally Hermitian symmetric property of the Teichm\"uller
space $\mathcal{T}$ of polarized and marked Calabi-Yau manifolds.

\begin{theorem}
Let $\mathcal{T}$ be the Teichm\"uller space of polarized and marked Calabi-Yau manifolds.
If there is no weak quantum correction
at any point $p \in \mathcal{T}$, i.e., $[\Xi(t)]_1=0$,
 then $\mathcal{T}$
is a locally Hermitian symmetric space with the Weil-Petersson metric.
\end{theorem}
\bproof
Fix $p \in \mathcal{T}$, let $X$ be the corresponding fiber in the versal family $\mathcal{X} \rightarrow \mathcal{T}$.
If there is no weak quantum correction at point $p \in \mathcal{T}$,
i.e., $$[\Xi(t)]_1= \sum_{i+j+k=1} t_i t_j t_k [\phi_i \lrcorner \phi_{jk} \lrcorner \Omega]=0,$$
then $[\phi_i \lrcorner \phi_{jk} \lrcorner \Omega]+ [\phi_j \lrcorner \phi_{ik} \lrcorner \Omega]
+ [\phi_k \lrcorner \phi_{ij} \lrcorner \Omega]=0$ for any $1 \leq i, j,k \leq N$. So, from Theorem \ref{LSS},
we have
\be
\nabla_r R_{i \bar{j}k\bar{l}}(p)=\frac{\partial^3 g_{k\bar{l}}}
{\partial t_i \partial \overline{t_j}\partial t_r}(p)= q_{ikr, \overline{jl}}
=\frac{1}{3} \tilde{Q}(\varphi_i \lrcorner \varphi_{kr}\lrcorner \Omega
+ \varphi_k \lrcorner \varphi_{ir} \lrcorner \Omega
+\varphi_r \lrcorner \varphi_{ik}\lrcorner \Omega,~~
\varphi_j \lrcorner \varphi_l \lrcorner \Omega)=0,\\
\nabla_{\bar{r}} R_{i \bar{j}k\bar{l}}(p)
=\frac{\partial^3 g_{k\bar{l}}}{\partial t_i \partial \overline{t_j}\partial \overline{t_r}}(p)
=q_{ik, \overline{jlr}}= \frac{1}{3}\tilde{Q}(\varphi_i \lrcorner \varphi_k \lrcorner \Omega,~~
\varphi_j \lrcorner \varphi_{lr} \lrcorner \Omega
+ \varphi_l \lrcorner \varphi_{jr}\lrcorner\Omega
+\varphi_r \lrcorner \varphi_{jl} \lrcorner \Omega)=0,
\ee
i.e., $\nabla R =0$. So $\mathcal{T}$ is a locally Hermitian symmetric space by Lemma \ref{mainlemma}.
\eproof

\noindent On the other hand, by the definition of locally Hermitian symmetric spaces, the following
condition can also guarantee the locally Hermitian symmetric property for the Teichm\"uller space $\mathcal{T}$.
\btheorem \label{finite}
Let $\mathcal{T}$ be the Teichm\"uller space of polarized and marked Calabi-Yau manifolds
and $\Omega^c(t)$ the canonical form constructed in Theorem \ref{canonicalform}. If the Weil-Petersson
potential $\tilde{Q}(\Omega^c_t, \overline{\Omega^c_t})$ only has finite terms, i.e., a polynormial in
terms of the flat affine coordinate $t$,  then $\mathcal{T}$
is a locally Hermitian symmetric space with the Weil-Petersson metric.
Furthermore, if $\mathcal{T}$ is complete, then $\mathcal{T}$ is a globally Hermitian
symmetric space with the Weil-Petersson metric.
\etheorem

\bproof
Because of Proposition \ref{cur}, to prove a K\"ahler manifold is a locally Hermitian symmetric space,
we only need to show its curvature tensor satisfies $\nabla^m R=0$ for some positive integer $m$.
If the Weil-Petersson potential $\tilde{Q}(\Omega(t), \overline{\Omega}(t))$
only has finite terms, i.e., it is a polynomial of the flat affine coordinate $t=(t_1,t_2,\cdots, t_N)$.
Then, in the flat affine coordinate $t$,  the coefficients of the Weil-Petersson metric
and its curvature tensor
\be
g_{k\overline{l}}&=&-\p_k \p_{\overline{l}} \log \tilde{Q}(\Omega^c(t), \overline{\Omega^c(t)})\\
R_{i\overline{j}k\overline{l}}&=&\frac{\p^2 g_{i\overline{j}}}{\p t_k \p \overline{t_l}}
-g^{p\overline{q}} \frac{\p g_{i\overline{q}}}{\p t_k}\frac{\p g_{p\overline{j}}}{\p \overline{t_l}}.
\ee
is a polynomial of variable $(t_1,t_2,\cdots,t_N, \overline{t_1}, \overline{t_2}, \cdots, \overline{t_N})$.

\noindent On the other hand, from the proof of Theorem \ref{LSS},
the flat affine coordinate $t$ is a normal coordinate at the point $t=0$.
So we have the Christoffel symbols at the point $t=0$ vanish, i.e., $\Gamma^k_{ij}(0)=0$.
Thus, at the point $t=0$, the covariant derivative $\nabla_p T= \p_p T$ for any $(0,m)$-tensor $T$.
Therefore, for a large enough integer $m$, we have $\nabla^m R(0)=0$. Thus the Teichm\"uller space $\mathcal{T}$
 is a locally Hermitian symmetric space
with the Weil-Petersson metric.
\eproof

\noindent In particular, we have the following corollary,
\begin{corollary}\label{corfinite}
If the canonical family of $(n,0)$-classes $[\Omega^c(t)]$ constructed in Corollary \ref{canonicalclass}
has finite terms, i.e., a polynormial in
terms of the flat affine coordinate $t$,  then $\mathcal{T}$
is a locally Hermitian symmetric space with the Weil-Petersson metric.
\begin{proof}
The proof follows directly from Theorem \ref{finite}.
\end{proof}
\end{corollary}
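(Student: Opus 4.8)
The plan is to derive the corollary as an immediate specialization of Theorem \ref{finite}: the only thing that must genuinely be checked is that finiteness of the cohomology-valued family $[\Omega^c(t)]$ forces the Weil-Petersson potential $\tilde{Q}(\Omega^c(t),\overline{\Omega^c(t)})$ to be a polynomial in the flat affine coordinates. The key structural fact is that $\tilde{Q}(\cdot,\cdot)=(\sqrt{-1})^n Q(\cdot,\cdot)$ is built from the topological intersection pairing, so it factors through de Rham cohomology; via the marking on $\mathcal{T}$, which trivializes the underlying local system, the class $[\Omega^c(t)]$ lives in the \emph{fixed} vector space $H^n(X,\mathbb{C})$ on which $\tilde{Q}$ is a constant, coordinate-independent bilinear form.

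First I would use Corollary \ref{canonicalclass} together with the standing hypothesis to write, for some integer $d$,
$$[\Omega^c(t)] = \sum_{|I|\le d} c_I\, t^I, \qquad c_I \in H^n(X,\mathbb{C}),$$
where the classes $c_I$ are independent of $t$. Expanding the potential by the bilinearity of $\tilde{Q}$, and using that it depends on $\Omega^c(t)$ only through its cohomology class, then yields
$$\tilde{Q}(\Omega^c(t),\overline{\Omega^c(t)}) = \sum_{|I|\le d,\; |J|\le d} \tilde{Q}(c_I,\bar{c}_J)\, t^I \bar{t}^J,$$
which is manifestly a polynomial in $(t_1,\dots,t_N,\bar{t}_1,\dots,\bar{t}_N)$. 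This is precisely the hypothesis required by Theorem \ref{finite}, so invoking that theorem gives at once that $\mathcal{T}$ is a locally Hermitian symmetric space with the Weil-Petersson metric.

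The only step carrying real content, as opposed to bookkeeping, is the assertion that $\tilde{Q}(\Omega^c(t),\overline{\Omega^c(t)})$ depends on $\Omega^c(t)$ only through its class $[\Omega^c(t)]$ and not on the chosen representative form. This is exactly where the topological nature of the intersection product enters, and it is what converts the hypothesis ``$[\Omega^c(t)]$ is a polynomial in cohomology'' into ``the potential is a polynomial in $t,\bar{t}$''. Since this is standard for period pairings and every remaining step is a direct appeal to Theorem \ref{finite}, I expect no serious obstacle: the corollary really is a one-line consequence once the cohomological nature of $\tilde{Q}$ is made explicit.
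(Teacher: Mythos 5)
Your proposal is correct and follows essentially the same route as the paper, whose proof is just the one-line appeal to Theorem \ref{finite}. You in fact supply the one bridging detail the paper leaves implicit --- that since $\Omega^c(t)$ and its conjugate are closed, the potential $\tilde{Q}(\Omega^c(t),\overline{\Omega^c(t)})$ depends only on the class $[\Omega^c(t)]\in H^n(X,\mathbb{C})$, so polynomiality of the class expansion forces polynomiality of the potential in $(t,\bar{t})$ --- and this is exactly the intended reading.
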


\section{Quantum Correction and Calabi-Yau Threefolds}\label{qc and hodge}
In Section \ref{period domain}, we review some basic properties of the period domain from Lie group and Lie algebra point of view.
In Section \ref{hodgesymmetric}, for Calabi-Yau threefolds, we show that no strong quantum correction is equivalent to that
the image $\Phi(\mathcal{T})$ of the Teichm\"uller space $\mathcal{T}$ under the period map is an open submanifold of
a globally Hermitian symmetric space with the same dimension as $\mathcal{T}$.
\subsection{Period Domain}\label{period domain}
Let us briefly recall some properties of the period domain from Lie group and Lie algebra point of view. All results in this section are well-known to the experts in the subject. The purpose to give details is to fix notations. One may either skip this section or refer to \cite{GS} and \cite{Sch} for most of the details.

\noindent A pair $(X,L)$ consisting of a Calabi--Yau manifold $X$ of complex dimension $n$ with $n\geq 3$ and an ample
line bundle $L$ over $X$ is called a {polarized Calabi--Yau
manifold}. By abuse of notation, the Chern class of $L$ will also be
denoted by $L$ and thus $L\in H^2(X,\mathbb{Z})$.
The Poincar\'e bilinear form $Q$ on $H_{pr}^n(X,{\mathbb{Q}})$ is
defined by
\begin{equation*}
Q(u,v)=(-1)^{\frac{n(n-1)}{2}}\int_X u\wedge v
\end{equation*}
for any $d$-closed $n$-forms $u,v$ on $X$.
Furthermore, $Q $ is nondegenerate and can be extended to
$H_{pr}^n(X,{\mathbb{C}})$ bilinearly.
Let $f^k=\sum_{i=k}^nh^{i,n-i}$ and
$F^k=F^k(X)=H_{pr}^{n,0}(X)\oplus\cdots\oplus H_{pr}^{k,n-k}(X)$,
from which we have the decreasing filtration
$H_{pr}^n(X,{\mathbb{C}})=F^0\supset\cdots\supset F^n.$ We know that
\begin{eqnarray}
&\dim_{\mathbb{C}} F^k=f^k,  \label{cl45}\\
H^n_{pr}(X,{\mathbb{C}})&=F^{k}\oplus \bar{F^{n-k+1}}, \quad\text{and}\quad
H_{pr}^{k,n-k}(X)=F^k\cap\bar{F^{n-k}}.
\end{eqnarray}
In terms of the Hodge filtration, then the Hodge-Riemann relations are
\begin{eqnarray}
Q\left ( F^k,F^{n-k+1}\right )=0, \quad\text{and}\quad\label{cl50}\\
Q\left ( Cv,\bar v\right )>0 \quad\text{if}\quad v\ne 0,\label{cl60}
\end{eqnarray}
where $C$ is the Weil operator given by $Cv=\left (\sqrt{-1}\right
)^{2k-n}v$ for $v\in H_{pr}^{k,n-k}(X)$. The period domain $D$
for polarized Hodge structures with data \eqref{cl45} is the space
of all such Hodge filtrations
\begin{equation*}
D=\left \{ F^n\subset\cdots\subset F^0=H_{pr}^n(X,{\mathbb{C}})\mid %
\eqref{cl45}, \eqref{cl50} \text{ and } \eqref{cl60} \text{ hold}
\right \}.
\end{equation*}
The compact dual $\check D$ of $D$ is
\begin{equation*}
\check D=\left \{ F^n\subset\cdots\subset F^0=H_{pr}^n(X,{\mathbb{C}})\mid %
\eqref{cl45} \text{ and } \eqref{cl50} \text{ hold} \right \}.
\end{equation*}
The period domain $D\subseteq \check D$ is an open subset.
\noindent Let us introduce the notion of an adapted basis for the given Hodge decomposition or the Hodge filtration.
For any $p\in \mathcal{T}$ and $f^k=\dim F^k_p$ for any $0\leq k\leq n$,
We call a basis
\begin{align*}
\zeta=\{\zeta_0, \zeta_1, \cdots, \zeta_N, \cdots, \zeta_{f^{k+1}}, \cdots, \zeta_{f^k-1}, \cdots, \zeta_{f^2}, \cdots, \zeta_{f^1-0}, \zeta_{f^0-1}\}
\end{align*}
of $H^n(X, \mathbb{C})$ an \textit{adapted basis for the given filtration}
\begin{align*}
F^n\subseteq F^{n-1}\subseteq\cdots\subseteq F^0
\end{align*}
if it satisfies $F^{k}=\text{Span}_{\mathbb{C}}\{\zeta_0, \cdots, \zeta_{f^k-1}\}$ with $\text{dim}_{\mathbb{C}}F^{k}=f^k$.

\noindent The orthogonal group of the bilinear form $Q$ in the definition of
Hodge structure is a linear algebraic group, defined over $\mathbb{Q}$.
Let us simply denote $H_{\mathbb{C}}=H^n(X, \mathbb{C})$ and $H_{\mathbb{R}}=H^n(X, \mathbb{R})$.
The group of the $\mathbb{C}$-rational points is
\begin{align*}
G_{\mathbb{C}}=\{ g\in GL(H_{\mathbb{C}})|~ Q(gu, gv)=Q(u, v) \text{ for all } u, v\in H_{\mathbb{C}}\},
\end{align*}
which acts on $\check{D}$ transitively. The group of real points in $G_{\mathbb{C}}$ is
\begin{align*}
G_{\mathbb{R}}=\{ g\in GL(H_{\mathbb{R}})|~ Q(gu, gv)=Q(u, v) \text{ for all } u, v\in H_{\mathbb{R}}\},
\end{align*}
which acts transitively on $D$ as well.

\noindent Consider the period map $\Phi:\, \mathcal{T}\rightarrow D$.
Fix a point $p\in \mathcal{T}$ with the image $O:=\Phi(p)=\{F^n_p\subset \cdots\subset F^{0}_p\}\in D$.
The points $p\in \mathcal{T}$ and $O \in D$ may be referred as the base points or the reference points.
A linear transformation $g\in G_{\mathbb{C}}$ preserves the base point if and only if $gF^k_p=F^k_p$ for each $k$.
Thus it gives the identification
\begin{align*}
\check{D}\simeq G_\mathbb{C}/B\quad\text{with}\quad B=\{ g\in G_\mathbb{C}|~ gF^k_p=F^k_p, \text{ for any } k\}.
\end{align*}
Similarly, one obtains an analogous identification
\begin{align*}
D\simeq G_\mathbb{R}/V\hookrightarrow \check{D}\quad\text{with}\quad V=G_\mathbb{R}\cap B,
\end{align*}
where the embedding corresponds to the inclusion $
G_\mathbb{R}/V=G_{\mathbb{R}}/G_\mathbb{R}\cap B\subseteq G_\mathbb{C}/B.$
The Lie algebra $\mathfrak{g}$ of the complex Lie group $G_{\mathbb{C}}$ can be described as
\begin{align*}
\mathfrak{g}&=\{X\in \text{End}(H_\mathbb{C})|~ Q(Xu, v)+Q(u, Xv)=0, \text{ for all } u, v\in H_\mathbb{C}\}.
\end{align*}
It is a simple complex Lie algebra, which contains
$\mathfrak{g}_0=\{X\in \mathfrak{g}|~ XH_{\mathbb{R}}\subseteq H_\mathbb{R}\}$
as a real form, i.e. $\mathfrak{g}=\mathfrak{g}_0\oplus i \mathfrak{g}_0.$ With the inclusion $G_{\mathbb{R}}\subseteq G_{\mathbb{C}}$, $\mathfrak{g}_0$ becomes Lie algebra of $G_{\mathbb{R}}$. One observes that the reference Hodge structure $\{H^{k, n-k}_p\}_{k=0}^n$ of $H^n(M,{\mathbb{C}})$ induces a Hodge structure of weight zero on
$\text{End}(H^n(M, {\mathbb{C}})),$ namely,
\begin{align*}
\mathfrak{g}=\bigoplus_{k\in \mathbb{Z}} \mathfrak{g}^{k, -k}\quad\text{with}\quad\mathfrak{g}^{k, -k}=\{X\in \mathfrak{g}|XH_p^{r, n-r}\subseteq H_p^{r+k, n-r-k}\}.
\end{align*}
Since the Lie algebra $\mathfrak{b}$ of $B$ consists of those $X\in \mathfrak{g}$ that preserves the reference Hodge filtration $\{F_p^n\subset\cdots\subset F^0_p\}$, one thus has
\begin{align*}
 \mathfrak{b}=\bigoplus_{k\geq 0} \mathfrak{g}^{k, -k}.
\end{align*}
The Lie algebra $\mathfrak{v}_0$ of $V$ is
$\mathfrak{v}_0=\mathfrak{g}_0\cap \mathfrak{b}=\mathfrak{g}_0\cap \mathfrak{b}\cap\bar{\mathfrak{b}}=\mathfrak{g}_0\cap \mathfrak{g}^{0, 0}.$
With the above isomorphisms, the holomorphic tangent space of $\check{D}$ at the base point is naturally isomorphic to $\mathfrak{g}/\mathfrak{b}$.

\noindent Let us consider the nilpotent Lie subalgebra $\mathfrak{n}_+:=\oplus_{k\geq 1}\mathfrak{g}^{-k,k}$. Then one gets the holomorphic isomorphism $\mathfrak{g}/\mathfrak{b}\cong \mathfrak{n}_+$.
Since $D$ is an open set in $\check{D}$, we have the following relation:
\begin{align}\label{n+iso}
T^{1,0}_{O, h}D= T^{1, 0}_{O, h}\check{D}\cong\mathfrak{b}\oplus \mathfrak{g}^{-1, 1}/\mathfrak{b}\hookrightarrow \mathfrak{g}/\mathfrak{b}\cong \mathfrak{n}_+.
\end{align}
We define the unipotent group $N_+=\exp(\mathfrak{n}_+)$.

\begin{remark}\label{N+inD}
With a fixed base point, we can identify $N_+$ with its unipotent orbit in $\check{D}$ by identifying
an element $c\in N_+$ with $[c]=cB$ in $\check{D}$; that is, $N_+=N_+(\text{ base point })\cong N_+B/B\subseteq\check{D}.$
In particular, when the base point $O$ is in $D$, we have $N_+\cap D\subseteq D$.
We can also identify a point $\Phi(p)=\{ F^n_p\subseteq F^{n-1}_p\subseteq \cdots \subseteq F^{0}_p\}\in D$
 with any fixed adapted basis of the corresponding Hodge filtration, we have matrix representations of elements in the above Lie groups and Lie algebras. For example, elements in $N_+$ can be realized as nonsingular block upper triangular matrices with identity blocks in the diagonal; elements in $B$ can be realized as nonsingular block lower triangular matrices.
\end{remark}

\subsection{Quantum Correction and Calabi-Yau Threefolds} \label{hodgesymmetric}

For any point $p\in \mathcal{T}$, let $(X_p, L)$ be the corresponding fiber in the versal
family $\mathcal{X} \rightarrow \mathcal{T}$, which is a polarized and marked
Calabi--Yau manifold. The period map from $\mathcal{T}$ to $D$
 is defined by
assigning each point $p\in \mathcal{T}$ the Hodge structure on $X_p$, that is
\begin{align*}
\Phi: \mathcal{T} \rightarrow D, \quad\quad p\mapsto\Phi(p)=\{F^n(X_p)\subset\cdots\subset F^0(X_p)\}.
\end{align*}

\noindent In \cite{GS}, Griffiths and Schmid studied the Hodge metric over the period domain $D$.
In particular, this Hodge theory is a complete homogenous metric. Consider the period map on the
Teichm\"uller space $\Phi: \mathcal{T} \rightarrow D$. By local Torelli theorem for Calabi-Yau
manifolds, we know that the period map $\Phi$ is locally injective. Thus it follows from \cite{GS} that
the pull-back of the Hodge metric over $D$ by $\Phi$ on $\mathcal{T}$ is well-defined K\"ahler metrics.
We will call the pull-back metric the Hodge metric over Teichm\"uller space $\mathcal{T}$, denoted by $h$.

\begin{theorem}
Let $\mathcal{T}$ be the Teichm\"uller space of polarized and marked Calabi-Yau threefolds
and $\Phi: \mathcal{T} \rightarrow D$ be the period map.
Then the following conditions are equivalent:
\begin{enumerate}
\item There is no strong quantum correction
 at any point $p \in \mathcal{T}$.
\item With respect to the Hodge metric,
the image $ \Phi(\mathcal{T})$ is an open submanifold of
a globally Hermitian symmetric space $W$ of the same dimension as $\mathcal{T}$,
which is also a totally geodesic submanifold of
the period domain $D$.
\end{enumerate}
\end{theorem}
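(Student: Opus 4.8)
The plan is to convert the cohomological statement ``no strong quantum correction'' into a Lie-theoretic statement about the image $\Phi(\mathcal T)\subseteq D$, using the nilpotent orbit description of $\check D$ from Section \ref{period domain}. The decisive feature of the threefold case is that $\Omega$ has type $(3,0)$, so $\phi_{i_1}\lrcorner\cdots\lrcorner\phi_{i_k}\lrcorner\Omega$ has type $(3-k,k)$ and vanishes for $k\geq 4$; hence the classic canonical family $\Omega^{cc}(t)=\exp(\sum_i t_i\phi_i)\lrcorner\Omega$ is a cohomology-valued polynomial of degree three in $t$. I regard each contraction $N_i:=\phi_i\lrcorner$ as an endomorphism of $H^3(X,\mathbb C)$; by the infinitesimal period relation $N_i\in\mathfrak g^{-1,1}$ and $\Phi_{*}(\partial_{t_i})=N_i$. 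A direct check on forms gives $\phi_i\lrcorner\phi_j\lrcorner\Omega=\phi_j\lrcorner\phi_i\lrcorner\Omega$, and the same symmetry on every Hodge piece shows that the $N_i$ commute as operators on $H^3$; thus $\mathfrak a:=\mathrm{span}_{\mathbb C}\{N_1,\dots,N_N\}$ is an abelian subspace of $\mathfrak g^{-1,1}$, and $\exp(\sum_i t_iN_i)[\Omega]=[\Omega^{cc}(t)]$, so by Remark \ref{N+inD} the point $\exp(\sum_i t_iN_i)\cdot O\in N_+\cdot O$ has $F^3=\mathbb C\,[\Omega^{cc}(t)]$.

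The analytic core is the equivalence: there is no strong quantum correction at every $p$ if and only if $\Phi(t)=\exp(\sum_i t_iN_i)\cdot O$ on each chart, i.e. $\Phi(\mathcal T)=\exp(\mathfrak a)\cdot O\cap D$. Indeed $[\Xi(t)]=[\Omega^c(t)]-[\Omega^{cc}(t)]$, so no strong quantum correction is exactly $[\Omega^c(t)]=[\Omega^{cc}(t)]$, equating the two $F^3$-lines. For a Calabi--Yau threefold the full filtration is generated from $F^3=\mathbb C\,[\Omega^c(t)]$ by successive derivatives ($F^2=F^3+\partial F^3$, $F^1=F^2+\partial F^2$), and because the $N_i$ commute the orbit $\exp(\sum_i t_iN_i)\cdot O$ is generated from its own $F^3=[\Omega^{cc}(t)]$ in the same way, since $\partial_{t_i}\big(\exp(\sum_j t_jN_j)[\Omega]\big)=N_i\exp(\sum_j t_jN_j)[\Omega]$. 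Hence equality of the $F^3$-lines propagates to equality of the whole Hodge filtrations, giving $\Phi(t)=\exp(\sum_i t_iN_i)\cdot O$; the converse is immediate by reading off $F^3$.

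It then remains to recognize $\exp(\mathfrak a)\cdot O\cap D$ as an open subset of a globally Hermitian symmetric space totally geodesic in $D$. Here I would exploit that $[\mathfrak a,\mathfrak a]=0=[\overline{\mathfrak a},\overline{\mathfrak a}]$, so the Lie subalgebra $\mathfrak g''$ generated by $\mathfrak a\oplus\overline{\mathfrak a}$ is contained in $\mathfrak g^{-1,1}\oplus\mathfrak g^{0,0}\oplus\mathfrak g^{1,-1}$; writing $\mathfrak a'=\mathfrak g''\cap\mathfrak g^{-1,1}$ and $\mathfrak k''=\mathfrak g''\cap\mathfrak g^{0,0}$, the grading $\mathfrak g''=\mathfrak a'\oplus\mathfrak k''\oplus\overline{\mathfrak a'}$ with abelian $\mathfrak a'$ is precisely the datum of a Lie algebra of Hermitian symmetric type. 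Its real form $G''=\exp(\mathfrak g''\cap\mathfrak g_0)$ acts on $W:=\exp(\mathfrak a')\cdot O\cap D$, whose symmetry at $O$ is induced by the Weil operator; $W$ is horizontal (since $\mathfrak a'\subseteq\mathfrak g^{-1,1}$), totally geodesic for the homogeneous Hodge metric, and complete as a closed totally geodesic submanifold of the complete $D$ of \cite{GS}, hence globally Hermitian symmetric. Thus condition (2) amounts to the two requirements $\mathfrak a'=\mathfrak a$ (so $\dim_{\mathbb C}W=N$) and $\Phi(\mathcal T)$ open in $W$; by the previous paragraph the latter is exactly no strong quantum correction, and once $\mathfrak a'=\mathfrak a$ the orbit $\exp(\mathfrak a)\cdot O\cap D$ is that very $W$.

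The main obstacle is the closure condition $\mathfrak a'=\mathfrak a$, equivalently that $\mathfrak a\oplus\overline{\mathfrak a}$ be a Lie triple system, $[[\mathfrak a,\overline{\mathfrak a}],\mathfrak a]\subseteq\mathfrak a$. Abelianness of $\mathfrak a$ is automatic and does not by itself force this; for $N\geq 2$ one must use the hypothesis that there is no strong quantum correction at \emph{every} $p\in\mathcal T$, together with local Torelli (so that $\Phi$ is an immersion of the $N$-dimensional $\mathcal T$) and the homogeneity of the Hodge metric, to rule out that the abelian horizontal orbit is a proper, non-totally-geodesic slice of the larger space $\exp(\mathfrak a')\cdot O$ --- that is, to show the corrections governed by the Yukawa tensor $[\phi_i\lrcorner\phi_j\lrcorner\phi_k\lrcorner\Omega]$ vanish to the order needed to pin down $\mathfrak a'=\mathfrak a$. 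For the reverse implication this comes for free: if (2) holds then $\dim W=N$ forces $\mathfrak a'=\mathfrak a$ at the outset, and openness of $\Phi(\mathcal T)$ in the totally geodesic $W$ returns $\Phi(t)=\exp(\sum_i t_iN_i)\cdot O$, hence $[\Xi(t)]=0$. One may compare the resulting picture with the discussion of horizontal Hermitian symmetric subdomains in \cite{FL}.
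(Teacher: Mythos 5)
You are on the paper's track in outline (abelian subalgebra $\mathfrak{a}\subset\mathfrak{g}^{-1,1}$ from the period map, the orbit $W=\exp(\mathfrak{a})\cdot O\cap D$, and the dictionary ``no strong quantum correction $\Leftrightarrow$ $\Phi$ locally agrees with the abelian orbit''), but there are two genuine gaps. First, $N_i:=\phi_i\lrcorner$ is not an endomorphism of $H^3(X,\mathbb{C})$: contracting a $d$-closed form with a Beltrami differential does not in general produce a closed form (already $\phi_i\lrcorner\phi_j\lrcorner\Omega$ need not be closed), so your ``direct check on forms'' neither defines $N_i$ on cohomology nor transfers commutativity to it. The paper avoids this by setting $E_i:=\Phi_*(\partial/\partial t_i)\in\mathfrak{g}^{-1,1}$, encoding the harmonic-projected contractions in explicit matrices $A_i$ via the identities $[\phi_i\lrcorner\eta_j]=\sum_k (A_i)_{jk}[\bar{\eta}_k]$ and $[\phi_i\lrcorner\bar{\eta}_j]=\delta_{ij}[\bar{\Omega}]$, and quoting \cite[Lemma 5.5.1]{CMP} for abelianness. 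Second, and more seriously, the heart of $(1)\Rightarrow(2)$ is left unproven: your containment $\mathfrak{g}''\subseteq\mathfrak{g}^{-1,1}\oplus\mathfrak{g}^{0,0}\oplus\mathfrak{g}^{1,-1}$ already presupposes the Lie-triple closure $[[\mathfrak{a},\bar{\mathfrak{a}}],\mathfrak{a}]\subseteq\mathfrak{a}$ (one needs $[[[\mathfrak{a},\bar{\mathfrak{a}}],\mathfrak{a}],\mathfrak{a}]=0$, which abelianness alone does not give), and your final paragraph concedes that establishing $\mathfrak{a}'=\mathfrak{a}$ is an unresolved ``main obstacle.'' As written, condition (2) --- that $W$ is a globally Hermitian symmetric, totally geodesic subdomain of the correct dimension --- is therefore not delivered.

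It is instructive to see that the paper never tries to extract this closure from no quantum correction at all points, which is the route you sketch. It takes the totally geodesic, globally symmetric structure of $W:=\exp(\sum_i\tau_iE_i)\cap D$ as given for the abelian horizontal orbit (with the injectivity of $\rho$ supplying global coordinates, in the spirit of the Harish-Chandra realization used later for Hyperk\"ahler manifolds), and then proves $(1)\Rightarrow(2)$ by a concrete computation in $N_+$: no strong quantum correction makes $[\Omega^c(t)]=[\Omega^{cc}(t)]$ and its first derivatives $[\partial\Omega^c(t)/\partial t_i]$ explicit, these span $F^3\subset F^2$, and the constraint $\sigma(t)\in G_{\mathbb{C}}$ (preservation of $Q$) pins down the remaining block of the period matrix, giving $\sigma(t)=\exp(\sum_i t_iE_i)$; an open--closed argument (local Torelli and equal dimensions for openness, closedness of $W$ in $D$) then yields $\Phi(\mathcal{T})\subseteq W$ as an open submanifold --- not the equality $\Phi(\mathcal{T})=\exp(\mathfrak{a})\cdot O\cap D$ that you assert, which does not follow. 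Your filtration-propagation step also contains a false claim: $F^1=F^2+\partial F^2$ fails when the Yukawa tensor degenerates (e.g. if all $[\phi_i\lrcorner\phi_j\lrcorner\Omega]=0$); the correct substitute is $F^1=(F^3)^{\perp_Q}$ by the first Hodge--Riemann relation, which is exactly what the paper's use of $\sigma(t)^T Q\,\sigma(t)=Q$ implements. Your $(2)\Rightarrow(1)$ is in the paper's spirit (compare the $F^3$-lines, normalize by the projection to $H^{3,0}(X)$ to force $\lambda=1$ and $\tau=t$), but it quietly assumes $W$ is the orbit $\exp(\mathfrak{a})\cdot O\cap D$, an identification which in the paper is part of the setup rather than a consequence of (2).
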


\noindent This theorem also implies that, under the assumption of no quantum correction at any point $p \in \mathcal{T}$,
the Teichm\"uller space $\mathcal{T}$ is a locally Hermitian symmetric space
and its image $\Phi(\mathcal{T})$ under the period map is a totally geodesic submanifold
of the period domain $D$, both with the natural Hodge metrics.
Moreover, assuming the global Torelli theorem in \cite{CGL13} for Calabi-Yau manifolds,
another consequence of this theorem is that the period map $\Phi$
embed the Teichm\"uller space $\mathcal{T}$ as a Zariski-open subset in the totally geodesic
submanifold $W$.

\begin{proof}
Fix $p \in \mathcal{T}$, let $X$ be the corresponding Calabi-Yau manifold in the versal family
and $\phi_1, \cdots, \phi_N \in \mathbb{H}^{0,1}(X, T^{1,0}X)$ be an orthonormal basis with respect to the Calabi-Yau metric.
If $\Omega$ is a nowhere vanishing holomorphic $(n,0)$-form over $X$ and
$\eta_i = \phi_i \lrcorner \Omega \in \mathbb{H}^{2,1}(X)$, then
$[\Omega], [\eta_1], \cdots , $ $[\eta_N], [\overline{\eta}_1], \cdots, [\overline{\eta}_N],[\overline{\Omega}]$
is a basis of $H^3(X)$ adapted the Hodge filtration of $X$.
Assume $t$ is the flat affine coordinate around $p \in \mathcal{T}$, then we have
$\phi_i = \kappa(\frac{\partial}{\partial t_i})$, where $\kappa: T_p^{1,0} \mathcal{T}
\rightarrow H^{0,1}(X , T^{1,0}X) \cong \mathbb{H}^{0,1}(X, T^{1,0}X)$ is the Kodaira-Spencer map.

\noindent And if we assume
\begin{eqnarray*}
\begin{bmatrix}[\phi_i \lrcorner \eta_1] \\ \vdots \\ [\phi_i \lrcorner \eta_N]\end{bmatrix}
= A_i \begin{bmatrix} [\overline{\eta}_1] \\ \vdots \\ [\overline{\eta}_N] \end{bmatrix},
\end{eqnarray*}
for some $N \times N$-matrix $A_i$, then, from the identity $[\phi_i \lrcorner \overline{\eta}_j] = \delta_{ij} [\overline{\Omega}]$,  we have
\begin{eqnarray}
\begin{bmatrix}[\phi_i \lrcorner \Omega]\\ [\phi_i \lrcorner \eta]
\\ [\phi_i \lrcorner \overline{\eta}] \\
 [\phi_i \lrcorner \overline{\Omega}] \end{bmatrix}=
 \begin{bmatrix} 0 & e_i & ~& ~\\
 ~ & 0_{N \times N}& A_i & ~\\
 ~& ~ & 0_{N \times N} & e_i^T \\
 ~& ~& ~& 0 \end{bmatrix} \begin{bmatrix}
 [\Omega] \\ [\eta]\\ [\overline{\eta}]\\ [\overline{\Omega}]\end{bmatrix}= E_i \begin{bmatrix}
[\Omega]\\ [\eta]\\ [\overline{\eta}] \\ [\Omega] \end{bmatrix},
\end{eqnarray}
where $e_i =(0, \cdots, 1, \cdots, 0)$,~~$[\eta]= \begin{bmatrix} [\eta_1], \cdots, [\eta_N] \end{bmatrix}^T $,~~
$[\overline{\eta}]= \begin{bmatrix} [\overline{\eta}_1], \cdots, [\overline{\eta}_N] \end{bmatrix}^T$,~~
$[\phi_i \lrcorner \eta] = \\ \begin{bmatrix}[\phi_i \lrcorner \eta_1], \cdots, [\phi_i \lrcorner \eta_N] \end{bmatrix}^T$
and  $[\phi_i \lrcorner \overline{\eta}]=\begin{bmatrix} [\phi_i \lrcorner \overline{\eta}_1], \cdots,
[\phi_i \lrcorner \overline{\eta}_N] \end{bmatrix}^T$.

\noindent Moreover, if we define $A(t) = \sum_{i=1}^N t_i A_i$, then it is easy to check that
\begin{eqnarray*}
&&\sum_{i,j=1}^N t_i t_j [\phi_i \lrcorner
\phi_j \lrcorner \Omega]
= \sum_{i=1}^N t_i (t_1, \cdots, t_N) \begin{bmatrix} [\phi_i \lrcorner \eta_1]\\
\vdots \\ [\phi_i \lrcorner \eta_N]
 \end{bmatrix}
=(t_1, \cdots, t_N) A(t)
\begin{bmatrix} [\overline{\eta}_1] \\ \vdots \\ [\overline{\eta}_N] \end{bmatrix}.
\end{eqnarray*}
And, from the identity $[\phi_i \lrcorner \overline{\eta}_j] = \delta_{ij} [\overline{\Omega}]$,
 we know that
\begin{eqnarray*}
\frac{1}{3!}\sum_{i,j,k=1}^N  t_i t_j t_k [\phi_i \lrcorner \phi_j \lrcorner \phi_k \lrcorner \Omega]
&=& \frac{1}{3!}\left( \sum_{i=1}^N t_i\phi_i \right) \lrcorner \left( \sum_{j,k=1}^N
t_j t_k [\phi_j \lrcorner \phi_k \lrcorner \Omega] \right) \\
&=&\frac{1}{3!} (t_1, \cdots, t_N) A(t) \left(\sum_{i=1}^N t_i \phi_i \right) \lrcorner
\begin{bmatrix} [\overline{\eta}_1]\\ \vdots \\ [\overline{\eta}_N]  \end{bmatrix}\\
&=& \frac{1}{3!} (t_1, \cdots, t_N) A(t) \begin{bmatrix} t_1 \\ \vdots \\ t_N \end{bmatrix}.
\end{eqnarray*}
Therefore, the classic canonical form
\begin{eqnarray*}
\Omega^{cc}(t)=\exp(\sum_{i=1}^N t_i \phi_i ) \lrcorner \Omega
= \Omega + \sum_{i=1}^N t_i \phi_i \lrcorner \Omega
+ \frac{1}{2!} \sum_{i,j=1}^N t_i t_j \phi_i \lrcorner \phi_j \lrcorner \Omega
+ \frac{1}{3!} \sum_{i,j,k=1}^N t_i t_j t_k \phi_i \lrcorner \phi_j \lrcorner \phi_k \lrcorner \Omega,
\end{eqnarray*}
satisfies
\begin{eqnarray}\label{classic}
[\Omega^{cc}(t)] =\begin{bmatrix} 1, (t_1, \cdots, t_N), \frac{1}{2!} (t_1, \cdots, t_N)A(t) , \frac{1}{3!} (t_1, \cdots, t_N)
A(t) (t_1, \cdots, t_N)^T \end{bmatrix}
\begin{bmatrix} [\Omega] \\ [\eta] \\ [\overline{\eta}] \\ [\overline{\Omega}] \end{bmatrix}.
\end{eqnarray}

\noindent On the other hand, let $O:= \Phi(p)$ be the base point or reference point. If we define
$E_i := \Phi_*(\frac{\partial}{\partial t_i}) \in \mathfrak{g}^{-1,1}$,
then $\mathfrak{a}= \mbox{span}_{\mathbb{C}} \{E_1, E_2, \cdots, E_N\}$ is an abelian
Lie subalgebra, see \cite[Lemma 5.5.1, page 173]{CMP}.
So we can define
\begin{eqnarray*}
W:= \exp (\sum_{i=1}^N \tau_i E_i) \cap D,
\end{eqnarray*}
which is a totally geodesic submanifold of $D$ with respect to the natural Hodge metric.
We consider the map $\rho : \mathbb{C}^N \rightarrow \breve{D}$ given by
\begin{eqnarray*}
\rho(\tau)= \exp(\sum_{i=1}^N \tau_i E_i)
\end{eqnarray*}
where $\tau=(\tau_1, \cdots, \tau_N)$ is the standard coordinates of $\mathbb{C}^N$.
Then the Hodge-Riemann bilinear relations define a bounded domain
$\beth \subset \mathbb{C}^N$, which is biholomorphic to $W$ via the map $\rho$. As the
$E_i's$ commute with each other, thus, for any $z,w \in \beth$, we have
\begin{eqnarray*}
&& \rho(z)(\rho(w))^{-1} = \exp(\sum_{i=1}^N z_i E_i)  \exp(- \sum_{i=1}^N w_i E_i)
= \exp(\sum_{i=1}^N (z_i -w_i)E_i)=I \\
&& \Longleftrightarrow z_i = w_i ~~~~\text{for}~~~~ 1\leq i \leq N,
\end{eqnarray*}
i.e., $\rho$ is one-to-one, which means that it defines a global coordinate $\tau$ over $W$.

\noindent $(1) \Rightarrow (2)$:
First we show that there exists a small coordinate chart $U_p$ of $p \in \mathcal{T}$ such that $\Phi(U_p) \subset W$.
 For any point $q \in U_p$ with flat affine coordinate $t$,
there is a unique upper-triangle matrix $\sigma(t) \in N_+$, i.e.,
an nonsingular upper triangular block matrices with identity blocks in the diagonal,
 such that
\begin{eqnarray}
\sigma(t) \begin{bmatrix}
[\Omega], [\eta], [\overline{\eta}],
 [\overline{\Omega}]
\end{bmatrix}^T
\end{eqnarray}
is a basis of $H^3(X_q)$ adapted to the Hodge filtration at $X_q$.

\noindent
Also, we know that $[\Omega^c(t)]=[e^{\Phi(t)} \lrcorner \Omega]$ is a basis of $F^3(X_q)$,
so $[\frac{\partial}{\partial t_i} \Omega^c(t)] \in F^{2}(X_q), 1 \leq i \leq N$   by Griffiths' transversality.
By the assumption that the there is no strong quantum correction at point $p \in \mathcal{T}$
and Formula \ref{classic}, we have
\begin{eqnarray*}
[\Omega^c(t)]&=&[\Omega^{cc}(t)]= \begin{bmatrix} 1, (t_1, \cdots, t_N), \frac{1}{2!} (t_1, \cdots, t_N)A(t) , \frac{1}{3!} (t_1, \cdots, t_N)
A(t) (t_1, \cdots, t_N)^T \end{bmatrix}
\begin{bmatrix} [\Omega] \\ [\eta] \\ [\overline{\eta}] \\ [\overline{\Omega}] \end{bmatrix}.
\end{eqnarray*}
And, for any $1 \leq i \leq N$, we have
\begin{eqnarray*}
[\frac{\partial \Omega^c(t)}{\partial t_i}] &=& [\phi_i \lrcorner \Omega]
+ \sum_{j=1}^N t_j [\phi_i \lrcorner \phi_j \lrcorner \Omega]
+ \frac{1}{2!} \sum_{j,k=1}^N t_j t_k [\phi_i \lrcorner \phi_j \lrcorner \phi_k \lrcorner \Omega]\\
&=& [\eta_i] + (t_1, \cdots, t_N) \begin{bmatrix} \phi_i \lrcorner \eta_1 \\ \vdots \\ \phi_i \lrcorner
\eta_N \end{bmatrix} + \frac{1}{2} (t_1, \cdots, t_N) A(t) \begin{bmatrix} [\phi_i \lrcorner \overline{\eta}_1] \\
\vdots \\ [\phi_i \lrcorner \overline{\eta}_N]
\end{bmatrix},
\end{eqnarray*}
which implies that
\begin{eqnarray*}
\begin{bmatrix}
[\frac{\partial \Omega^c(t)}{\partial t_1}] \\ \vdots \\ [\frac{\partial \Omega^c(t)}{\partial t_N}]
\end{bmatrix}
&=& [\eta]+ A(t)  [\overline{\eta}]+ \frac{1}{2!} (t_1, \cdots, t_N) A(t)~ [\overline{\Omega}]\\
&=& \begin{bmatrix} 0_{n \times 1}, I_{n \times n}, A(t) , \frac{1}{2} A^T(t)(t_1, \cdots , t_N)^T  \end{bmatrix}
\begin{bmatrix} [\Omega] \\ [\eta] \\ [\overline{\eta}] \\ [\overline{\Omega}] \end{bmatrix}.
\end{eqnarray*}

\noindent Thus $\{[\Omega^c(t)], [\frac{\partial \Omega^c(t)}{\partial t_1}] \cdots [\frac{\partial \Omega^c(t)}{\partial t_N}]\}$
is a basis of $F^2(X_q)$ as they are linearly independent, which implies that
the unique matrix $\sigma(t)\in N_+$ has the form
\begin{eqnarray*}
\sigma(t)= \begin{bmatrix}
I & (t_1, \cdots, t_N) & \frac{1}{2}(t_1, \cdots, t_N)A(t) &
\frac{1}{3!} (t_1, \cdots, t_N) A(t) (t_1, \cdots, t_N)^T \\
~& I & A(t) & \frac{1}{2} A^T(t) (t_1, \cdots, t_N)^T\\
~ & ~ & I & * \\
~ & ~ & ~ & I
\end{bmatrix},
\end{eqnarray*}
where $*$ represents an unknown column vector.
Meanwhile, we know $\sigma(t) \in G_{\mathbb{C}}$, i.e.
\begin{eqnarray*}
\sigma(T)^T \begin{bmatrix}
~&~&~& 1\\
~&~& - I_{N \times N}&~\\
~&I_{N \times N} &~&~\\
-1 &~&~&~
\end{bmatrix} \sigma(t)= \begin{bmatrix}
~&~&~& 1\\
~&~& - I_{N \times N}&~\\
~&I_{N \times N} &~&~\\
-1 &~&~&~
\end{bmatrix},
\end{eqnarray*}
which implies that
\begin{eqnarray*}
\sigma(t)= \begin{bmatrix} I & (t_1, \cdots, t_N) & \frac{1}{2}(t_1, \cdots, t_N)A(t) &
\frac{1}{3!} (t_1, \cdots, t_N) A(t) (t_1, \cdots, t_N)^T \\
~& I & A(t) & \frac{1}{2} A^T(t) (t_1, \cdots, t_N)^T\\
~ & ~ & I & (t_1, \cdots, t_N)^T \\
~ & ~ & ~ & I\end{bmatrix}.
\end{eqnarray*}
\noindent  Direct computation shows that
\begin{eqnarray}
\sigma(t)= \exp \left( \begin{bmatrix}
0 & (t_1, \cdots, t_N) & ~ &~\\
~&0_{N \times N}& A&~\\
~&~&0_{N \times N}& (t_1, \cdots, t_N)^T\\
~&~&~&0
 \end{bmatrix}\right)
 = \exp(\sum_{i=1}^N t_i E_i) \in W,
\end{eqnarray}
i.e., $\Phi(q) \in W$ for any point $q \in U_p$, which implies that $\Phi(U_p) \subset W$.

\noindent Take $q \in \mathcal{T}$ such that
$\Phi(q) \in \Phi(\mathcal{T}) \cap W$, then, the same argument for the point $q \in \mathcal{T}$,
there exists a neighborhood $U_q$ of $q$ such that $\Phi(U_q) \subset W$,
so $\Phi(U_q) \subset \Phi(\mathcal{T})\cap W$. By the local Torelli theorem
for Calabi-Yau manifolds, $\Phi(U_q) \subset \Phi(\mathcal{T})\cap W$ is an open
neighborhood of $\Phi(q)$ as $\mathcal{T}$ and $W$ has the same dimension,
thus $\Phi(\mathcal{T})\cap W \subset \Phi(\mathcal{T})$
is an open subset of $\Phi(\mathcal{T})$.
On the other hand, $W = \exp(\sum_{i=1}^N \tau_i E_i) \cap D \subset D$ is a closed
subset of the period domain $D$.
The closedness of
$W \subset D$ implies that $\Phi(\mathcal{T}) \cap W  \subset \Phi(\mathcal{T})$ is also a closed subset.
Therefore, $\Phi(\mathcal{T})\cap W = \Phi(\mathcal{T})$, i.e., $\Phi(\mathcal{T}) \subset W$
as $\Phi(\mathcal{T})$ is connected and not empty.

\noindent $(2) \Rightarrow (1)$: Fix $p \in \mathcal{T}$, we have
$\Phi(\mathcal{T}) \subset W = \exp (\sum_{i=1}^N \tau_i E_i) \cap D$
with $\tau \in \beth$.
If the flat affine coordinate of $q \in U_p$ is $t$
where $U_p$ is a small coordinate chart,
then we know the canonical family of $(3,0)$-classes is given by
\begin{eqnarray*}
[\Omega^c(t)]=[e^{\Phi(t)} \lrcorner \Omega] \in F^3(X_q).
\end{eqnarray*}
Moreover, from the fact that $\Phi(q)\in W$, there
exists $\tau=(\tau_1, \cdots, \tau_N) \in \beth$ such that
$$\Phi(q)= \exp(\sum_{i=1}^N \tau_i E_i),$$
i.e., the image of $q\in U_p$ under the period map $\Phi$ is
\begin{eqnarray*}
\Phi(q)&=&\exp(\sum_{i=1}^N \tau_i E_i) \begin{bmatrix} [\Omega]\\ [\eta] \\ [\overline{\eta}] \\ [\overline{\Omega}]] \end{bmatrix}\\
&=&\begin{bmatrix} 1 & (\tau_1, \cdots, \tau_N) & \frac{1}{2}(\tau_1, \cdots, \tau_N)A(\tau) &
\frac{1}{3!} (\tau_1, \cdots,\tau_N) A(\tau) (\tau_1, \cdots, \tau_N)^T \\
~& I_{N \times N} & A(\tau) & \frac{1}{2}A^T(\tau) (\tau_1, \cdots, \tau_N)^T\\
~ & ~ & I_{N \times N} & (\tau_1, \cdots, \tau_N)^T \\
~ & ~ & ~ & 1\end{bmatrix} \begin{bmatrix} [\Omega]\\ [\eta] \\ [\overline{\eta}] \\ [\overline{\Omega}]] \end{bmatrix},
\end{eqnarray*}
which is a basis adapted to the Hodge filtration over $X_q$. In particular, we know the first element in the basis
\begin{eqnarray*}
A_q(t) &=&(1, (\tau_1, \cdots, \tau_N), \frac{1}{2!} (\tau_1, \cdots, \tau_N) A(\tau), \frac{1}{3!} (\tau_1, \cdots,\tau_N)
A(\tau) (\tau_1, \cdots, \tau_N)^T) \begin{bmatrix} [\Omega]\\ [\eta] \\ [\overline{\eta}] \\ [\overline{\Omega}] \end{bmatrix}
\\ &\in& F^3 (X_q)= H^{3,0}(X_q).
\end{eqnarray*}
Thus, by the fact that
$H^{3,0}(X_q) \cong \mathbb{C}$ and $[\Omega^c(t)] \in H^{3,0}(X_q)$, there exists a constant
$\lambda \in \mathbb{C}$ such that
\begin{eqnarray*}
A_q(t) = \lambda [\Omega^c(t))].
\end{eqnarray*}
Also, we have
\begin{eqnarray*}
\mbox{Pr}_{H^{3,0}(X)}(A_q(t))= \mbox{Pr}_{H^{3,0}(X)}([\Omega^c(t)])=[\Omega],
\end{eqnarray*}
so $A_q(t) = [\Omega^c(t))] $. Thus
\begin{eqnarray}
\mbox{Pr}_{H^{2,1}(X)}(\Omega^c(t))= \mbox{Pr}_{H^{2,1}(X)}(A_q(t))= \sum_{i=1}^N \tau_i [\eta_i],
\end{eqnarray}
and,  from Corollary \ref{canonicalclass},
\beq
[\Omega^c(t)]=[\Omega]+ \sum^N_{i=1} t_i [\eta_i] + A(t),
\eeq
where $A(t) \in H^{1, 2}(X) \oplus H^{0,3}(X)$. Then we have
\begin{eqnarray*}
\sum_{i=1}^N t_i [\eta_i] = \sum_{i=1}^N \tau_i [\eta_i],
\end{eqnarray*}
which implies that $\tau_i = t_i,~ 1  \leq i \leq N$ as $\{[\eta_i]\}_{i=1}^N$
is a basis of $H^{2,1}(X)$. Therefore,
\begin{eqnarray*}
[\Omega^c(t)]= (1, (t_1, \cdots, t_N), \frac{1}{2!} (t_1, \cdots, t_N) A(t), \frac{1}{3!} (t_1, \cdots,t_N)
A(t) (t_1, \cdots, t_N)^T) \begin{bmatrix} [\Omega]\\ [\eta] \\ [\overline{\eta}] \\ [\overline{\Omega}] \end{bmatrix}
= [\Omega^{cc}(t)],
\end{eqnarray*}
by Formula \ref{classic}, i.e., there is no strong quantum correction at point $p \in \mathcal{T}$.
\end{proof}

\section{Compact Hyperk\"ahler Manifolds}\label{hkmanifold}
In Section \ref{preliminary}, we review some preliminary results about
Hyperk\"ahler manifolds and the period domain of weight $2$.
In Section \ref{local family}, we construct the canonical families of
$(2,0)$ and $(2n,0)$-classes by using the canonical families of smooth forms
$e^{\Phi(t)} \lrcorner \Omega^{2,0}$ and holomorphic forms $e^{\Phi(t)} \lrcorner \wedge^n \Omega^{2,0}$.
In Section $\ref{global family}$, we prove the expansions of the canonical families of $(2,0)$ and $(2n,0)$-classes
are actually globally defined over Teichm\"uller space $\mathcal{T}$.

\subsection{Preliminary Results}\label{preliminary}
In this section, we will review some preliminary results about Hyperk\"{a}hler
manifolds and the period domain.
We define Hyperk\"{a}hler manifolds as follows,
\begin{definition}
Let $X$ be a compact and simply-connected
K\"{a}hler manifold of complex dimension $2n\geq4$ such
that there exists a non-zero holomorphic non-degenerate $(2,0)$-form
$\Omega^{2,0}$ on $X$, unique up to a constant such that $\det(\Omega^{2,0})\neq0$
 at each point $x\in X$ and $H^{1}(X,\mathcal{O}_{\text{X}%
})=0.$ Then $X$ is called a Hyperk\"{a}hler manifold.
\end{definition}
\noindent The conditions on
the holomorphic $(2,0)$-form $\Omega^{2,0}$ imply that $\dim_{\mathbb{C}%
}H^{2}(X, \mathcal{O}_{\text{X}})=1.$
 A pair $(X,L)$ consisting of a Hyperk\"ahler manifold
 $X$ of complex dimension $2n$ with $2n\geq 4$ and an ample
line bundle $L$ over $X$ is called a polarized Hyperk\"ahler
manifold. By abuse of notation, the Chern class of $L$ will also be
denoted by $L$ and thus $L\in H^2(X,\mathbb{Z})$.
Let $\omega=\omega_g$ correspond to the Calabi-Yau metric in the class $L$, then
\begin{eqnarray*}
\mathbb{H}^{0,1}_L (X, T^{1,0}X)
=\{ \phi \in \mathbb{H}^{0,1}(X, T^{1,0}X) | [\phi \lrcorner \omega]=0\}
\end{eqnarray*}
And we know that if $\phi \in \mathbb{H}^{0,1}_L(X, T^{1,0}X)$ is harmonic, then
$\phi \lrcorner \omega$ is a harmonic $(0,2)$-form. Thus we have the identification
\begin{eqnarray*}
\mathbb{H}^{0,1}_L (X, T^{1,0}X)
=\{ \phi \in \mathbb{H}^{0,1}(X, T^{1,0}X) | \phi \lrcorner \omega=0\}
\end{eqnarray*}
Furthermore, the primitive cohomology groups satisfy:
\begin{eqnarray*}
H^{1,1}_{pr}(X) & \cong &\mathbb{H}^{1,1}_{pr}(X)= \{\eta \in H^{1,1}(X)| \eta \wedge \omega^{2n-1}=0\}\\
H^2_{pr}(X)&=& H^{2,0}(X) \oplus H^{1,1}_{pr}(X) \oplus H^{0,2}(X)
\end{eqnarray*}
The primitive cohomology group $H^2_{pr}(X)$ carry a nondegenerate bilinear form, the so-called Hodge
bilinear form
\begin{eqnarray}
Q(\alpha, \beta)=- \int_X \omega^{2n-2} \wedge \alpha \wedge \beta, ~~~~~~~~~\alpha, \beta \in H^2_{pr}(X),
\end{eqnarray}
which is evidently defined over $\mathbb{Q}$.

\noindent We consider the decreasing Hodge filtration $H^2_{pr}(M, \mathbb{C})=F^0 \supset F^1 \supset F^2$ with condition
\begin{eqnarray}\label{dim}
\mbox{dim}_{C} F^2=1, ~~~\mbox{dim}_{\mathbb{C}} F^1= b_2-2,~~~~~\mbox{dim}_{\mathbb{C}} F^0= b_2 -1.
\end{eqnarray}
Then the Hodge-Riemann relations becomes
\begin{eqnarray}\label{HR1}
Q(F^k, F^{3-k})=0,
\end{eqnarray}
\begin{eqnarray}\label{HR2}
Q(C v, \overline{v})>0  ~~~\text{if}~~v \neq 0,
\end{eqnarray}
where $C$ is the Weil operator given by $C v = (\sqrt{-1})^{2k-2} v$ for
$v \in H^{k, 2-k}_{pr}(M)= F^k \cap \overline{F^{2-k}}$.  The period domain
$D$ for polarized Hodge structures with data $\ref{dim}$ is the space of all such
Hodge filtrations
\begin{eqnarray*}
D = \{ F^2 \subset F^1 \subset F^0 = H^2_{pr}(X, \mathbb{C})| \ref{dim},~~ \ref{HR1}~~ \text{and} ~~\ref{HR2}~~ \text{hold}\}.
\end{eqnarray*}
The compact dual $\breve{D}$ of $D$ is
\begin{eqnarray*}
\breve{D }= \{ F^2 \subset F^1 \subset F^0 = H^2_{pr}(X, \mathbb{C})| \ref{dim}~~\text{and}~~ \ref{HR1}~~\text{hold}\}.
\end{eqnarray*}
The period domain $D \subseteq \breve{D}$ is an open subset.
We may identify the period space $D$ with the Grassmannian of positive
$2$-plans in $L^\perp$, and this gives us
\begin{eqnarray*}
D \cong SO(b_2-3, 2) / SO(2)\times SO(b_2-3).
\end{eqnarray*}
So the period domain $D$ is a global Hermitian symmetric space.

\noindent Let $\mathcal{T}= \mathcal{T}_L$ be the Teichm\"uller space of the polarized irreducible Hyperk\"ahler manifold $(X, L)$
which is a smooth complex manifold. The following result follows from  \cite{V09} or \cite{CGL13, CGL14},
\begin{theorem}
The period map
\begin{eqnarray*}
\Phi:\, \mathcal{T} \rightarrow D
\end{eqnarray*}
is injective.
\end{theorem}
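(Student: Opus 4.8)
The plan is to recognize the asserted injectivity as the global Torelli theorem for marked polarized Hyperk\"ahler manifolds and to reduce it to the results cited in \cite{V09, CGL13, CGL14}. Since $\mathcal{T}$ is built, exactly as in the Calabi--Yau case of Section \ref{construction}, as the universal cover of the smooth moduli space carrying a level-$m$ structure, it is a connected, simply connected complex manifold; hence it suffices to show that $\Phi$ separates any two points of this single component. I would organize the argument in two stages: a local injectivity (immersion) statement coming from the holomorphic symplectic structure, and a global separation statement that is the genuine content of the theorem.

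For the local stage, note that the Hodge structure has weight two with $\dim_{\mathbb C} F^2 = 1$, so Griffiths transversality is vacuous and $D \cong SO(b_2-3,2)/(SO(2)\times SO(b_2-3))$ is the full Type IV Hermitian symmetric domain. The differential of $\Phi$ at a point $p$ is the contraction map $T^{1,0}_p\mathcal{T} \cong \mathbb{H}^{0,1}_L(X, T^{1,0}X) \to \Hom(H^{2,0}(X), H^{1,1}_{pr}(X))$ determined by $\phi \mapsto [\phi \lrcorner \Omega^{2,0}]$. Because $\Omega^{2,0}$ is nondegenerate it induces an isomorphism $T^{1,0}X \cong \Omega^{1,0}_X$, under which $\phi \mapsto \phi \lrcorner \Omega^{2,0}$ becomes the tautological identification $H^1(X, T^{1,0}X) \cong H^1(X, \Omega^1_X) = H^{1,1}(X)$; restricting to the polarized classes lands isomorphically in $H^{1,1}_{pr}(X)$. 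Thus $d\Phi$ is injective and $\Phi$ is an immersion, with $\dim_{\mathbb C}\mathcal{T} = \dim_{\mathbb C} H^{1,1}_{pr}(X) = b_2-3 = \dim_{\mathbb C} D$.

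The global stage is the main obstacle, and it is here that the deep input is required. Suppose $p, q \in \mathcal{T}$ satisfy $\Phi(p) = \Phi(q)$; then the fibers $X_p$ and $X_q$ admit a Hodge isometry of $(H^2_{pr}, Q)$ compatible with the markings. Verbitsky's global Torelli theorem, in the monodromy-equivariant refinement due to Markman, asserts that on a connected component of the Teichm\"uller space the only obstruction to injectivity of the period map is the identification of inseparable points, and that such points parametrize bimeromorphic models of one Hyperk\"ahler manifold. The plan is then to use the polarization to eliminate this ambiguity: a marking-compatible Hodge isometry preserving the ample class $L \in H^2(X, \mathbb{Z})$ arising from a bimeromorphism between polarized Hyperk\"ahler manifolds is induced by a biregular isomorphism, so $X_p \cong X_q$ as marked polarized manifolds, forcing $p = q$ in $\mathcal{T}$.

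Concretely, I would first record the precise statement of the global Torelli theorem from \cite{V09, CGL13, CGL14} in its marked polarized form, and then verify that the fixed polarization class $L$ together with the level-$m$ structure simultaneously trivializes the monodromy action and separates the inseparable points, so that the resulting map on $\mathcal{T}$ is genuinely one-to-one. The delicate point is not the Hodge-theoretic bookkeeping, which is formal once $D$ is known to be Hermitian symmetric, but the geometric fact that Hodge-isometric marked polarized Hyperk\"ahler manifolds are biregularly isomorphic; this is exactly the content supplied by \cite{V09} and \cite{CGL13, CGL14}, and I would invoke it rather than reprove it.
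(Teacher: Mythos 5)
Your proposal takes essentially the same route as the paper: the paper offers no independent argument at all, stating only that the theorem ``follows from \cite{V09} or \cite{CGL13, CGL14}'', which is exactly the reduction to the global Torelli theorem for Hyperk\"ahler manifolds that you carry out. The extra material you supply (local Torelli via the nondegeneracy of $\Omega^{2,0}$ making $H^1(X,T^{1,0}X)\cong H^{1,1}(X)$, and the polarization plus marking eliminating the inseparable, bimeromorphic points in Verbitsky's theorem) is a correct fleshing-out of the standard deduction, but the paper itself simply invokes the cited references as you ultimately do.
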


\subsection{Local Family of $(2,0)$- and $(2n,0)$-Classes}\label{local family}

In this section, we derive the expansions of the canonical families of $(2,0)$ and $(2n,0)$-calsses
$[\mathbb{H}(e^{\Phi(t)}\lrcorner \Omega^{2,0})]$ and $[e^{\Phi(t)}\lrcorner \wedge^n \Omega^{2,0}]$,
where $\Omega^{2,0}$ is a nowhere vanishing holomorphic $(2,0)$-form over the Hyperk\"aher manifold $X$.
First we have the following Bochner's principle for
compact Ricci-flat manifolds:
\begin{proposition}\label{bochner}(Bochner's principle)
On a compact K\"ahler Ricci-flat manifold, any holomorphic tensor field
(covariant or contravariant) is parallel.
\end{proposition}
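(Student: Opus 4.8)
The statement is the classical Bochner principle, and the plan is to prove it by the standard Bochner--Kodaira--Nakano (Weitzenb\"ock) technique applied to holomorphic tensor bundles. First I would realize an arbitrary holomorphic tensor field $T$ (covariant, contravariant, or mixed) as a holomorphic section of the Hermitian holomorphic vector bundle
\[
E=(T^{1,0}X)^{\otimes a}\otimes(\Omega^{1}_X)^{\otimes b},
\]
equipped with the metric $h$ induced by the Calabi--Yau metric $g$. Since $g$ is K\"ahler, the Chern connection $D=D'+\bar\partial$ of $(E,h)$ coincides with the connection induced on $E$ by the Levi--Civita connection $\nabla$ of $g$; hence ``parallel with respect to $\nabla$'' and ``parallel with respect to $D$'' mean the same thing, and it suffices to prove $DT=0$. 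Holomorphicity of $T$ says precisely that $\bar\partial T=0$.

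The key step is to apply the Bochner--Kodaira--Nakano identity $\Delta''=\Delta'+[i\Theta(E),\Lambda]$ to the $E$-valued $(0,0)$-form $T$. On a section one has $\bar\partial^{*}T=0$ and $D'^{*}T=0$ automatically, while $\Lambda T=0$, so the identity collapses to
\[
\bar\partial^{*}\bar\partial T = D'^{*}D'T-\Lambda\,i\Theta(E)\,T .
\]
Using $\bar\partial T=0$, the left-hand side vanishes, and pairing with $T$ and integrating over the compact manifold $X$ gives
\[
\|D'T\|^{2}=\int_X\bigl\langle \Lambda\,i\Theta(E)\,T,\;T\bigr\rangle\,dV_g .
\]
I would then identify the zeroth-order operator $\Lambda\,i\Theta(E)$. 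Because the curvature of a tensor bundle acts as a derivation on the tensor slots, $\Theta(E)$ is the sum of the curvatures of $T^{1,0}X$ and $\Omega^1_X$ on the respective factors, and the trace $\Lambda\,i\Theta$ of the tangent-bundle curvature over the $(1,1)$-form indices is exactly the Ricci endomorphism. Thus $\Lambda\,i\Theta(E)$ is a sum of Ricci contractions acting slot by slot.

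The hypothesis that $X$ is Ricci-flat then forces $\Lambda\,i\Theta(E)=0$, so the right-hand side above vanishes and $\|D'T\|^{2}=0$, i.e. $D'T=0$. Combined with $\bar\partial T=D''T=0$, this yields $DT=\nabla T=0$, which is the assertion. I expect the only genuinely delicate point to be the bookkeeping in the previous paragraph: verifying that the Chern curvature of the tensor bundle $E$ contracts, under $\Lambda$, to precisely the Ricci-curvature endomorphism on each tensor factor (with the dual factors contributing the transpose), so that Ricci-flatness annihilates the entire curvature term. Everything else is the standard compact-K\"ahler integration-by-parts argument, together with the elementary fact that on a K\"ahler manifold the induced Chern and Levi--Civita connections agree.
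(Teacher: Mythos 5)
Your proof is correct, but it is organized differently from the paper's. The paper follows the Bochner--Yano/Beauville route: it quotes the pointwise identity $\Delta(\|\tau\|^2)=\|\nabla\tau\|^2$, valid for a holomorphic tensor field on a compact K\"ahler Ricci-flat manifold (the curvature terms that would appear for a general metric are precisely the slot-by-slot Ricci contractions, and their cancellation is delegated to the cited computation in [B-Y53, p.~142]), and then concludes by Stokes: the left-hand side has mean value zero over the compact manifold while the right-hand side is pointwise nonnegative, forcing $\nabla\tau\equiv 0$. You instead run the Bochner technique globally, at the level of the Hermitian holomorphic bundle $E=(T^{1,0}X)^{\otimes a}\otimes(\Omega^1_X)^{\otimes b}$: the Bochner--Kodaira--Nakano identity applied to the $E$-valued $(0,0)$-form $T$, with holomorphicity killing the $\Delta''$ side, and the identification of $\Lambda\, i\Theta(E)$ with Ricci contractions acting slot by slot, which vanish by Ricci-flatness. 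Your bookkeeping there is essentially right, with one cosmetic slip: the dual slots contribute \emph{minus} the transposed Ricci endomorphism, not the transpose itself, though the sign is immaterial since everything vanishes. The analytic content of the two arguments is identical --- both are Weitzenb\"ock identities integrated over $X$, with Ricci-flatness annihilating the curvature term --- but your version makes explicit the cancellation the paper hides in a citation, at the cost of the bundle-curvature computation you flag as the delicate point, and it generalizes immediately to vanishing theorems under curvature sign hypotheses (Nakano-type positivity), whereas the paper's scalar formulation is more elementary and self-contained modulo the reference. Your reduction of $\nabla T=0$ to $DT=0$ via the agreement of the Chern connection of $(E,h)$ with the connection induced by the Levi-Civita connection is exactly where the K\"ahler hypothesis enters, and you correctly note it.
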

\noindent The proof rests on the following formula, which follows from a tedious
but straightforward computation \cite[p. 142]{BY}: if $\tau$ is any tensor field,
\begin{eqnarray*}
\Delta (\|\tau\|^2)= \|\nabla \tau \|^2.
\end{eqnarray*}
Therefore $\Delta(\|\tau\|^2)$ is nonnegative, hence $0$ since its mean value over $X$ is $0$
by the Stokes' formula. It follows that $\tau$ is parallel.

\noindent Then we
consider the canonical family of smooth $(2,0)$-forms
$\Omega^{c;2,0}(t)= e^{\Phi(t)}\lrcorner \Omega^{2,0}$
whose harmonic projection has the following expansion,

\begin{theorem} \label{2-0}
Fix $p \in \mathcal{T}$, let $(X,L)$ be the corresponding polarized Hyperk\"ahler manifold
and $\Omega^{2,0}$ be a nonzero holomorphic nondegenerate $(2,0)$-form
over $X$ and $\left\{\phi_{i}\right\}_{i=1}^N  $ be an orthonormal basis $\mathbb{H}^{0,1}_L\left(X,
T^{1,0}X\right)$ with respect to the Calabi-Yau metric. Then, in a neighborhood $U$ of $p$,  there exists a
locally canonical family of smooth $(2, 0)$-forms,
\[
 \Omega^{c;2,0}(t)= e^{\Phi(t)}\lrcorner \Omega^{2,0},
 \]
which defines a canonical family of $(2,0)$-classes
\begin{equation}
[\mathbb{H} ( \Omega^{c;2,0}(t))]  = [\Omega^{2,0}]+
{\displaystyle\sum\limits_{i=1}^{N}}
 [\phi_{i}\lrcorner\Omega^{2,0}] t_{i}+\frac{1}{2}%
{\displaystyle\sum\limits_{i=1}^{N}}
\left[   \phi_{i}\lrcorner\phi_{j}\lrcorner\Omega
^{2,0}  \right]  t_{i}t_{j}.\label{hk-0}%
\end{equation}%
\end{theorem}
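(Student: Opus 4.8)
\emph{Plan.} The starting observation is that $\Omega^{2,0}$ has exactly two holomorphic indices, so contracting it with three or more $T^{1,0}$-valued $(0,1)$-forms gives zero; hence $\wedge^{k}\Phi(t)\lrcorner\Omega^{2,0}=0$ for $k\geq 3$ and the exponential truncates,
$$\Omega^{c;2,0}(t)=\Omega^{2,0}+\Phi(t)\lrcorner\Omega^{2,0}+\frac{1}{2}\,\Phi(t)\lrcorner\Phi(t)\lrcorner\Omega^{2,0}.$$
With respect to the reference complex structure on $X$ the three summands are of pure type $(2,0)$, $(1,1)$ and $(0,2)$. I would first check that this form is $d$-closed: by the identity of Proposition \ref{holcriteria}, $e^{-\Phi(t)}\lrcorner d\,\Omega^{c;2,0}(t)=\bar{\partial}\Omega^{2,0}+\partial(\Phi(t)\lrcorner\Omega^{2,0})$, and since $\Omega^{2,0}$ is holomorphic while each $\varphi_{I}\lrcorner\Omega^{2,0}$ is $\partial$-closed (being harmonic for $|I|=1$ and $\partial$-exact for $|I|\geq 2$), the right-hand side vanishes. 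Consequently the de Rham class $[\Omega^{c;2,0}(t)]$ is well-defined and equals $[\mathbb{H}(\Omega^{c;2,0}(t))]$, so it is enough to compute the Hodge components of this fixed class, with respect to the reference Hodge structure of $X$, in $H^{2}(X,\mathbb{C})=H^{2,0}\oplus H^{1,1}\oplus H^{0,2}$; because harmonic projection on the K\"ahler manifold $X$ preserves bidegree, I may project each of the three summands separately.

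The $(2,0)$-part is simply $\Omega^{2,0}$, which is already harmonic, giving the leading term $[\Omega^{2,0}]$. For the $(1,1)$-part $\Phi(t)\lrcorner\Omega^{2,0}=\sum_{|I|\geq 1}t^{I}\,\varphi_{I}\lrcorner\Omega^{2,0}$, I would invoke Bochner's principle (Proposition \ref{bochner}): since $\Omega^{2,0}$ is parallel and each $\varphi_{i}\in\mathbb{H}^{0,1}_{L}(X,T^{1,0}X)$ is harmonic, the contraction $\varphi_{i}\lrcorner\Omega^{2,0}$ is a harmonic $(1,1)$-form (the Hyperk\"ahler analogue of Lemma \ref{isometry}), so the degree-one terms contribute exactly $\sum_{i}[\varphi_{i}\lrcorner\Omega^{2,0}]\,t_{i}$; the terms with $|I|\geq 2$ are $\partial$-exact, hence orthogonal to the harmonic space, so their harmonic projection vanishes and they contribute nothing. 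Finally the $(0,2)$-part $\frac{1}{2}\Phi(t)\lrcorner\Phi(t)\lrcorner\Omega^{2,0}$ lives in the one-dimensional space $H^{0,2}(X)=\mathbb{C}\,[\overline{\Omega^{2,0}}]$; its degree-two component is $\frac{1}{2}\sum_{i,j}t_{i}t_{j}\,\varphi_{i}\lrcorner\varphi_{j}\lrcorner\Omega^{2,0}$, yielding the quadratic term in the statement, and I must show that every higher-order contribution $\varphi_{I}\lrcorner\varphi_{J}\lrcorner\Omega^{2,0}$ with $|I|+|J|\geq 3$ has vanishing harmonic projection.

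The main obstacle is precisely this last vanishing, together with the input that $\varphi_{I}\lrcorner\Omega^{2,0}$ is $\partial$-exact for $|I|\geq 2$. The latter is the Hyperk\"ahler counterpart of property $(3)$ in Theorem \ref{BTT}, which is stated there for the holomorphic volume form $\wedge^{n}\Omega^{2,0}$; I would deduce the statement for $\Omega^{2,0}$ itself by using that wedging with the parallel, $d$-closed form $\wedge^{n-1}\Omega^{2,0}$ intertwines the two contractions and is injective on the relevant cohomology. For the higher-order $(0,2)$-terms, since $H^{0,2}$ is one-dimensional it suffices to compute the single pairing with $[\Omega^{2,0}]$; using that $\Omega^{2,0}$ is parallel (Bochner) and the $\partial$-exactness just established, an integration-by-parts argument shows this pairing vanishes whenever $|I|+|J|\geq 3$, so the expansion truncates at order two exactly as claimed. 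This also shows, \emph{a posteriori}, that $[\mathbb{H}(\Omega^{c;2,0}(t))]$ is a polynomial of degree two in $t$, consistent with the quadric structure of the weight-two period domain $D\cong SO(b_{2}-3,2)/(SO(2)\times SO(b_{2}-3))$.
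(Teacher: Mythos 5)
Your computation of the harmonic projection follows, in substance, the paper's own proof: the truncation of $e^{\Phi(t)}\lrcorner\Omega^{2,0}$ at $\wedge^{2}$ (since $\Omega^{2,0}$ has only two holomorphic indices), the harmonicity of $\phi_{i}\lrcorner\Omega^{2,0}$ via Bochner's principle (Proposition \ref{bochner}), the $\partial$-exactness of $\phi_{I}\lrcorner\Omega^{2,0}$ for $|I|\geq 2$ deduced from property $(3)$ of Theorem \ref{BTT} applied to $\wedge^{n}\Omega^{2,0}$, and the vanishing of $\mathbb{H}(\phi_{J}\lrcorner\phi_{K}\lrcorner\Omega^{2,0})$ for $|J|+|K|\geq 3$ detected by the single pairing against $\wedge^{n}\Omega^{2,0}\wedge\wedge^{n-1}\overline{\Omega^{2,0}}$ together with Stokes --- these are exactly the paper's steps (the paper packages the pairing argument as Claim \ref{claim1}, using parallelism so that wedging with the parallel form intertwines the Hodge decomposition). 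One cosmetic difference: for the $\partial$-exactness lemma you propose injectivity of wedging with $\wedge^{n-1}\Omega^{2,0}$, whereas the paper contracts with the parallel dual bivector $\wedge^{n-1}\Omega^{*2,0}$, which produces the $\partial$-primitive explicitly, $\phi_{I}\lrcorner\Omega^{2,0}=\partial\bigl(\wedge^{n-1}\Omega^{*2,0}\lrcorner\psi_{I}\bigr)$. Your route needs injectivity at the level of $\partial$-cohomology (pointwise injectivity of the symplectic Lefschetz map is not by itself enough to pull $\partial$-exactness back through the wedge), so to complete it you would end up constructing essentially this same parallel left inverse.

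There is, however, one genuinely wrong step: the claimed $d$-closedness of $\Omega^{c;2,0}(t)$ via Proposition \ref{holcriteria}. That proposition, and the identity $e^{-\Phi(t)}\lrcorner d\bigl(e^{\Phi(t)}\lrcorner\Omega\bigr)=\bar{\partial}\Omega+\partial(\Phi(t)\lrcorner\Omega)$ it rests on, are stated for $(n,0)$-forms with $n=\dim_{\mathbb{C}}X$, i.e.\ top holomorphic degree: the proof uses $\partial_{t}\bigl(e^{\Phi(t)}\lrcorner\Omega\bigr)=0$, which holds purely for type reasons in top degree. For a $(2,0)$-form on a $2n$-fold with $2n\geq 4$, $\partial_{t}\bigl(e^{\Phi(t)}\lrcorner\Omega^{2,0}\bigr)$ is a possibly nonzero $(3,0)$-form on $X_{t}$ and the identity fails; $e^{\Phi(t)}\lrcorner\Omega^{2,0}$ is in general neither closed nor holomorphic, which is precisely why the theorem (in contrast with Corollary \ref{c-0}, where the top-degree form $\wedge^{n}\Omega^{2,0}$ is used) applies the harmonic projection $\mathbb{H}$ before forming classes. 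Fortunately this step is excisable: the statement concerns only $[\mathbb{H}(\Omega^{c;2,0}(t))]$, and since $\mathbb{H}$ is linear and preserves bidegree on the compact K\"ahler manifold $X$, you may project the three bidegree pieces separately without knowing the total form is closed. Once the closedness claim is deleted, the remainder of your argument goes through as in the paper.
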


\begin{proof}
Let us consider the canonical family of smooth $(2,0)$-forms
\begin{equation}%
\begin{array}
[c]{c}%
e^{\Phi(t)}\lrcorner\Omega^{2,0}=%
{\displaystyle\sum\limits_{i=1}^{N}}
\phi_{i}\lrcorner\Omega^{2,0}t^{i}+\frac{1}{2}%
{\displaystyle\sum\limits_{i,j=1}^{N}}
\left(  \phi_{i}\lrcorner\phi_{j}\lrcorner\Omega^{2,0}
+ \phi_{ij} \lrcorner \Omega^{2,0}\right)
t_{i}t_{j}\\
+{\displaystyle\sum\limits_{|I| \geq 3}}
\left(  \phi_I \lrcorner \Omega^{2,0}+ \suml_{J+K=I}
\phi_J \lrcorner \phi_K \lrcorner \Omega^{2,0}\right) .
\end{array}
\label{hk-3}%
\end{equation}
We claim that
\begin{claim}\label{claim1}
Suppose, for any multi-index $J, K$ with $|J| \geq 2$, the harmonic projection
$\mathbb{H}\left(  \phi_{J}\lrcorner\phi_{K}\lrcorner\Omega
^{2,0}\right)  = c \cdot \overline{\Omega^{2,0}}$ for some constant $c$, then we have
\begin{equation}%
\begin{array}
[c]{c}%
\mathbb{H}\left(   \phi_{J}\lrcorner\phi_{K}\lrcorner
\Omega^{2,0}  \wedge \wedge^n \Omega^{2,0}
 \wedge \wedge^{n-1} \bar{\Omega^{2,0}})\right)
=c \cdot \wedge^n \Omega^{2,0}\wedge \wedge^n \overline{\Omega^{2,0}}.
\end{array}
\label{hk-6}%
\end{equation}
\end{claim}
\bproof
By the Hodge decomposition, we have
\begin{eqnarray*}
\phi_J \lrcorner \phi_K \lrcorner \Omega^{2,0}= c \cdot \overline{\Omega^{2,0}}
+ d \alpha_1 + d^* \alpha_2.
\end{eqnarray*}

\noindent The proof bases on the following facts:
\begin{eqnarray}\label{covariant}
(\bar{\partial} \phi)_{A_p, \bar{\alpha B}_q} = \sum_{\alpha} \nabla_{\bar{\alpha}} \phi_{A_p, \bar{B}_q},~~~~
(\bar{\partial^*} \phi)_{A_p, \bar{B}_q}= (-1)^{p+1} \sum_{\alpha, \beta}
 g^{\bar{\beta} \alpha} \nabla_{\alpha} \phi_{A_p, \bar{\beta}\bar{B}_q},
\end{eqnarray}
and their conjugate, which can be found in \cite{Morrow-Kodaira71}.
From the formula
\begin{eqnarray}\label{wedge}
\nabla(\alpha \wedge \beta)= \nabla \alpha \wedge \beta + \alpha \wedge \nabla \beta,
\end{eqnarray}
and $\nabla \Omega^{2,0}=0$, which comes from the Bochner principal \ref{bochner}, we have
\begin{eqnarray}
\nabla (\alpha \wedge \wedge^n \Omega^{2,0} \wedge \wedge^{n-1} \overline{\Omega^{2,0}})
= \nabla \alpha \wedge \wedge^n \Omega^{2,0} \wedge \wedge^{n-1} \overline{\Omega^{2,0}}.
\end{eqnarray}
From the formulas \ref{covariant} and their conjugate, we have
\begin{eqnarray*}
d (\alpha_1 \wedge \wedge^n \Omega^{2,0} \wedge \wedge^{n-1} \overline{\Omega^{2,0}})
= d \alpha_1 \wedge \wedge^n \Omega^{2,0} \wedge \wedge^{n-1} \overline{\Omega^{2,0}}, \\
d^* (\alpha_2 \wedge \wedge^n \Omega^{2,0} \wedge \wedge^{n-1} \overline{\Omega^{2,0}})
= d^* \alpha_2 \wedge \wedge^n \Omega^{2,0} \wedge \wedge^{n-1} \overline{\Omega^{2,0}}.
\end{eqnarray*}
Thus we have
\begin{eqnarray*}
&&\phi_J \lrcorner \phi_K \lrcorner \Omega^{2,0} \wedge \wedge^n \Omega^{2,0} \wedge \wedge^{n-1} \overline{\Omega^{2,0}}\\
&=& c \cdot \wedge^n \Omega^{2,0} \wedge \wedge^n \overline{\Omega^{2,0}} + d(\alpha_1 \wedge
\wedge^n \Omega^{2,0} \wedge \wedge^{n-1} \overline{\Omega^{2,0}}) + d^*(\alpha_2 \wedge
\wedge^n \Omega^{2,0} \wedge \wedge^{n-1} \overline{\Omega^{2,0}}),
\end{eqnarray*}
which implies our claim.
\eproof
\noindent Direct computations show that
\begin{equation}
\left(  \phi_{J}\lrcorner\phi_{K}\lrcorner\Omega
^{2,0}\right)  \wedge \wedge^n \Omega^{2,0}=\left(  \phi_{J}
\lrcorner\Omega^{2,0}\right)  \wedge\left(  \phi_K
\lrcorner\Omega^{2,0}\right) \wedge \wedge^{n-1} \Omega^{2,0} \label{hk-5}%
\end{equation}
as a smooth $(2n, 2)$-form.  Therefore, for any multi-index $J, K$ with $|J| \geq 2$, we have
\begin{eqnarray*}
&& \int_X (\phi_{J}\lrcorner\phi_{K}\lrcorner
\Omega^{2,0})  \wedge\Omega^{2,0}
\wedge \wedge^{n-1 }\Omega^{2,0} \wedge \wedge^{n-1}\bar{\Omega^{2,0}}\\
&=& \int_X (\phi_{J}%
\lrcorner\Omega^{2,0})  \wedge (\phi_{K}%
\lrcorner\Omega^{2,0}) \wedge
\wedge^{n-1}\Omega^{2,0}\wedge \wedge^{n-1}\overline{\Omega^{2,0}}\\
&=& \frac{1}{n} \int_X (\phi_{J}
\lrcorner \wedge^n \Omega^{2,0})  \wedge (\phi_{K}
\lrcorner\Omega^{2,0}) \wedge
 \wedge^{n-1}\overline{\Omega^{2,0}}\\
 &=& \frac{1}{n} \int_X \partial \psi_{J} \wedge
 (\phi_{K}
\lrcorner\Omega^{2,0}) \wedge
 \wedge^{n-1}\overline{\Omega^{2,0}}\\
 &=& \frac{1}{n}\int_X \partial (\psi_{J} \wedge
 (\phi_{K}
\lrcorner\Omega^{2,0}) \wedge
 \wedge^{n-1}\overline{\Omega^{2,0}})~~~~~~~~(\text{as}~~
 \phi_K \lrcorner \Omega^{2,0} \wedge \wedge^{n-1} \overline{\Omega^{2,0}}~~\text{is d-closed}).\\
 &=& 0.
\end{eqnarray*}

\noindent On the other hand, by Claim \ref{claim1} and the Stokes' formula, we have
\begin{eqnarray*}
0=\int_X (\phi_{J}\lrcorner\phi_{K}\lrcorner
\Omega^{2,0})  \wedge\Omega^{2,0}
\wedge \wedge^{n-1 }\Omega^{2,0} \wedge \wedge^{n-1}\bar{\Omega^{2,0}}
= c \cdot \int_X \wedge^n \Omega^{2,0} \wedge \wedge^n \overline{\Omega^{2,0}}
\end{eqnarray*}

\noindent So we have $c=0$
i.e., $\mathbb{H}\left(  \left(  \phi_{J}\lrcorner\phi_{K}%
\lrcorner\Omega^{2,0}\right)  \right)  =0$ for any multiple-index
$J, K$ with $|J| \geq 2$.

\noindent Next, for the $(1,1)$-form $\phi_I \lrcorner \Omega^{2,0}$, we claim that
\begin{claim}
\begin{enumerate}
\item $\phi_i \lrcorner \Omega^{2,0}$ is harmonic for $1 \leq i \leq N$.
\item For any multi-index $I$ with $|I| \geq 2$,
$\phi_I \lrcorner \Omega^{2,0}$~ is $\partial$-exact,
which implies that $$\mathbb{H}(\phi_I \lrcorner \Omega^{2,0})=0.$$
\end{enumerate}
\end{claim}
\bproof
1. As $\phi_i$ is harmonic and $\Omega^{2,0}$ is parallel with
respect to Levi-Civita connection. So, from the formula \ref{covariant} and
\begin{eqnarray*}
\nabla(\phi_i \lrcorner \Omega^{2,0})= \nabla \phi_i \lrcorner \Omega^{2,0}
+ \phi_i \lrcorner \nabla \Omega^{2,0},
\end{eqnarray*}
we have
\begin{eqnarray*}
d(\phi_i \lrcorner \Omega^{2,0})=d\phi_i \lrcorner \Omega^{2,0}=0,\\
d^*(\phi_i \lrcorner \Omega^{2,0}) = d^* \phi_i \lrcorner \Omega^{2,0}=0,
\end{eqnarray*}
i.e., $\phi_i \lrcorner \Omega^{2,0}$ is harmonic for $1 \leq i \leq N$.

\noindent 2. As $\Omega^{2,0}$ is a nowhere vanishing holomorphic $(2,0)$-form, so we can define
$\Omega^{* 2,0} \in \Gamma(X, \wedge^2 \Theta)$ by requiring
$\langle \Omega^{2,0}, \Omega^{* 2,0} \rangle=1$ pointwise on $X$.
Actually, in a coordinate chart $\{z_1, z_2, \cdots, z_{2n}\}$, we can assume
\begin{eqnarray*}
 \Omega^{2,0} &=& \sum_{i,j=1}^{2n} a_{ij} dz_i \wedge dz_j ~~~\text{with}~~~
 a_{ij}=-a_{ji} \\
 \Omega^{*2, 0} &=& \sum_{i,j=1}^{2n} b_{ij} \frac{\partial}{\partial z_i} \wedge
 \frac{\partial}{\partial z_j}~~~ \text{with}~~~ b_{ij}=-b_{ji}.
\end{eqnarray*}
Then, if we define matrices  $A= (a_{ij})$ and $B = (b_{ij})$, then $\det(A)\neq 0$ and
\begin{eqnarray*}
\langle \Omega^{2,0}, \Omega^{*2,0} \rangle = \sum_{i,j=1}^{2n} a_{ij} b_{ij}
= \tr(A B^{T})=1,
\end{eqnarray*}
so, locally, the matrix $B$ can be defined by
\begin{eqnarray*}
B = \frac{1}{2n} (A^{-1})^{T}.
\end{eqnarray*}
And it is easy to check that this definition is independent of the local coordinates and
$\nabla \Omega^{*2,0}=0$ by the Bochner's principle \ref{bochner}. Then, by Theorem \ref{BTT}, we have
\begin{eqnarray*}
\phi_{I}
\lrcorner \wedge^n \Omega^{2,0}=\p_{X} \psi_{I}, ~~~~~ |I| \geq 2,
\end{eqnarray*}
which implies that
\begin{eqnarray*}
\phi_I \lrcorner \Omega^{2,0}= \wedge^{n-1} \Omega^{* 2,0}
 \lrcorner (\phi_{I}\lrcorner \wedge^n \Omega^{2,0})
= \wedge^{n-1} \Omega^{* 2,0} \lrcorner \partial \psi_I
= \partial (\wedge^{n-1} \Omega^{* 2,0} \lrcorner \psi_I),
\end{eqnarray*}
by formulas \ref{covariant} and \ref{wedge}.
\eproof

\noindent Thus, the harmonic projection of the family of $(2,0)$-forms $e^{\Phi(t)}%
\lrcorner\Omega^{2,0}$ is given by
\begin{eqnarray*}
&&\mathbb{H}\left(  e^{\Phi(t)}\lrcorner\Omega^{2,0}
\right) = \Omega^{2,0}+
{\displaystyle\sum\limits_{i=1}^{N}}
 \phi_{i}\lrcorner\Omega^{2,0}  t_{i}+\frac{1}{2}%
{\displaystyle\sum\limits_{i=1}^{N}}
\left(   \phi_{i}\lrcorner\phi_{j}\lrcorner\Omega
^{2,0} \right)  t_{i}t_{j}.
\end{eqnarray*}
\label{hk-7}%
Theorem \ref{2-0} is proved.
\end{proof}

\begin{corollary}\label{c-0}
Fix $p \in \mathcal{T}$, let $(X,L)$ be the corresponding polarized Hyperk\"ahler manifold
and $\Omega^{2,0}$ be a non-zero holomorphic non-degenerate $(2,0)$-form
over $X$ and $\left\{\phi_{i}\right\}_{i=1}^N  $ be an orthonormal basis $\mathbb{H}^{0,1}_L\left(X,
T^{1,0}X\right)$ with respect to the Calabi-Yau metric. Then, in a neighborhood $U$ of $p$,  there exists
a canonical family of holomorphic $(2n,0)$-forms,

$$ \Omega^{c}(t)  = e^{\Phi(t)} \lrcorner \wedge^n \Omega^{2,0} $$
 which defines a canonical family of $(2n,0)$-classes
\begin{equation}
\left[  \Omega^{c}(t)\right]  =\left[  \wedge^n \Omega^{2,0}\right]  +
{\displaystyle\sum\limits_{i=1}^{N}}
\left[  \phi_{i}\lrcorner  \wedge^n \Omega^{2,0}\right] t_{i}+\frac
{1}{k!}
{\displaystyle\sum\limits_{k=1}^{2n}}
\left(
{\displaystyle\sum\limits_{1\leq i_{1}\leq...\leq i_{k}\leq N}}
\left[  \phi_{i_{1}}\lrcorner...\lrcorner\phi_{i_{k}}\lrcorner\Omega
\right]  t_{i_1} t_{i_2} \cdots t_{i_k} \right). \label{n-0}
\end{equation}
In particular, the expansions imply that the Teichm\"uller space $\mathcal{T}$
is a locally Hermitian symmetric space.
\end{corollary}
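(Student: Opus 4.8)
The plan is to compute the class of the holomorphic $(2n,0)$-family directly and to reduce everything to the $(2,0)$-computation already carried out in Theorem \ref{2-0}. Since $\Omega^{2,0}$ is non-degenerate, $\wedge^n\Omega^{2,0}$ is a nowhere vanishing holomorphic $(2n,0)$-form, so Theorem \ref{canonicalform} (applied with this top form in place of $\Omega$) shows that $\Omega^c(t)=e^{\Phi(t)}\lrcorner\wedge^n\Omega^{2,0}$ is a genuine holomorphic, hence $d$-closed, $(2n,0)$-form on $X_t$; its class therefore equals its harmonic part, $[\Omega^c(t)]=[\mathbb{H}(\Omega^c(t))]$. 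The goal is to prove that this class is the finite sum \ref{n-0}, that is, that $\mathbb{H}(\Omega^c(t))=\mathbb{H}(\Omega^{cc}(t))$ with $\Omega^{cc}(t)=e^{\sum_i t_i\phi_i}\lrcorner\wedge^n\Omega^{2,0}$, which is precisely the assertion that there is no strong quantum correction.

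The structural observation I would use is that $\Phi(t)\lrcorner$ is a degree-zero (even) derivation of the exterior algebra, so $e^{\Phi(t)}\lrcorner=\exp(\Phi(t)\lrcorner)$ is an algebra homomorphism and
\[
\Omega^c(t)=e^{\Phi(t)}\lrcorner\wedge^n\Omega^{2,0}=\bigl(e^{\Phi(t)}\lrcorner\Omega^{2,0}\bigr)^{\wedge n}.
\]
Expanding each iterated contraction by the Leibniz rule distributes the contracting Beltrami differentials over the $n$ copies of $\Omega^{2,0}$, and since $\Omega^{2,0}$ is a $2$-form each copy absorbs at most two of them; in particular terms with more than $2n$ contractions vanish. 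Thus every surviving term of $\Omega^c(t)$ is a wedge of factors of the three types $\Omega^{2,0}$, $\phi_I\lrcorner\Omega^{2,0}$ and $\phi_I\lrcorner\phi_J\lrcorner\Omega^{2,0}$.

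To pin down the class I would show that every such wedge containing a factor built from a higher-order Beltrami differential $\phi_I$ with $|I|\geq2$ has vanishing harmonic projection. Two facts from the proof of Theorem \ref{2-0} drive this: $\phi_I\lrcorner\Omega^{2,0}$ is $\partial$-exact for $|I|\geq2$, and $\mathbb{H}(\phi_I\lrcorner\phi_K\lrcorner\Omega^{2,0})=0$ whenever one of the two multi-indices has length $\geq 2$. Because $\Omega^{2,0}$ and its dual bivector $\Omega^{*2,0}$ are parallel by Bochner's principle (Proposition \ref{bochner}), the $\partial$-exactness of a bad factor, or the vanishing of its harmonic part, propagates to the entire wedge after integration by parts, exactly as in the proof of Claim \ref{claim1} through the covariant-derivative formulas \ref{covariant} and Stokes' theorem. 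Consequently only the factors $\phi_i\lrcorner\Omega^{2,0}$ $(1\leq i\leq N)$ and $\phi_i\lrcorner\phi_j\lrcorner\Omega^{2,0}$ survive in harmonic projection, giving $\mathbb{H}(\Omega^c(t))=\mathbb{H}(\Omega^{cc}(t))$, which is the expansion \ref{n-0}. Since $\sum_i t_i\phi_i$ is a finite sum of first-order terms and at most $2n$ contractions survive, this class is a polynomial in $t$ of degree at most $2n$, and the Hyperk\"{a}hler analog of Corollary \ref{corfinite} (the curvature computations of Section \ref{qc and wpmetric} being valid here) then yields $\nabla^m R=0$ for large $m$, whence $\nabla R=0$ by Proposition \ref{cur}, so $\mathcal{T}$ is locally Hermitian symmetric.

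I expect the main obstacle to be the harmonic-projection vanishing of these wedge terms: a single term of $\Xi(t)=\Omega^c(t)-\Omega^{cc}(t)$ is a wedge of several factors, some of which are not closed, so one cannot simply factor the operator $\mathbb{H}$ through the wedge. The point is that the bad factor is either $\partial$-exact or has zero harmonic part, while the remaining factors are built from the parallel forms $\Omega^{2,0}$, $\overline{\Omega^{2,0}}$ and the harmonic $\phi_i\lrcorner\Omega^{2,0}$; it is the parallelism supplied by Bochner's principle, together with integration by parts as in Claim \ref{claim1}, that carries the vanishing from the single bad factor to the whole product. Making this propagation precise for arbitrary multi-indices, rather than only for the third-order weak term $\sum_{i,j,k}t_it_jt_k[\phi_i\lrcorner\phi_{jk}\lrcorner\wedge^n\Omega^{2,0}]$, is the technical heart of the argument.
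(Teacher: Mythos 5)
Your setup is fine: the derivation identity $e^{\Phi(t)}\lrcorner\wedge^n\Omega^{2,0}=\bigl(e^{\Phi(t)}\lrcorner\Omega^{2,0}\bigr)^{\wedge n}$ is correct, and you have identified the right goal, namely $\mathbb{H}(\Omega^c(t))=\mathbb{H}(\Omega^{cc}(t))$. But the step you yourself flag as the technical heart --- propagating the vanishing of the harmonic projection of a ``bad'' factor through the wedge ``exactly as in Claim \ref{claim1}'' --- is a genuine gap, not a routine extension. Claim \ref{claim1} works only because the complementary factor there, $\wedge^n\Omega^{2,0}\wedge\wedge^{n-1}\overline{\Omega^{2,0}}$, is \emph{parallel}, so wedging with it commutes with both $d$ and $d^*$ and hence with the Hodge decomposition. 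In your cross terms the complementary factors are $\phi_i\lrcorner\Omega^{2,0}$ and $\phi_i\lrcorner\phi_j\lrcorner\Omega^{2,0}$: the former are harmonic but \emph{not} parallel (Bochner's principle \ref{bochner} applies to holomorphic tensors; the primitive harmonic $(1,1)$-forms $\eta_i$ are not holomorphic, and on a Hyperk\"ahler manifold the parallel $2$-forms are exhausted by the span of $\omega$, $\Omega^{2,0}$, $\overline{\Omega^{2,0}}$), and the latter are not even closed. Concretely: a bad factor $\phi_J\lrcorner\phi_K\lrcorner\Omega^{2,0}$ with $|J|\geq 2$ is only known to be of the form $d\alpha_1+d^*\alpha_2$, and $\mathbb{H}(d^*\alpha_2\wedge\eta)$ need not vanish when $\eta$ is closed but non-parallel, since $d^*\alpha_2\wedge\eta\neq d^*(\alpha_2\wedge\eta)$ in general; similarly, a $\partial$-exact bad factor $\partial\gamma$ wedged against a term containing the non-closed factor $\phi_i\lrcorner\phi_j\lrcorner\Omega^{2,0}$ produces, via the Leibniz rule, the uncontrolled term $\gamma\wedge\partial(\cdots)$. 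So the term-by-term vanishing your argument requires is unproved as stated.

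The paper bypasses this entirely with a short rigidity argument that you should adopt: by Theorem \ref{canonicalform} and Theorem \ref{2-0}, both $\mathbb{H}\bigl(e^{\Phi(t)}\lrcorner\wedge^n\Omega^{2,0}\bigr)$ and $\mathbb{H}\bigl[\wedge^n\mathbb{H}\bigl(e^{\Phi(t)}\lrcorner\Omega^{2,0}\bigr)\bigr]$ lie in $\mathbb{H}^{2n,0}(X_t)$, which is one-dimensional, so they are proportional by some $\lambda\in\mathbb{C}$; projecting both onto $\mathbb{H}^{2n,0}(X)$ gives $\wedge^n\Omega^{2,0}$ in each case, forcing $\lambda=1$. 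Expanding $\wedge^n$ of the quadratic polynomial from Theorem \ref{2-0} then yields the expansion \ref{n-0} with no analysis whatsoever of the cross terms that stall your approach. Your final step --- polynomiality of the class in $t$ plus Corollary \ref{corfinite}, Proposition \ref{cur} --- matches the paper and is fine once the expansion is established.
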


\begin{proof} From the Proposition \ref{canonicalform}, we have the harmonic projection
\begin{eqnarray*}
\mathbb{H}(e^{\Phi(t)} \lrcorner \wedge^n \Omega^{2,0}) \in \mathbb{H}^{2n,0}(X_t).
\end{eqnarray*}
And from Theorem \ref{2-0}, we know that
\begin{eqnarray*}
\mathbb{H}(e^{\Phi(t)}\lrcorner \Omega^{2,0}) \in \mathbb{H}^{2,0} (X_t),
\end{eqnarray*}
therefore, we have
\begin{eqnarray*}
\mathbb{H}[\wedge^n \mathbb{H}(e^{\Phi(t)}\lrcorner \Omega^{2,0})] \in \mathbb{H}^{2n,0}(X_t).
\end{eqnarray*}
Because of $\dim_{\mathbb{C}} \mathbb{H}^{2n,0}(X_t)=1$, there exists $\lambda \in \mathbb{C}$ such that
\begin{eqnarray}
\mathbb{H}(e^{\Phi(t)} \lrcorner \wedge^n \Omega^{2,0})
 = \lambda \mathbb{H}[\wedge^n \mathbb{H}(e^{\Phi(t)}\lrcorner \Omega^{2,0})].
\end{eqnarray}
On the other hand, we have
\begin{eqnarray}
\mbox{Pr}_{\mathbb{H}^{2n,0}(X)} \left(\mathbb{H}(e^{\Phi(t)} \lrcorner \wedge^n \Omega^{2,0})\right)
=\mbox{Pr}_{\mathbb{H}^{2n,0}(X)}\left(\mathbb{H}[\wedge^n \mathbb{H}(e^{\Phi(t)}\lrcorner \Omega^{2,0})]\right)
= \wedge^n \Omega^{2,0}.
\end{eqnarray}
Thus $\lambda=1$, i.e.,
\begin{eqnarray*}
[\Omega^c(t)] &=&
[e^{\Phi(t)}\lrcorner \wedge^n \Omega^{2,0}]
= \left[\wedge^n \left(\Omega^{2,0}+
{\displaystyle\sum\limits_{i=1}^{N}}
 \phi_{i}\lrcorner\Omega^{2,0}  t_{i}+\frac{1}{2}%
{\displaystyle\sum\limits_{i=1}^{N}}
\left(   \phi_{i}\lrcorner\phi_{j}\lrcorner\Omega
^{2,0} \right)  t_{i}t_{j}\right)\right]\\
&=&\left[  \wedge^n  \Omega^{2,0}\right]  +
{\displaystyle\sum\limits_{i=1}^{N}}
\left[  \phi_{i}\lrcorner  \wedge^n \Omega^{2,0}\right] t_{i}+\frac
{1}{k!}
{\displaystyle\sum\limits_{k=1}^{2n}}
\left(
{\displaystyle\sum\limits_{0\leq i_{1}\leq...\leq i_{k}}}
\left[  \phi_{i_{1}}\lrcorner...\lrcorner\phi_{i_{k}}\lrcorner\Omega
\right]  t_{i_1} t_{i_2} \cdots t_{i_k} \right),
\end{eqnarray*}
which is a polynomial in terms of the flat affine coordinate $t=(t_1, \cdots, t_N)$.
Thus, by the Corollary \ref{corfinite}, the Teichm\"uller
space $\mathcal{T}$ of polarized Hyperk\"ahler manifolds is a locally Hermitian symmetric space.
\end{proof}

\subsection{Global Family of (2,0)- and (2n,0)-Classes}\label{global family}

In this section, we will show that the flat affine coordinate $t$
is globally defined over the Teichm\"uller space $\mathcal{T}$,
so do the expansions of the canonical families of $(2,0)$ and $(2n,0)$-classes.

\noindent Fix $p \in \mathcal{T}$, let $(X, L)$ be the corresponding polarized
Hyperk\"aher manifold.
Fix a nowhere vanishing holomorphic $(2,0)$-form $\Omega^{2,0}$ over $X$ and a basis of harmonic form $\eta_1, \cdots, \eta_N
\in \mathbb{H}^{1,1}_{pr}(X)$. Then $[\Omega^{2,0}], [\eta_1], \cdots, [\eta_N], [\overline{\Omega^{2,0}}]$
is a basis of $H^2_{pr}(X, \mathbb{C})$. We normalize this basis such that
$Q([\Omega^{2,0}], [\overline{\Omega^{2,0}}])=-1$ and $Q([\eta_i], [\eta_j])= \delta_{ij}$.
Moreover, we can pick up a harmonic basis of $\mathbb{H}^{0,1}_L(X, T^{1,0}X)$, denote by $\phi_1, \cdots, \phi_N$,
such that $\phi_i \lrcorner \Omega^{2,0}= \eta_i$.

\noindent Using the point $O= \Phi(p)$ as the base point or reference pint, we can parameterize the period domain $D$.
Let $e_i= (0, \cdots, 1, \cdots, 0)$ with $1 \leq i \leq N$ be the standard basis of $\mathbb{C}^N$. Here we view
$e_i$ as a row vector, then we can define
\begin{eqnarray*}
E_i =
\begin{bmatrix}
0 & e_i & 0\\
0 &0 & e_i^T\\
0&0&0
\end{bmatrix} \in \mathfrak{g}^{-1,1}.
\end{eqnarray*}
It follows that $E_i E_j=0$ if $i \neq j$ and
\begin{eqnarray*}
E_i^2= \begin{bmatrix}0&0&1\\0&0&0\\0&0&0  \end{bmatrix},
\end{eqnarray*}
which implies that
\begin{eqnarray*}
\exp(\sum_{i=1}^N t_i E_i)= \begin{bmatrix} 1 & (\tau_1, \cdots, \tau_N) & \frac{1}{2} \sum_{i=1}^N \tau_i^2\\
0 & I_{N \times N} & (\tau_1, \cdots, \tau_N)^T\\
0 & 0 &1 \end{bmatrix}.
\end{eqnarray*}
Let $\beth \subset \mathbb{C}^N$ be the domain enclosed by the real hypersurface
\begin{eqnarray*}
1 - \sum_{i=1}^N |\tau_i|^2 + \frac{1}{4} |\sum_{i=1}^N \tau_i^2|=0.
\end{eqnarray*}
Let $\tau=(\tau_1, \cdots, \tau_N)$ be the standard coordinates on $\mathbb{C}^N$, then the map
$\rho: \beth \rightarrow D$ given by $\rho(t)= \exp(\sum_{i=1}^N \tau_i E_i)$
is a biholormorphic map. This is the Harish-Chandra realization \cite{HC} of the period domain $D$.
Moreover, from the the global Torelli theorem (cf. \cite{V09} and \cite{CGL13, CGL14}) for Hyperk\"ahler
manifolds, we know that the map
\begin{eqnarray}
\rho^{-1} \circ \Phi : \mathcal{T} \rightarrow \beth,
\end{eqnarray}
is an injective map. So the coordinate $\tau= (\tau_1, \cdots, \tau_N)$ are global coordinates on
the Teichm\"uller space $\mathcal{T}$. We call it the Harish-Chandra coordinate.
We know that, in a neighborhood of $p \in \mathcal{T}$, there is another flat
affine coordinate $t$. Actually, these two coordinates coincide,  and we have the following theorem:

\begin{theorem}
Fix $p \in \mathcal{T}$, in a neighborhood $U$ of $p $, the global Harish-Chandra coordinate $\tau$ coincide with the
flat affine coordinate $t$. So the flat affine coordinate $t$ is globally defined and the 
expansions of the canonical families of
cohomology classes \ref{hk-0} and \ref{n-0}, i.e.,
\begin{eqnarray*}
&&[\Omega^{2,0}] +
{\displaystyle\sum\limits_{i=1}^{N}}
 [\phi_{i}\lrcorner\Omega^{2,0}] t_{i}+\frac{1}{2}%
{\displaystyle\sum\limits_{i=1}^{N}}
[  \phi_{i}\lrcorner\phi_{j}\lrcorner\Omega
^{2,0}   ] t_{i}t_{j} \in H^{2,0}(X_t),\\
&&\left[  \wedge^n \Omega^{2,0}\right] +
{\displaystyle\sum\limits_{i=1}^{N}}
\left[  \phi_{i}\lrcorner  \wedge^n \Omega^{2,0}\right] t_{i}\\
&&+\frac{1}{k!}
{\displaystyle\sum\limits_{k=1}^{2n}}
\left({\displaystyle\sum\limits_{1\leq i_{1}\leq...\leq i_{k}\leq N}}
[  \phi_{i_{1}}\lrcorner...\lrcorner\phi_{i_{k}}\lrcorner \wedge^n\Omega^{2,0}
]  t_{i_1} t_{i_2} \cdots t_{i_k} \right) \in H^{2n,0}(X_t).
\end{eqnarray*}
are globally defined over the Teichm\"uller space $\mathcal{T}$.
\begin{proof}
Let $t$ be its flat affine coordinate of $q \in U_p$ where $U_p$ is a small coordinate and $\tau$
be its Harish-Chandra coordinate. We only need to show that $t= \tau$. From the definition of
$\tau$, we know that
\begin{eqnarray*}
\exp(\sum_{i=1}^N \tau_i E_i) \begin{bmatrix} [\Omega^{2,0}]\\
[\eta_1]\\ \vdots \\ [\eta_N]\\ [\overline{\Omega^{2,0}}] \end{bmatrix}
=\begin{bmatrix} 1 & (\tau_1, \cdots, \tau_N) & \frac{1}{2} \sum_{i=1}^N \tau_i^2\\
0 & I_{N \times N} & (\tau_1, \cdots, \tau_N)^T\\
0 & 0 &1 \end{bmatrix} \begin{bmatrix} [\Omega^{2,0}]\\
[\eta_1]\\  \vdots \\ [\eta_N]\\ [\overline{\Omega^{2,0}}] \end{bmatrix},
\end{eqnarray*}
is a basis of $H^2(X_q)$ adapted to the Hodge filtration of $X_q$.
Consider the first element in the basis, we have
\begin{eqnarray*}
B_q (t) =[\Omega^{2,0}]+ \sum_{i=1}^N \tau_i [\eta_i] + \frac{1}{2} \sum_{i=1}^N\tau_i^2 [\overline{\Omega^{2,0}}]
\in H^{2,0}(X_q).
\end{eqnarray*}
On the other hand, from Theorem \ref{2-0}, we know that
\begin{eqnarray*}
[\mathbb{H} ( \Omega^{c;2,0}(t))]  &=& [\Omega^{2,0}]+
{\displaystyle\sum\limits_{i=1}^{N}}
[\phi_{i}\lrcorner\Omega^{2,0}] t_{i}+ \frac{1}{2}%
{\displaystyle\sum\limits_{i=1}^{N}}
\left[   \phi_{i}\lrcorner\phi_{j}\lrcorner\Omega
^{2,0}  \right]  t_{i}t_{j} \\
&=& [\Omega^{2,0}]+ \sum_{i=1}^N [\eta_i]t_i + \frac{1}{2} \sum_{i=1}^N t_i^2 [\bar{\Omega^{2,0}}]
\in H^{2,0}(X_q),
\end{eqnarray*}%
in the flat affine coordinate $t$.  Thus, by the fact that
$H^{2,0}(X_q) \cong \mathbb{C}$, there exists a constant
$\lambda \in \mathbb{C}$ such that
\begin{eqnarray*}
B_q(t) = \lambda [\mathbb{H} ( \Omega^{c;2,0}(t))].
\end{eqnarray*}
Moreover, we have
\begin{eqnarray*}
[\Omega^{2,0}]=\mbox{Pr}_{H^{2,0}(X)}(B_q(t))
= \lambda \mbox{Pr}_{H^{2,0}(X)}([\mathbb{H}(\Omega^{c; 2,0}(t)0])= \lambda [\Omega^{2,0}],
\end{eqnarray*}
so $\lambda =1$, i.e., $B_q(t) = [\mathbb{H}(\Omega^{c;2,0}(t))] $. Thus we know $t_i = \tau_i , ~1 \leq i \leq N$,
as $[\Omega^{2,0}], [\eta_1], \cdots, [\eta_N],$ $ [\overline{\Omega^{2,0}}]$ is a basis of $H^2(X)$.
\end{proof}
\end{theorem}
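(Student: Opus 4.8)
The plan is to compute the period image $\Phi(q)$ of a nearby point $q\in U$ in two independent ways and then to collapse the comparison using the one-dimensionality $h^{2,0}=\dim_{\mathbb C}H^{2,0}(X_q)=1$. Throughout I would work against the fixed adapted reference basis $[\Omega^{2,0}],[\eta_1],\dots,[\eta_N],[\overline{\Omega^{2,0}}]$ of $H^2_{pr}(X,\mathbb C)$ normalized as above, so that the $(2,0)$, $(1,1)$ and $(0,2)$ components of any class in $H^2(X_q)$ are simply the coefficients of $[\Omega^{2,0}]$, of the $[\eta_i]$, and of $[\overline{\Omega^{2,0}}]$ respectively.

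First I would record the Harish-Chandra description. Since $\Phi(q)\in D$ and $\rho\colon\beth\to D$ is a biholomorphism, there is a unique $\tau=(\tau_1,\dots,\tau_N)\in\beth$ with $\Phi(q)=\rho(\tau)=\exp\bigl(\sum_i\tau_iE_i\bigr)$. Reading off the first row of the explicit matrix for $\exp(\sum_i\tau_iE_i)$ computed above, the $F^2=H^{2,0}(X_q)$ generator is
$$B_q=[\Omega^{2,0}]+\sum_{i=1}^N\tau_i[\eta_i]+\tfrac12\Bigl(\sum_{i=1}^N\tau_i^2\Bigr)[\overline{\Omega^{2,0}}],$$
whose $(2,0)$-part is $[\Omega^{2,0}]$ and whose $(1,1)$-part is $\sum_i\tau_i[\eta_i]$.

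Next I would use the local canonical family. By Theorem \ref{2-0}, the harmonic projection $\mathbb H(\Omega^{c;2,0}(t))$ of $e^{\Phi(t)}\lrcorner\Omega^{2,0}$ generates $H^{2,0}(X_q)$ and, using $\phi_i\lrcorner\Omega^{2,0}=\eta_i$, expands as
$$[\mathbb H(\Omega^{c;2,0}(t))]=[\Omega^{2,0}]+\sum_{i=1}^N t_i[\eta_i]+\tfrac12\sum_{i,j=1}^N[\phi_i\lrcorner\phi_j\lrcorner\Omega^{2,0}]\,t_it_j,$$
whose $(2,0)$-part is again $[\Omega^{2,0}]$ and whose $(1,1)$-part is the linear form $\sum_i t_i[\eta_i]$; the quadratic term has pure type $(0,2)$ and plays no role in what follows. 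Since $H^{2,0}(X_q)\cong\mathbb C$, the two generators must be proportional, say $B_q=\lambda\,[\mathbb H(\Omega^{c;2,0}(t))]$. Projecting to $H^{2,0}(X)$ compares the coefficients of $[\Omega^{2,0}]$ and forces $\lambda=1$; projecting the resulting equality to $H^{1,1}_{pr}(X)$ compares the coefficients of the $[\eta_i]$ and yields $\tau_i=t_i$ for every $i$, so $t=\tau$ on $U$. Because $\tau=\rho^{-1}\circ\Phi$ is a globally defined coordinate on $\mathcal T$ by the Harish-Chandra realization together with the global Torelli theorem, the flat affine coordinate $t$ thereby extends globally, and the polynomial expansions \eqref{hk-0} and \eqref{n-0} are defined over all of $\mathcal T$.

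The step I expect to require the most care is confirming that $B_q$ and $[\mathbb H(\Omega^{c;2,0}(t))]$ really lie in the \emph{same} line $H^{2,0}(X_q)$. On one side $B_q$ is, by construction of $\rho$ and the adapted-basis convention, the $F^2$-generator of the Hodge filtration at $\rho(\tau)=\Phi(q)$; on the other side $\mathbb H(\Omega^{c;2,0}(t))$ is the harmonic $(2,0)$-representative on $X_t$ furnished by Theorem \ref{2-0}. One must invoke the defining identity $\Phi(q)=\rho(\tau)$ to see that these are the $(2,0)$-pieces of one and the same Hodge structure on $X_q$. Once that identification is secured, the one-dimensionality $h^{2,0}=1$ and the two coefficient comparisons finish the argument with no further estimates, every class in sight being polynomial in $t$ and hence automatically truncating.
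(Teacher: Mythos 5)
Your proposal is correct and follows essentially the same route as the paper's own proof: read off the $F^2$-generator $B_q$ from the Harish-Chandra matrix $\exp(\sum_i\tau_iE_i)$, compare it with the expansion of $[\mathbb{H}(\Omega^{c;2,0}(t))]$ from Theorem \ref{2-0}, use $\dim_{\mathbb{C}}H^{2,0}(X_q)=1$ plus the projection to $H^{2,0}(X)$ to force $\lambda=1$, and match coefficients in the reference basis to get $t=\tau$, with globality then coming from the Harish-Chandra realization and the global Torelli theorem. Your explicit comparison of the $(1,1)$-parts is just a slightly more itemized version of the paper's coefficient comparison in the basis $[\Omega^{2,0}],[\eta_1],\dots,[\eta_N],[\overline{\Omega^{2,0}}]$, so there is no substantive difference.
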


\end{document}